\let\pa\partial
\let\na\nabla
\let\eps\varepsilon
\newcommand{\N}{{\mathbb N}}
\newcommand{\R}{{\mathbb R}}
\newcommand{\diver}{\operatorname{div}}
\newcommand{\ran}{\operatorname{ran}}
\renewcommand{\H}{{\mathcal H}}
\newcommand{\E}{{\mathcal E}}
\newcommand{\evmin}{\lambda_m}
\newcommand{\evmax}{\lambda_M}
\newtheorem{theorem}{Theorem}
\newtheorem{lemma}[theorem]{Lemma}
\begin{document}

\title[Maxwell--Stefan--Cahn--Hilliard systems]{Existence and weak-strong uniqueness
for Maxwell--Stefan--Cahn--Hilliard systems} 

\author[X. Huo]{Xiaokai Huo}
\address{Institute of Analysis and Scientific Computing, Technische Universit\"at 
Wien, Wiedner Hauptstra\ss e 8--10, 1040 Wien, Austria}
\email{xiaokai.huo@tuwien.ac.at} 

\author[A. J\"ungel]{Ansgar J\"ungel}
\address{Institute of Analysis and Scientific Computing, Technische Universit\"at 
Wien, Wiedner Hauptstra\ss e 8--10, 1040 Wien, Austria}
\email{juengel@tuwien.ac.at} 

\author[A. Tzavaras]{Athanasios E. Tzavaras}
\address{Computer, Electrical and Mathematical Science and Engineering Division,
King Abdullah University of Science and Technology (KAUST),
Thuwal 23955-6900, Saudi Arabia}
\email{athanasios.tzavaras@kaust.edu.sa}
\date{\today}

\thanks{XH and AJ acknowledge partial support from   
the Austrian Science Fund (FWF), grants P33010, W1245, and F65.
AET acknowledges support from the King Abdullah University of Science and Technology (KAUST).
This work has received funding from the European 
Research Council (ERC) under the European Union's Horizon 2020 research and 
innovation programme, ERC Advanced Grant no.~101018153.} 

\begin{abstract}
A Maxwell--Stefan system for fluid mixtures with driving forces depending on
Cahn--Hilliard-type chemical potentials is analyzed. The corresponding parabolic
cross-diffusion equations contain fourth-order derivatives and are considered
in a bounded domain with no-flux boundary conditions. The main difficulty of
the analysis is the degeneracy of the diffusion matrix, which is overcome by
proving the positive definiteness of the matrix on a subspace and using the
Bott--Duffin matrix inverse. The global existence of weak solutions and a 
weak-strong uniqueness property are shown by a careful combination of 
(relative) energy and entropy estimates, yielding $H^2(\Omega)$ bounds for
the densities, which cannot be obtained from the energy or entropy inequalities alone.
\end{abstract}

\keywords{Cross-diffusion systems, global existence, weak-strong uniqueness, 
relative entropy, relative free energy, parabolic fourth-order equations, 
Maxwell--Stefan equations, Cahn--Hilliard equations.}  
 
\subjclass[2000]{35A02, 35G20, 35G31, 35K51, 35K55, 35Q35.}

\maketitle


\section{Introduction}

The evolution of fluid mixtures is important in many scientific fields like
biology and nanotechnology to understand the 
diffusion-driven transport of the species. The transport can be modeled 
by the Maxwell--Stefan equations \cite{Max66,Ste71}, which
consist of the mass balance equations and the relations between the driving forces
and the fluxes. The driving forces involve the chemical potentials of the
species, which in turn are determined by the (free) energy.
When the fluid is immiscible, the energy can be assumed to consist 
of the thermodynamic entropy and the phase separation energy, 
given by a density gradient \cite{CaHi58}. The gradient energetically penalizes
the formation of an interface and restrains the segregation.
This leads to a system of cross-diffusion equations with fourth-order derivatives. 
The aim of this paper is to provide a
global existence and weak-strong uniqueness analysis for the multicomponent
Maxwell--Stefan--Cahn--Hilliard system.

\subsection{Model equations and state of the art}

The equations for the partial densities $c_i$ and partial velocities $u_i$
are given by
\begin{align}
  \pa_t c_i + \diver(c_iu_i) &= 0,   \quad i = 1,\ldots, n,  \label{1.eq1} \\
 \label{1.eq2}
  c_i\na\mu_i - \frac{c_i}{\sum_{k=1}^n c_k}\sum_{j=1}^n c_j\na\mu_j
	&= -\sum_{j=1}^n K_{ij}(\bm{c})c_ju_j, \\
  \sum_{j=1}^n c_ju_j &= 0, \label{1.eq3}
\end{align}
supplemented by the initial and boundary conditions
\begin{equation}\label{1.bic}
  \bm{c}(\cdot,0)=\bm{c}^0\quad\mbox{in }\Omega, \quad
	c_iu_i\cdot\nu = \na c_i\cdot\nu = 0\quad\mbox{on }\pa\Omega,\ t>0,\ i=1,\ldots,n,
\end{equation}
where $\Omega\subset\R^d$ ($d=1,2,3$) is a bounded domain, $\nu$ is the
exterior unit normal vector on the boundary $\pa\Omega$, $\bm{c}=(c_1,\ldots,c_n)$
is the density vector, and $K_{ij}(\bm{c})$ are the friction coefficients.
The left-hand side of \eqref{1.eq2} can be interpreted as the driving forces
of the thermodynamic system, and the right-hand side is the sum of the friction forces.
The chemical potentials 
\begin{equation}\label{1.mu}
  \mu_i=\frac{\delta\E}{\delta c_i} = \log c_i-\Delta c_i, \quad i=1,\ldots,n,
\end{equation}
are the variational derivatives of the (free) energy
\begin{equation}\label{1.HE}
  \E(\bm{c}) = \H(\bm{c}) + \frac{1}{2}\sum_{i=1}^n\int_\Omega|\na c_i|^2 dx, \quad
  \H(\bm{c}) = \sum_{i=1}^n\int_\Omega\big(c_i(\log c_i-1)+1\big)dx,
\end{equation}
and $\H(\bm{c})$ is the thermodynamic entropy. We assume that 
$\sum_{i=1}^n K_{ij}(\bm{c})=0$ for $j=1,\ldots,n$, meaning that the
linear system in $\na\mu_j$ is invertible only on a subspace, and that
$\sum_{i=1}^n c_i^0=1$ in $\Omega$, which implies that $\sum_{i=1}^n c_i(t)=1$
in $\Omega$ for all time $t>0$. This means that the mixture is saturated
and $c_i$ can be interpreted as volume fraction.
For simplicity, we have normalized all physical constants. 

Model \eqref{1.eq1}--\eqref{1.mu} has been derived rigorously in \cite{HJT19} 
in the high-friction
limit from a multicomponent Euler--Korteweg system for a general convex energy
functional depending on $\bm{c}$ and $\na\bm{c}$. A thermodynamics-based derivation
can be found in \cite{MiSc09}. When the energy equals
$\E(\bm{c})=\H(\bm{c})$, the model reduces to the classical Maxwell--Stefan
equations, analyzed first in \cite{Bot11,GoMa98,HMPW17} for local-in-time smooth
solutions and later in \cite{JuSt13} for global-in-time weak solutions.
In the single-species case, model \eqref{1.eq1}--\eqref{1.mu} becomes the
fourth-order Cahn--Hilliard equation with potential $\phi(c)=c(\log c-1)$,
which was analyzed in, e.g., \cite{ElGa96,Jin92}. Only few works are concerned
with the multi-species situation, and all of them require additional conditions. 
The mobility matrix in \cite{BoLa06,MaZi17} is assumed to be diagonal and that one in
\cite{KRS21} has constant entries, while the
works \cite{EMP21,ElGa97} suppose a particular (but nondiagonal)
structure of the mobility matrix. We also mention the works
\cite{BaEh18,BBEP20} on related models with free energies of the type $\H$.

The proof of the uniqueness of solutions to cross-diffusion or fourth-order 
systems is quite delicate due to the lack of a maximum principle and regularity 
of the solutions. 
The uniqueness of strong solutions to Maxwell--Stefan systems has been shown in
\cite{HMPW17,HuSa18}, and uniqueness results for weak solutions in a very 
special case can be found in \cite{ChJu18}. A weak-strong uniqueness result 
for Maxwell--Stefan systems was proved in \cite{HJT21}. 
Concerning uniqueness results for fourth-order equations, 
we refer to \cite{CGPS13} for single-species Cahn--Hilliard equations, 
\cite{Joh15} for single-species thin-film equations, 
and \cite{F13} for the quantum drift-diffusion equations. 
Up to our knowledge, there are no uniqueness results for multicomponent
Cahn--Hilliard systems. 
In this paper, we analyze these equations in a general setting for the first time.

\subsection{Key ideas of the analysis}

Before stating the main results, we explain the mathematical ideas needed
to analyze model \eqref{1.eq1}--\eqref{1.mu}. First, we rewrite
\eqref{1.eq2} 
by introducing the matrix $D(\bm{c})\in\R^{n\times n}$ with
entries 
$$
  D_{ij}(\bm{c}) = \frac{1}{\sqrt{c_i}} K_{ij}(\bm{c})\sqrt{c_j}
$$
in the unknowns $(\sqrt{c_1}u_1,\ldots,\sqrt{c_n}u_n)$:
\begin{equation}\label{1.D}
\begin{aligned}
  \sqrt{c_i}\na\mu_i - \frac{\sqrt{c_i}}{\sum_{k=1}^n c_k}\sum_{j=1}^n c_j\na\mu_j
	&= -\sum_{j=1}^n D_{ij}(\bm{c})\sqrt{c_j}u_j, \\
	\sum_{i=1}^n\sqrt{c_i}\big(\sqrt{c_i}u_i\big) &= 0.
\end{aligned}
\end{equation}
We show in Lemma \ref{lem.Dz} that this linear system has a unique solution
in the space $L(\bm{c}):=\{\bm{z}\in\R^n:\sum_{i=1}^n\sqrt{c_i}z_i=0\}$,
and the solution reads as
$$
  \sqrt{c_i}u_i = -\sum_{j=1}^n D_{ij}^{BD}(\bm{c})\sqrt{c_j}\na\mu_j,
$$
where $D^{BD}(\bm{c})$ is the so-called Bott--Duffin matrix inverse; see
Lemmas \ref{lem.Dz} and \ref{lem.DB} for the definition and some properties. Then,
defining the matrix $B(\bm{c})\in\R^{n\times n}$ with elements
\begin{equation}\label{1.B}
  B_{ij}(\bm{c}) = \sqrt{c_i}D_{ij}^{BD}(\bm{c})\sqrt{c_j}, \quad i,j=1,\ldots,n,
\end{equation}
system \eqref{1.eq1}--\eqref{1.eq2} can be formulated as 
(see Section \ref{sec.BD} for details)
$$
  \pa_t c_i = \diver\sum_{j=1}^n B_{ij}(\bm{c})\na\mu_j, \quad i=1,\ldots,n.
$$
The matrix $B(\bm{c})$ is often called Onsager or mobility matrix in the literature.
The major difficulty of the analysis consists in the fact 
that the matrix $B(\bm{c})$ is singular
and degenerates when $c_i\to 0$ for some $i\in\{1,\ldots,n\}$. 
Computing formally the energy identity
$$
  \frac{d\E}{dt}(\bm{c}) 
	+ \sum_{i,j=1}^n\int_\Omega B_{ij}(\bm{c})\na\mu_i\cdot\na\mu_j dx = 0,
$$
the degeneracy at $c_i=0$ prevents uniform estimates for $\na\mu_i$ in $L^2(\Omega)$.
In some works, this issue has been compensated. For instance, there exists
an entropy equality for the model of \cite{ElGa97} yielding an $L^2(\Omega)$
bound for $\Delta c_i$, and the decoupled mobilities in \cite{CMN19,MaZi17} allow for
decoupled entropy estimates. In our model, the energy identity does not
provide a gradient estimate for the full vector 
$(\na\mu_1,\ldots,\na\mu_n)$ but only for a projection:
$$
  \frac{d\E}{dt}(\bm{c}) + C_1\sum_{i=1}^n\int_\Omega
	\bigg|\sum_{j=1}^n(\delta_{ij}-\sqrt{c_ic_j})\sqrt{c_j}\na\mu_j\bigg|^2 dx \le 0,
$$
where $\delta_{ij}$
is the Kronecker delta; see Lemma \ref{lem.fei}.
(The constant $C_1>0$ and all constants that follow do not depend on $\bm{c}$.)
To address the degeneracy issue, we compute the time derivative of the entropy:
$$
  \frac{d\H}{dt}(\bm{c}) + \sum_{i,j=1}^n\int_\Omega B_{ij}(\bm{c})\na\log c_i
	\cdot\na\mu_j dx = 0.
$$
This does not provide a uniform estimate for $\Delta c_i$, but we show 
(see Lemma \ref{lem.fei}) that
$$
  \frac{d\H}{dt}(\bm{c}) + C_2\sum_{i=1}^n\int_\Omega(\Delta c_i)^2 dx
	\le C_3\sum_{i=1}^n\int_\Omega\bigg|\sum_{j=1}^n(\delta_{ij}-\sqrt{c_ic_j})
	\sqrt{c_j}\na\mu_j\bigg|^2 dx.
$$
Combining the energy and entropy inequalities in a suitable way, the last
integral cancels:
\begin{equation}\label{1.cee}
  \frac{d}{dt}\bigg(\H(\bm{c})+\frac{C_3}{C_1}\E(\bm{c})\bigg)
	+ C_2\sum_{i=1}^n\int_\Omega(\Delta c_i)^2 dx \le 0.
\end{equation}
This provides the desired $H^2(\Omega)$ bound for $c_i$.
Note that the energy or entropy inequality alone does not give estimates
for $c_i$. The combined energy-entropy inequality is the key idea of the paper
for both the existence and weak-strong uniqueness analysis.

\subsection{Main results}

We make the following assumptions:

\begin{itemize}
\item[(A1)] Domain: $\Omega\subset\R^d$ with $d\le 3$ is a bounded domain.
We set $Q_T=\Omega\times(0,T)$ for $T>0$.
\item[(A2)] Initial data: $c_i^0\in H^1(\Omega)$ satisfies $c_i^0\ge 0$ in $\Omega$,
$i=1,\ldots,n$, and $\sum_{i=1}^n c_i^0=1$ in $\Omega$. 
\end{itemize}

The assumption $d\le 3$ is made for convenience, it can be relaxed for higher space
dimension, by choosing another regularization in the existence proof;
see \eqref{3.regul1}. 
The constraint $\sum_{i=1}^n c_i^0=1$ expresses the saturation of the mixture
and it propagates to the solution.
We introduce the matrix $D_{ij}(\bm{c})=(1/\sqrt{c_i})K_{ij}(\bm{c})\sqrt{c_j}$
for $i,j=1,\ldots,n$ and set
\begin{equation}\label{1.L}
  L(\bm{c}) = \{\bm{x}\in\R^n:\sqrt{\bm{c}}\cdot\bm{x}=0\}, \quad
	L^\perp(\bm{c}) = \operatorname{span}\{\sqrt{\bm{c}}\},
\end{equation}
where $\sqrt{\bm{c}}=(\sqrt{c_1},\ldots,\sqrt{c_n})$.
The projections $P_L(\bm{c})$, $P_{L^\perp}(\bm{c})\in\R^{n\times n}$ 
on $L(\bm{c})$, $L(\bm{c})^\perp$, re\-spec\-tive\-ly, are given by
\begin{equation}\label{1.PL} 
  P_L(\bm{c})_{ij} = \delta_{ij}-\sqrt{c_ic_j}, \quad
	P_{L^\perp}(\bm{c})_{ij} = \sqrt{c_ic_j}\quad\mbox{for }i,j=1,\ldots,n.
\end{equation}
We impose for any given $\bm{c}\in[0,1]^n$ the following assumptions on 
$D(\bm{c})=(D_{ij}(\bm{c}))\in\R^{n\times n}$:

\begin{itemize}
\item[(B1)]$D(\bm{c})$ is symmetric and $\ran D(\bm{c})=L(\bm{c})$,
$\ker(D(\bm{c})P_L(\bm{c}))=L^\perp(\bm{c})$.
\item[(B2)] For all $i,j=1,\ldots,n$, $D_{ij}\in C^1([0,1]^n)$ is bounded.
\item[(B3)] The matrix $D(\bm{c})$ is positive semidefinite, and there exists 
$\rho>0$ such that all eigenvalues $\lambda\neq 0$ of
$D(\bm{c})$ satisfy $\lambda\ge\rho$.
\item[(B4)] For all $i,j=1,\ldots,n$, $K_{ij}(\bm{c})
=\sqrt{c_i}D_{ij}(\bm{c})/\sqrt{c_j}$ is bounded in $[0,1]^n$.
\end{itemize}

Examples of matrices $D(\bm{c})$ satisfying these assumptions
are presented in Section \ref{sec.exam}.
Our first main result is the global existence of weak solutions.

\begin{theorem}[Global existence]\label{thm.ex}
Let Assumptions (A1)--(A2) and (B1)--(B4) hold. Then there exists a weak solution
$\bm{c}$ to \eqref{1.eq1}--\eqref{1.mu} satisfying $0\le c_i\le 1$,
$\sum_{i=1}^n c_i=1$ in $\Omega\times(0,\infty)$, 
$$
  c_i\in L_{\rm loc}^\infty(0,\infty;H^1(\Omega))\cap 
	L_{\rm loc}^2(0,\infty;H^2(\Omega)), \quad
	\pa_t c_i\in L_{\rm loc}^2(0,\infty;H^1(\Omega)'),
$$
the initial condition in \eqref{1.bic} is satisfied in the sense of
$H^1(\Omega)'$, and for all $\phi_i\in C_0^\infty(\Omega\times(0,\infty))$,
\begin{align}\label{1.weak}
  0 &= -\int_0^\infty\int_\Omega c_i\pa_t\phi_i dxdt
	+ \sum_{j=1}^n\int_0^\infty\int_\Omega B_{ij}(\bm{c})\na\log c_i\cdot\na\phi_i dxdt \\
	&\phantom{xx}{}+ \sum_{j=1}^n\int_0^\infty\int_\Omega\diver(B_{ij}(\bm{c})\na\phi_i)
	\Delta c_j dxdt, \nonumber
\end{align}
where $B_{ij}(\bm{c})$ is defined in \eqref{1.B}. Furthermore,
\begin{align}\label{1.EH}
  \H(\bm{c}(\cdot,T)) &+ C_1\E(\bm{c}(\cdot,T)) 
	+ C_2\int_0^T\int_\Omega(|\na\sqrt{\bm{c}}|^2+|\Delta\bm{c}|^2) dxdt \\
	&{}+ C_2\int_0^T\int_\Omega|\bm{\zeta}|^2 dxdt
	\le \H(\bm{c}^0) + C_1\E(\bm{c}^0), \nonumber
\end{align}
where $C_1>0$ depends on $\rho$, $n$, $\|D(\bm{c})\|_F$ and $C_2>0$ depends
on $n$, $\|D(\bm{c})\|_F$ ($\|\cdot\|_F$ is the Frobenius matrix norm
and $\rho$ is introduced in Assumption (B3)). Moreover,
$\bm{\zeta}$ is the weak $L^2(\Omega)$ limit of an approximating sequence of
$\sum_{j=1}^n P_L(\bm{c})_{ij}\sqrt{c_j}\na\mu_j$.
\end{theorem}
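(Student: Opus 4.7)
The plan is to build approximate solutions by a two-parameter approximation (time step $\tau$ and regularization $\varepsilon$, following \eqref{3.regul1}), to derive uniform bounds by reproducing the combined energy-entropy inequality \eqref{1.cee} at the discrete level, and to pass to the limit by compactness. A central observation is that the weak formulation \eqref{1.weak} has been written so that the first-order derivative of $\mu_j$ has been moved onto the test function and, on the $-\Delta c_j$-part of $\mu_j$, a second integration by parts has been performed; the upshot is that only $\nabla\log c_i$ (equivalently $\nabla\sqrt{c_i}/\sqrt{c_i}$) and $\Delta c_j$ appear, both of which are controlled in $L^2(Q_T)$ by the $H^2$-estimate that \eqref{1.cee} supplies, and both of which are benign near $\{c_i=0\}$ once paired with the Bott--Duffin factors hidden in $B_{ij}$.

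\textbf{Construction of approximate solutions.} Fix $\tau,\varepsilon>0$ and use an implicit Euler scheme for the $\varepsilon$-regularized system. Each time step reduces to a nonlinear elliptic problem, solved by a Leray--Schauder fixed point: freeze $\bm{c}$ in the coefficients $B_{ij}(\bm{c})$, use the unique solvability of the Bott--Duffin system on $L(\bm{c})$ from Lemma \ref{lem.Dz} and its bounds from Lemma \ref{lem.DB}, and exploit the compactness of the $H^2\hookrightarrow H^1$ embedding (valid for $d\le 3$). The regularization in \eqref{3.regul1} ensures full $H^2$-ellipticity of the linearized operator and keeps the $c_i^{(\varepsilon)}$ bounded away from $0$ so that $B_{ij}$ is nondegenerate at the $\varepsilon$-level. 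The saturation $\sum_i c_i^{k+1}=1$ is preserved because summing the $n$ equations and using $\sum_i K_{ij}=0$ (hence $\sum_i B_{ij}=0$) gives a closed heat-type equation for $\sum_i c_i^{k+1}$ with the correct initial datum; the pointwise bound $0\le c_i\le 1$ then follows from the saturation and the maximum principle applied to the auxiliary regularization, or from a Stampacchia truncation.

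\textbf{A priori bounds.} Testing the discrete equation against $\mu_i^{k+1}$ produces a discrete energy identity and testing against $\log c_i^{k+1}$ a discrete entropy identity. Following the computation behind Lemma \ref{lem.fei}, a fixed linear combination of the two cancels the projected-gradient term appearing on the right-hand side of the entropy inequality and leaves the full $L^2$-norm of $\Delta c_i^{k+1}$ on the dissipative side, yielding \eqref{1.EH} at the discrete level, uniformly in $\tau,\varepsilon$. Standard arguments then produce uniform bounds for $\bm{c}^{(\tau,\varepsilon)}$ in $L^\infty_t H^1(\Omega)\cap L^2_t H^2(\Omega)$, for the projected flux $\sum_j P_L(\bm{c})_{ij}\sqrt{c_j}\nabla\mu_j$ in $L^2(Q_T)$, and, by testing \eqref{1.weak} with suitable $\phi_i$ and duality, for $\partial_t\bm{c}^{(\tau,\varepsilon)}$ in $L^2_t H^1(\Omega)'$.

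\textbf{Limit passage and main obstacle.} Aubin--Lions compactness gives $\bm{c}^{(\tau,\varepsilon)}\to\bm{c}$ strongly in $L^2_t H^1$ and a.e.\ along a subsequence, and since $H^2\hookrightarrow L^\infty$ in $d\le 3$ also in $L^p(Q_T)$ for every $p<\infty$; this promotes $B_{ij}(\bm{c}^{(\tau,\varepsilon)})\to B_{ij}(\bm{c})$ to strong $L^p$-convergence by (B2) and Lemma \ref{lem.DB}. Combined with the weak $L^2$-convergence of $\Delta c_j^{(\tau,\varepsilon)}$ and $\nabla\sqrt{c_i^{(\tau,\varepsilon)}}$, the three integrals in \eqref{1.weak} pass to the limit, and \eqref{1.EH} is preserved by weak lower semicontinuity. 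The hardest step, I expect, is the identification of $\bm{\zeta}$: because $B(\bm{c})\nabla\bm{\mu}$ itself is not uniformly bounded, owing to the degeneracy of $B$ at $\{c_i=0\}$, one cannot pass to the limit in $\nabla\mu_j$ directly. One instead works only with the projected flux, which is uniformly bounded in $L^2$ thanks to \eqref{1.EH}, extracts its weak $L^2$-limit as $\bm{\zeta}$, and shows that this object is compatible with the final weak formulation---a step whose delicacy is the principal technical point of the existence proof.
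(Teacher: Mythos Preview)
Your overall architecture---approximation, combined energy-entropy inequality, Aubin--Lions compactness---matches the paper's. But there is a genuine gap in how you handle the degeneracy and positivity, and this is not a minor detail.

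You assert that ``the regularization in \eqref{3.regul1} \ldots\ keeps the $c_i^{(\varepsilon)}$ bounded away from $0$'' and that ``the pointwise bound $0\le c_i\le 1$ then follows \ldots\ from the maximum principle applied to the auxiliary regularization, or from a Stampacchia truncation.'' Neither mechanism is available here: the equations are fourth order, and there is no maximum principle or Stampacchia argument for Cahn--Hilliard-type systems. The $\varepsilon$-term in \eqref{3.regul1} provides $H^2$-coercivity for the fixed-point argument, nothing more; it does \emph{not} give a lower bound on $c_i$. Without such a bound, $B_{ij}(\bm{c})$ is undefined where $c_i\le 0$, and the test function $\log c_i^{k+1}$ you invoke for the entropy identity does not exist.

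The paper resolves this by introducing, \emph{in addition} to the $(\tau,\varepsilon)$-scheme, a truncation parameter $\delta$: the mobility is replaced by $B^\delta(\bm{c})=R(\chi_\delta\bm{c})D^{BD}(\chi_\delta\bm{c})R(\chi_\delta\bm{c})$ (see \eqref{3.Bdelta}), which is well-defined and uniformly nondegenerate for \emph{all} $\bm{c}\in\R^n$, and the entropy density $h_i(r)=r\log r$ is replaced by a $C^2$ approximation $h_i^\delta$ (see \eqref{3.hdelta}) that is finite for negative arguments. The approximate solutions $c_i^\delta$ are explicitly allowed to be negative. Nonnegativity is recovered only for the \emph{limit} $c_i$, via the Elliott--Luckhaus argument: the uniform bound on $\int_\Omega h_i^\delta(c_i^\delta)\,dx$ forces $\int_{\{c_i^\delta<0\}}|c_i^\delta|\,dx\le C/|\log\delta|\to 0$. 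You are missing this entire layer of the approximation, and without it the scheme you describe cannot be set up.

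A secondary point: you identify the ``hardest step'' as the identification of $\bm{\zeta}$. In fact the paper does \emph{not} identify $\bm{\zeta}$ in general; it is simply the weak $L^2$ limit of the approximating projected flux, and identification (Lemma~\ref{lem.ident}) requires the extra regularity hypothesis \eqref{1.cregul}. The weak formulation \eqref{1.weak} is designed precisely so that this identification is not needed for existence.
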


Some comments are in order. First, by Assumption (B2), the elements of the matrix
$D(\bm{c})$ are bounded for any $\bm{c}\in[0,1]^n$ and therefore, the quantity
$\|D(\bm{c})\|_F$ is bounded uniformly in $\bm{c}$.
Second, the weak formulation \eqref{1.weak} makes sence 
since $B_{ij}(\bm{c})\na\log c_i \in L^2(Q_T)$. 
Indeed, by the definition of $B(\bm{c})$, we have
$$
  B_{ij}(\bm{c})\na\log c_j = \sqrt{c_i}D^{BD}_{ij}(\bm{c})\frac{1}{\sqrt{c_j}}\na c_j,
$$
and the matrix $\sqrt{c_i}D^{BD}_{ij}(\bm{c})/\sqrt{c_j}$ is bounded for all
$\bm{c}\in[0,1]^n$; see Lemma \ref{lem.DB} (iii) below. However, note that
the expression $\sum_{j=1}^n B_{ij}(\bm{c})\na\mu_j$ is generally not an element
of $L^2(Q_T)$. In particular, we cannot expect that $\na\Delta c_i\in L^2(Q_T)$.
Third, we have not been able to identify the weak limit $\bm{\zeta}$ because of 
low regularity. However, if 
$\sum_{j=1}^n P_L(\bm{c})_{ij}\sqrt{c_j}\na\mu_j\in 
L^2_{\rm loc}(0,\infty;L^2(\Omega))$ holds for all $i=1,\ldots,n$, then
we can identify $\zeta_i=\sum_{j=1}^n P_L(\bm{c})_{ij}\sqrt{c_j}\na\mu_j$;
see Lemma \ref{lem.ident}.

To prove Theorem \ref{thm.ex}, we first introduce a truncation with parameter
$\delta\in(0,1)$ as in \cite{ElGa97} to avoid the degeneracy. 
Then we reduce the cross-diffusion system to $n-1$
equations by replacing $c_n$ by $1-\sum_{i=1}^{n-1}c_i$. The advantage is that
the diffusion matrix of the reduced system is positive definite
(with a lower bound depending on $\delta$). The existence of solutions
$c_i^\delta$ to the truncated,
reduced system is proved by an approximation as in \cite{Jue16}
and the Leray--Schauder fixed-point theorem; see Section \ref{sec.approx}. 
An approximate version of the 
free energy estimate \eqref{1.EH} 
(proved in Lemma \ref{lem.eei} in Section \ref{sec.unif})
provides suitable uniform bounds that allow
us to perform the limit $\delta\to 0$. The approximate densities $c_i^\delta$
may be negative but, by exploiting the entropy bound for $c_i^\delta$,
its limit $c_i$ turns out to be nonnegative.
The limit $\delta\to 0$ is then performed in Section \ref{sec.exproof}, using
the uniform estimates and compactness arguments.

Our second main result is concerned with the weak-strong uniqueness.
For this, we define the relative entropy and free energy
in the spirit of \cite{GLT17} by, respectively, 
\begin{align}
  \H(\bm{c}|\bar{\bm{c}}) &:= \H(\bm{c}) - \H(\bar{\bm{c}})
	- \frac{\pa\H}{\pa\bm{c}}(\bar{\bm{c}})\cdot(\bm{c}-\bar{\bm{c}})
	= \sum_{i=1}^n\int_\Omega\bigg(c_i\log\frac{c_i}{\bar{c}_i} - (c_i-\bar{c}_i)
  \bigg)dx, \label{1.relH} \\
  \E(\bm{c}|\bar{\bm{c}}) &:= \E(\bm{c}) - \E(\bar{\bm{c}})
	- \frac{\pa\E}{\pa\bm{c}}(\bar{\bm{c}})\cdot(\bm{c}-\bar{\bm{c}})
	= \H(\bm{c}|\bar{\bm{c}}) + \frac12\sum_{i=1}^n\int_\Omega
	|\na(c_i-\bar{c}_i)|^2dx. \label{1.relE}
\end{align}

\begin{theorem}[Weak-strong uniqueness]\label{thm.wsu}
Let Assumptions (A1)--(A2), (B1)--(B4) hold, let $\bm{c}$ be a weak solution
to \eqref{1.eq1}--\eqref{1.mu} with initial datum $\bm{c^0}$, and let
$\bar{\bm{c}}$ be a strong solution to \eqref{1.eq1}--\eqref{1.mu} with
initial datum $\bar{\bm{c}}^0$. We assume that the weak solution
$\bm{c}$ satisfies
\begin{equation}\label{1.cregul}
  \sum_{j=1}^n P_L(\bm{c})_{ij}\sqrt{c_j}\na\mu_j
	\in L^2_{\rm loc}(0,\infty;L^2(\Omega))
	\mbox{ for }i,j=1,\ldots,n
\end{equation}
(see \eqref{1.PL} for the definition of $P_L(\bm{c})$)
and for all $T>0$ the energy and entropy inequalities
\begin{align}
  \E(\bm{c}(T)) + \sum_{i,j=1}^n\int_0^T\int_\Omega B_{ij}(\bm{c})
	\na\mu_i\cdot\na\mu_j dxdt &\le \E(\bm{c}^0), \label{1.dEdt} \\
	\H(\bm{c}(T)) + \sum_{i,j=1}^n\int_0^T\int_\Omega B_{ij}(\bm{c})
	\na\log c_i\cdot\na\mu_j dxdt &\le \H(\bm{c}^0). \label{1.dHdt}
\end{align}
The strong solution $\bar{\bm{c}}$ is supposed to be strictly positive,
i.e., there exists $m>0$ such that $\bar{c}_i\ge m$ in $\Omega$, $t>0$, and
satisfies the regularity
$$
  \bar{c}_i\in L_{\rm loc}^\infty(0,\infty;W^{3,\infty}(\Omega)),
	\quad \na\diver\bigg(\frac{1}{\bar{c}_i}B_{ij}(\bar{\bm{c}})\na\bar{\mu}_j\bigg)
	\in L^\infty_{\rm loc}(0,\infty;L^\infty(\Omega))
$$
for $i=1,\ldots,n$, 
as well as for any $T>0$ the energy and entropy conservation identities
\begin{align}
  \E(\bar{\bm{c}}(T)) + \sum_{i,j=1}^n\int_0^T\int_\Omega B_{ij}(\bar{\bm{c}})
	\na\bar{\mu}_i\cdot\na\bar{\mu}_j dxdt &= \E(\bar{\bm{c}}^0), \label{1.dEdtbar} \\
	\H(\bar{\bm{c}}(T)) + \sum_{i,j=1}^n\int_0^T\int_\Omega B_{ij}(\bar{\bm{c}})
	\na\log \bar{c}_i\cdot\na\bar{\mu}_j dxdt &= \H(\bar{\bm{c}}^0), 
\label{1.dHdtbar}
\end{align}
where $\mu_i=\log c_i-\Delta c_i$ and $\bar{\mu}_i=\log\bar{c}_i-\Delta \bar{c}_i$.
Then, for any $T>0$, there exist constants $C_1$, only depending on $\|D(\bm{c})\|_F$,
$n$, $\rho$, and $C_2(T)>0$, only depending on $T$, 
$\operatorname{meas}(\Omega)$, $n$, $\rho$, such that
\begin{equation}\label{1.comb}
  \H(\bm{c}(T)|\bar{\bm{c}}(T)) + C_1\E(\bm{c}(T)|\bar{\bm{c}}(T))
	\le C_2(T)\big(\H(\bm{c}^0|\bar{\bm{c}}^0) + C_1\E(\bm{c}^0|\bar{\bm{c}}^0)\big).
\end{equation}
In particular, if $\bm{c}^0=\bar{\bm{c}}^0$ then the weak and strong solutions
coincide.
\end{theorem}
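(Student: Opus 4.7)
I plan to adapt the relative entropy method, mirroring the combined energy-entropy strategy that drives the existence proof (inequality \eqref{1.cee}). The goal is to establish
\begin{equation*}
\frac{d}{dt}\big(\H(\bm{c}|\bar{\bm{c}})+C_1\E(\bm{c}|\bar{\bm{c}})\big) + C_2\|\Delta(\bm{c}-\bar{\bm{c}})\|_{L^2(\Omega)}^2 \le \Lambda(t)\big(\H(\bm{c}|\bar{\bm{c}})+C_1\E(\bm{c}|\bar{\bm{c}})\big)
\end{equation*}
for some $\Lambda\in L^1_{\rm loc}(0,\infty)$ depending only on norms of $\bar{\bm{c}}$, and then to apply Gr\"onwall.

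\textbf{Step 1: differentiating the relative quantities.} Starting from the product-rule expansion
\begin{equation*}
\frac{d}{dt}\E(\bm{c}|\bar{\bm{c}}) = \frac{d\E}{dt}(\bm{c}) - \frac{d\E}{dt}(\bar{\bm{c}}) - \sum_i\int_\Omega \pa_t\bar\mu_i(c_i-\bar c_i)\,dx - \sum_i\int_\Omega \bar\mu_i\pa_t(c_i-\bar c_i)\,dx,
\end{equation*}
I would insert \eqref{1.dEdt} and \eqref{1.dEdtbar} in the first two terms and handle the last term by using $\bar\mu_i$ as a test function in \eqref{1.weak} (admissible by density since $\bar{\bm{c}}\in W^{3,\infty}$ forces $\bar\mu_i\in W^{1,\infty}$) and by inserting the strong PDE for $\pa_t\bar c_i$. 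Cancelling the $B(\bar{\bm{c}})\na\bar{\bm{\mu}}\cdot\na\bar{\bm{\mu}}$ contributions and adding-subtracting $B(\bm{c})\na\bar{\bm{\mu}}$ yields
\begin{equation*}
\frac{d}{dt}\E(\bm{c}|\bar{\bm{c}}) + \sum_{i,j}\int_\Omega B_{ij}(\bm{c})\na(\mu_i-\bar\mu_i)\cdot\na(\mu_j-\bar\mu_j)\,dx = R_\E,
\end{equation*}
with $R_\E$ gathering cross terms of the form $\int[B_{ij}(\bm{c})-B_{ij}(\bar{\bm{c}})]\na\bar\mu_i\cdot\na\bar\mu_j\,dx$ and $\int\pa_t\bar\mu_i(c_i-\bar c_i)\,dx$. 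The analogous computation starting from \eqref{1.dHdt} and \eqref{1.dHdtbar}, with $\log\bar c_i$ as test function (permitted since $\bar c_i\ge m>0$), produces
\begin{equation*}
\frac{d}{dt}\H(\bm{c}|\bar{\bm{c}}) + \sum_{i,j}\int_\Omega B_{ij}(\bm{c})\na(\log c_i-\log\bar c_i)\cdot\na(\mu_j-\bar\mu_j)\,dx = R_\H.
\end{equation*}

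\textbf{Step 2: relative version of Lemma \ref{lem.fei} and cancellation.} The algebraic manipulations underlying Lemma \ref{lem.fei}, which rely solely on the structure of $B(\bm{c})$ and Assumption (B3), remain valid if $\bm{\mu}$ is replaced by $\bm{\mu}-\bar{\bm{\mu}}$ and $\log\bm{c}$ by $\log\bm{c}-\log\bar{\bm{c}}$. They deliver
\begin{align*}
\sum_{i,j}B_{ij}(\bm{c})\na(\mu_i-\bar\mu_i)\cdot\na(\mu_j-\bar\mu_j) &\ge C_1\mathcal{P}, \\
\sum_{i,j}B_{ij}(\bm{c})\na(\log c_i-\log\bar c_i)\cdot\na(\mu_j-\bar\mu_j) &\ge C_2\|\Delta(\bm{c}-\bar{\bm{c}})\|_{L^2}^2 - C_3\mathcal{P} - R',
\end{align*}
where $\mathcal{P}:=\sum_i\big\|\sum_j P_L(\bm{c})_{ij}\sqrt{c_j}\na(\mu_j-\bar\mu_j)\big\|_{L^2}^2$ and $R'$ collects perturbations involving $\na\bar c_i,\Delta\bar c_i$ that are $L^\infty$-bounded through the strong-solution regularity. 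Forming $\H(\bm{c}|\bar{\bm{c}})+(C_3/C_1)\E(\bm{c}|\bar{\bm{c}})$ (the $C_1$ of the theorem statement) cancels $\mathcal{P}$ and leaves $C_2\|\Delta(\bm{c}-\bar{\bm{c}})\|_{L^2}^2$ as the clean dissipation.

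\textbf{Step 3: remainder control, Gr\"onwall, and the main obstacle.} Since $B$ is $C^1$ on $[0,1]^n$ (via (B2) and the Bott--Duffin construction), $|B_{ij}(\bm{c})-B_{ij}(\bar{\bm{c}})|\le C\|\bm{c}-\bar{\bm{c}}\|$. Combined with the $L^\infty$ bounds on $\na\bar\mu_i$, $\pa_t\bar\mu_i$ and $\na\diver((1/\bar c_i)B_{ij}(\bar{\bm{c}})\na\bar\mu_j)$ supplied by the strong solution, the remainders $R_\E,R_\H,R'$ are bounded by $C(T)\big(\|\bm{c}-\bar{\bm{c}}\|_{L^2}^2+\|\na(\bm{c}-\bar{\bm{c}})\|_{L^2}^2\big) + \tfrac{1}{2}C_2\|\Delta(\bm{c}-\bar{\bm{c}})\|_{L^2}^2$, the last piece absorbed into the dissipation by Young. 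The first two norms are controlled, respectively, by the Csisz\'ar--Kullback inequality ($\|\bm{c}-\bar{\bm{c}}\|_{L^2}^2\le C\H(\bm{c}|\bar{\bm{c}})$ since $c_i,\bar c_i\le 1$) and by the identity $\E(\bm{c}|\bar{\bm{c}})-\H(\bm{c}|\bar{\bm{c}}) = \tfrac{1}{2}\sum_i\|\na(c_i-\bar c_i)\|_{L^2}^2$, and Gr\"onwall then yields \eqref{1.comb}. The hardest point will be the cross term $\int[B_{ij}(\bm{c})-B_{ij}(\bar{\bm{c}})]\na(\mu_i-\bar\mu_i)\cdot\na\bar\mu_j\,dx$ emerging in Step 1: under \eqref{1.cregul} only the $L(\bm{c})$-projected combination of $\na(\mu_j-\bar\mu_j)$ lies in $L^2$, not $\na(\mu_i-\bar\mu_i)$ individually. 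The resolution is a further integration by parts shifting $\na$ onto the smoother factor $[B_{ij}(\bm{c})-B_{ij}(\bar{\bm{c}})]\na\bar\mu_j$, leaving the factor $(\mu_i-\bar\mu_i)=(\log c_i-\log\bar c_i)-\Delta(c_i-\bar c_i)$, which is controlled respectively by $\H(\bm{c}|\bar{\bm{c}})$ and by the $\Delta$-dissipation of Step 2; the $W^{3,\infty}$ regularity of $\bar{\bm{c}}$ is exactly what ensures $\diver([B_{ij}(\bm{c})-B_{ij}(\bar{\bm{c}})]\na\bar\mu_j)\in L^\infty$, closing the argument.
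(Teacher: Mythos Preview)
Your overall architecture---combine relative entropy and relative energy inequalities so that the ``bad'' projected term $\mathcal{P}$ cancels and a clean $\|\Delta(\bm{c}-\bar{\bm{c}})\|_{L^2}^2$ dissipation survives, then close by Gr\"onwall---matches the paper exactly, and you have correctly located the genuine difficulty in the cross term
\[
  \mathcal{C} \;=\; \sum_{i,j}\int_\Omega\Big(B_{ij}(\bm{c})-\tfrac{c_i}{\bar c_i}B_{ij}(\bar{\bm{c}})\Big)\na(\mu_i-\bar\mu_i)\cdot\na\bar\mu_j\,dx.
\]
However, your proposed resolution of $\mathcal{C}$ does not work. After integrating by parts you are left with
\[
  -\int_\Omega(\mu_i-\bar\mu_i)\,\diver\!\Big(\big[B_{ij}(\bm{c})-B_{ij}(\bar{\bm{c}})\big]\na\bar\mu_j\Big)\,dx,
\]
and both factors are problematic. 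First, the divergence is \emph{not} in $L^\infty$: expanding it produces $\sum_k(\pa B_{ij}/\pa c_k)(\bm{c})\,\na c_k\cdot\na\bar\mu_j$, which carries a gradient of the \emph{weak} solution and is at best $L^2$ in space; the $W^{3,\infty}$ regularity of $\bar{\bm{c}}$ cannot help here. (It also contains $[B_{ij}(\bm{c})-B_{ij}(\bar{\bm{c}})]\Delta\bar\mu_j$, and $\Delta\bar\mu_j$ involves $\Delta^2\bar c_j$, which is not controlled by the assumed $W^{3,\infty}$ bound either.) Second, the undifferentiated factor $\log c_i-\log\bar c_i$ is \emph{not} controlled by $\H(\bm{c}|\bar{\bm{c}})$: the relative entropy density $c_i\log(c_i/\bar c_i)-(c_i-\bar c_i)$ bounds $(c_i-\bar c_i)^2$ (Csisz\'ar--Kullback), but says nothing about $|\log c_i|$ on the set $\{c_i\approx 0\}$, and the weak solution is not known to be bounded away from zero.

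The paper handles $\mathcal{C}$ differently. Writing $B_{ij}(\bm{c})=\sqrt{c_i}D^{BD}_{ij}(\bm{c})\sqrt{c_j}$ one factors out $\sqrt{c_i}$ and decomposes $\sqrt{c_i}\na(\mu_i-\bar\mu_i)$ along $L(\bm{c})\oplus L^\perp(\bm{c})$. On $L(\bm{c})$ the projected vector $\sum_j P_L(\bm{c})_{ij}\sqrt{c_j}\na(\mu_j-\bar\mu_j)$ is in $L^2$ by hypothesis \eqref{1.cregul}, and the Lipschitz bound $|E_{ij}(\bm{c},\bar{\bm{c}})|\le C|\bm{c}-\bar{\bm{c}}|$ suffices. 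On $L^\perp(\bm{c})$ the key algebraic fact $P_{L^\perp}(\bm{c})D^{BD}(\bm{c})=0$ kills the $\bm{c}$-part of the mobility, and what remains carries the factor $c_k\na\log(c_k/\bar c_k)=\na c_k-c_k\na\log\bar c_k$, which is manifestly in $L^2$ with the required smallness; the $\Delta(c_k-\bar c_k)$ piece is then isolated by one integration by parts and absorbed by the dissipation. This projection trick, not a global integration by parts, is what makes the estimate close under the stated regularity.
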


Assumption \eqref{1.cregul} guarantees that the flux 
$\sum_{j=1}^n B_{ij}(\bm{c})\na\mu_j$ lies in $L^2(Q_T)$. Indeed, we prove in
Lemma \ref{lem.DB} (i) in Section \ref{sec.mobil} that $D_{ij}^{BD}(\bm{c})$
is bounded for $\bm{c}\in[0,1]^n$. Therefore, since 
$D^{BD}(\bm{c})=D^{BD}(\bm{c})P_L(\bm{c})$, assumption \eqref{1.cregul} 
and $c_i\in L^\infty(Q_T)$ imply that
\begin{equation}\label{1.regflux}
  \sum_{j=1}^n B_{ij}(\bm{c})\na\mu_j 
	= \sqrt{c_i}\sum_{j,k=1}^n D_{ik}^{BD}(\bm{c})P_L(\bm{c})_{kj}\sqrt{c_j}\na\mu_j
	\in L^2(Q_T).
\end{equation}
By the way, it follows from 
$\sum_{j=1}^n P_L(\bm{c})_{ij}\sqrt{c_j}\na\log c_j = 2\na\sqrt{c_i}\in L^2(Q_T)$ that
\begin{equation}\label{1.regc}
  \sum_{j=1}^n P_L(\bm{c})_{ij}\sqrt{c_j}\na\Delta c_j
	= \sum_{j=1}^n P_L(\bm{c})_{ij}\sqrt{c_j}\na(\log c_j-\mu_j) \in L^2(Q_T).
\end{equation}
Since $\na\Delta c_i$ may be not in $L^2(Q_T)$, we interpret \eqref{1.regc}
in the sense of distributions, i.e.\ for all $\Phi\in
C_0^\infty(\Omega;\R^d)$,
$$
  \bigg\langle\sum_{j=1}^n P_L(\bm{c})_{ij}\sqrt{c_j}\na\Delta c_j,\Phi
	\bigg\rangle
	= -\sum_{j=1}^n\int_\Omega\big(\na(P_L(\bm{c})_{ij}\sqrt{c_j})\cdot\Phi
	+ P_L(\bm{c})_{ij}\sqrt{c_j}\diver\Phi\big)\Delta c_j dx.
$$

For the proof of Theorem \ref{thm.wsu}, we estimate first the time derivative of
the relative entropy \eqref{1.relH}:
\begin{align*}
  \frac{d\H}{dt}&(\bm{c}|\bar{\bm{c}})
	+ C_1\sum_{i=1}^n\int_\Omega\bigg|\sum_{j=1}^n P_L(\bm{c})_{ij}
	\sqrt{c_j}\na\log\frac{c_j}{\bar{c}_j}\bigg|^2 dx
	+ C_1\sum_{i=1}^n\int_\Omega(\Delta(c_i-\bar{c}_i))^2 dx \\
	&\le C_2\sum_{i=1}^n\int_\Omega\bigg|\sum_{j=1}^n P_L(\bm{c})_{ij}
	\sqrt{c_j}\na(\mu_j-\bar{\mu}_j)\bigg|^2 dx + C_3\int_\Omega
	\E(\bm{c}|\bar{\bm{c}})dx,
\end{align*}
where $C_i>0$ are some constants depending only on the data.
The first term on the right-hand side can be handled by estimating the time
derivative of the relative energy \eqref{1.relE}:
\begin{align*}
  \frac{d\E}{dt}&(\bm{c}|\bar{\bm{c}}) + C_4\sum_{i=1}^n\int_\Omega
	\bigg|\sum_{j=1}^n P_L(\bm{c})_{ij}\sqrt{c_j}\na(\mu_j-\bar{\mu}_j)\bigg|^2 dx \\
	&\le \theta\sum_{i=1}^n\int_\Omega\bigg|\sum_{j=1}^n P_L(\bm{c})_{ij}
	\sqrt{c_j}\na\log\frac{c_j}{\bar{c}_j}\bigg|^2 dx
	+ \theta\sum_{i=1}^n\int_\Omega(\Delta(c_i-\bar{c}_i))^2 dx \\
	&\phantom{xx}{}+ C_5(\theta)\int_\Omega\E(\bm{c}|\bar{\bm{c}})dx,
\end{align*}
where $\theta>0$ can be arbitrarily small. Choosing $\theta=C_1C_4/C_2$, 
we can combine both estimates leading to 
$$
  \frac{d}{dt}\bigg(\H(\bm{c}|\bar{\bm{c}}) + \frac{C_2}{C_4}\E(\bm{c}|\bar{\bm{c}})
	\bigg)
	\le \bigg(C_3+\frac{C_2C_5}{C_4}\bigg)\E(\bm{c}|\bar{\bm{c}}),
$$
and the theorem follows after applying Gronwall's lemma. As the computations 
are quite involved, we compute first in Section \ref{sec.wsu.formal} 
the time derivative of the relative entropy and energy for {\em smooth} solutions.
The rigorous proof of the combined relative entropy-energy inequality 
for weak solutions $\bm{c}$ and strong solutions $\bar{\bm{c}}$ is
then performed in Section \ref{sec.wsu.rig}.

The paper is organized as follows. The Bott--Duffin matrix inverse is introduced
in Section \ref{sec.mobil}, some properties of the mobility matrix $B(\bm{c})$ are
proved, and the combined energy-entropy inequality \eqref{1.cee} is derived for
smooth solutions. The global existence of solutions (Theorem \ref{thm.ex}) is
shown in Section \ref{sec.ex}, while Section \ref{sec.wsu} is concerned with the
proof of the weak-strong uniqueness property (Theorem \ref{thm.wsu}). Finally,
we present some examples verifying Assumptions (B1)--(B4) in Section \ref{sec.exam}.

\subsection*{Notation}

Elements of the matrix $A\in\R^{n\times n}$ are denoted by $A_{ij}$,
$i,j=1,\ldots,n$, and the elements of a vector $\bm{c}\in\R^n$ are $c_1,\ldots,c_n$.
We use the notation $f(\bm{c})=(f(c_1),\ldots,f(c_n))$ for $\bm{c}\in\R^n$
and a function $f:\R\to\R$. The expression $|\na f(\bm{c})|^2$ is defined by
$\sum_{i=1}^n|\na f(c_i)|^2$ and $|\cdot|$ is the usual Euclidean norm.
The matrix $R(\bm{c})\in\R^{n\times n}$ 
is the diagonal matrix with elements $\sqrt{c_1},\ldots,\sqrt{c_n}$, i.e.\
$R_{ij}(\bm{c})=\sqrt{c_i}\delta_{ij}$ for $i,j=1,\ldots,n$, where
$\delta_{ij}$ denotes the Kronecker delta.
We understand by $\na\bm{\mu}$ the matrix with entries $\pa_{x_i}\mu_j$.
Furthermore, $C>0$, $C_i>0$ are generic constants 
with values changing from line to line.


\section{Properties of the mobility matrix and a priori estimates}\label{sec.mobil}

We wish to express the fluxes $c_iu_i$ as a linear combination of the gradients
of the chemical potentials. Since $K(\bm{c})$ has a nontrivial kernel, 
we need to use a generalized matrix inverse, the Bott--Duffin inverse.
This inverse and its properties are studied in Section \ref{sec.BD}. The properties
allow us to derive in Section \ref{sec.apriori} some a priori estimates 
for the Maxwell--Stefan--Cahn--Hilliard system.

\subsection{The Bott--Duffin inverse}\label{sec.BD}

We wish to invert \eqref{1.eq2} or, equivalently, \eqref{1.D}. 
We recall definition \eqref{1.PL} of the projection matrices 
$P_L(\bm{c})\in\R^{n\times n}$ on $L(\bm{c})$ and 
$P_{L^\perp}(\bm{c})\in\R^{n\times n}$ on $L^\perp(\bm{c})$,
where $L(\bm{c})$ and $L^\perp(\bm{c})$ are defined in \eqref{1.L}.
Then \eqref{1.D} is equivalent to the problem:
\begin{equation}\label{2.Dz}
  \mbox{Solve}\quad D(\bm{c})\bm{z} = -P_L(\bm{c})R(\bm{c})\na\bm{\mu}
	\quad\mbox{in the space }\bm{z}\in L(\bm{c}),
\end{equation}
where $z_i=\sqrt{c_i}u_i$, recalling that 
$R(\bm{c})=\operatorname{diag}(\sqrt{\bm{c}})$.

\begin{lemma}[Solution of \eqref{2.Dz}]\label{lem.Dz}
Suppose that $D(\bm{c})$ satisfies Assumption (B1). The Bott--Duffin inverse
$$
  D^{BD}(\bm{c}) = P_L(\bm{c})\big(D(\bm{c})P_L(\bm{c})+P_{L^\perp}(\bm{c})\big)^{-1}
$$
is well-defined, symmetric, and satisfies $\ker D^{BD}(\bm{c})=L^\perp(\bm{c})$.
Furthermore, for any $\bm{y}\in L(\bm{c})$, the linear problem
$D(\bm{c})\bm{z}=\bm{y}$ for $\bm{z}\in L(\bm{c})$ has a unique solution
given by $\bm{z}=D^{BD}(\bm{c})\bm{y}$. 
\end{lemma}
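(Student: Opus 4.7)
The plan is to decompose $\R^n = L(\bm{c}) \oplus L^\perp(\bm{c})$ throughout and exploit Assumption (B1) to reduce every claim to a verification on each summand. I will abbreviate $M := D(\bm{c})P_L(\bm{c}) + P_{L^\perp}(\bm{c})$ and drop the $\bm{c}$ argument where it is unambiguous.

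\textbf{Step 1: Invertibility of $M$.} This is the crucial step and, I expect, the main obstacle (the remaining assertions are straightforward corollaries). I will show $\ker M = \{0\}$. Given $\bm{x}$, write $\bm{x} = \bm{x}_L + \bm{x}_{L^\perp}$ with $\bm{x}_L = P_L \bm{x} \in L$ and $\bm{x}_{L^\perp} = P_{L^\perp}\bm{x} \in L^\perp$. Then $M\bm{x} = D\bm{x}_L + \bm{x}_{L^\perp}$. Because $\operatorname{ran} D = L$ by (B1), the vector $D\bm{x}_L$ lies in $L$, while $\bm{x}_{L^\perp} \in L^\perp$; hence $M\bm{x}=0$ forces separately $\bm{x}_{L^\perp}=0$ and $D\bm{x}_L = 0$. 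But $D\bm{x}_L = D P_L\bm{x}$, so $\bm{x} \in \ker(DP_L) = L^\perp$ by (B1), which combined with $\bm{x}_{L^\perp}=0$ gives $\bm{x}=0$. Thus $M$ is invertible and $D^{BD} = P_L M^{-1}$ is well-defined.

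\textbf{Step 2: Symmetry and kernel of $D^{BD}$.} For symmetry I will use the identity
\begin{equation*}
  P_L M = P_L(DP_L + P_{L^\perp}) = P_L D P_L,
\end{equation*}
which is symmetric. Setting $\bm{x} = M^{-1}\bm{y}$, $\bm{v} = M^{-1}\bm{w}$ gives
\begin{equation*}
  \langle D^{BD}\bm{y},\bm{w}\rangle = \langle P_L M^{-1}\bm{y},\bm{w}\rangle
  = \langle M^{-1}\bm{y}, P_L\bm{w}\rangle = \langle \bm{x}, P_L M\bm{v}\rangle
  = \langle \bm{x}, P_L D P_L \bm{v}\rangle,
\end{equation*}
and the right side is symmetric in $(\bm{x},\bm{v})$, i.e.\ in $(\bm{y},\bm{w})$. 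For the kernel, note that if $\bm{y}\in L^\perp$ then $M\bm{y} = DP_L\bm{y} + P_{L^\perp}\bm{y} = 0 + \bm{y} = \bm{y}$, so $M^{-1}\bm{y} = \bm{y}$ and $D^{BD}\bm{y} = P_L \bm{y} = 0$; hence $L^\perp \subseteq \ker D^{BD}$. Conversely, if $D^{BD}\bm{y} = P_L M^{-1}\bm{y} = 0$, then $M^{-1}\bm{y} \in L^\perp$, and applying $M$ (which acts as the identity on $L^\perp$) yields $\bm{y} \in L^\perp$.

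\textbf{Step 3: Solving $D\bm{z} = \bm{y}$ in $L(\bm{c})$.} Uniqueness follows because any difference $\bm{z}_1-\bm{z}_2 \in L$ with $D(\bm{z}_1-\bm{z}_2)=0$ lies in $\ker(DP_L) = L^\perp$ and hence vanishes. For existence I set $\bm{z} = D^{BD}\bm{y}$, which automatically belongs to $L$ since it is the image of $P_L$. To check $D\bm{z}=\bm{y}$, write $\bm{x} = M^{-1}\bm{y} = \bm{x}_L + \bm{x}_{L^\perp}$, so that $\bm{y} = M\bm{x} = D\bm{x}_L + \bm{x}_{L^\perp}$; because $\bm{y} \in L$ and $D\bm{x}_L \in L$, the $L^\perp$-component forces $\bm{x}_{L^\perp}=0$ and therefore $\bm{y} = D\bm{x}_L = D P_L \bm{x} = D\bm{z}$, completing the proof.
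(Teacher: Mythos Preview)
Your proof is correct. The paper does not supply its own argument here; it cites \cite[Lemma~17]{HJT21} and only adds the one-line remark that $\ker D^{BD}(\bm{c})=\ker P_L(\bm{c})=L^\perp(\bm{c})$, which is exactly the content of your kernel computation in Step~2. Your self-contained decomposition argument (splitting along $L\oplus L^\perp$ and using (B1) for both $\ran D=L$ and $\ker(DP_L)=L^\perp$) is the standard route and matches what one finds in the referenced lemma, so there is no meaningful methodological difference to discuss.
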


We refer to \cite[Lemma 17]{HJT21} for the proof. The property for the kernel
follows from $\ker D^{BD}(\bm{c})=\ker P_L(\bm{c})=L^\perp(\bm{c})$.
Since $P_L(\bm{c})R(\bm{c})\na\bm{\mu}\in L(\bm{c})$
(this follows from the definition of $P_L(\bm{c})$ and $\sum_{i=1}^n c_i=1$),
we infer from Lemma \ref{lem.Dz} that \eqref{2.Dz} has the unique solution
$\bm{z}=-D^{BD}(\bm{c})P_L(\bm{c})R(\bm{c})\na\bm{\mu}\in L(\bm{c})$ or,
componentwise,
$$
  c_iu_i = \sqrt{c_i}z_i 
	= -\sum_{j=1}^n \sqrt{c_i}\big(D^{BD}(\bm{c})P_L(\bm{c})\big)_{ij}\sqrt{c_j}\na\mu_j 
	= -\sum_{j=1}^n \sqrt{c_i}D^{BD}(\bm{c})_{ij}\sqrt{c_j}\na\mu_j
$$
for $i=1,\ldots,n$, where the last equality follows from 
$D^{BD}(\bm{c})P_L(\bm{c})=D^{BD}(\bm{c})$; see \cite[(81)]{HJT21}.
Then we can formulate equation \eqref{1.eq1} as
\begin{equation}\label{2.B}
  \pa_t c_i = \diver\sum_{j=1}^n B_{ij}(\bm{c})\na\mu_j, \quad
	\mbox{where }B_{ij}(\bm{c})=\sqrt{c_i}D^{BD}_{ij}(\bm{c})\sqrt{c_j},
	\quad i,j=1,\ldots,n.
\end{equation}
The boundary conditions $c_iu_i\cdot\nu=0$ on $\pa\Omega$ yield
\begin{equation}\label{2.bc}
  \sum_{j=1}^n B_{ij}(\bm{c})\na\mu_j\cdot\nu = 0\quad\mbox{on }\pa\Omega,\ t>0,\
	i=1,\ldots,n.
\end{equation}

We recall some properties of the Bott--Duffin inverse.

\begin{lemma}[Properties of $D^{BD}(\bm{c})$]\label{lem.DB}
Suppose that $D(\bm{c})\in\R^{n\times n}$ satisfies Assumptions (B1)--(B4). Then:
\begin{itemize}
\item[\rm (i)] The coefficients $D^{BD}_{ij}\in C^1([0,1]^n)$ are bounded for
$i,j=1,\ldots,n$.
\item[\rm (ii)] Let $\lambda(\bm{c})$ be an eigenvalue of
$(D(\bm{c})P_L(\bm{c})+P_{L^\perp}(\bm{c}))^{-1}$. Then
$\evmin\le\lambda(\bm{c})\le\evmax$, where
$$
  \evmin = (1+n\|D(\bm{c})\|_F)^{-1}, \quad
	\evmax = \max\{1,\rho^{-1}\},
$$
$\|\cdot\|_F$ is the Frobenius matrix norm, and 
$\rho>0$ is a lower bound for the eigenvalues of $D(\bm{c})$; see Assumption (B3).
\item[\rm (iii)] The functions $\bm{c}\mapsto \sqrt{c_i}D^{BD}_{ij}(\bm{c})/\sqrt{c_j}$
are bounded in $[0,1]^n$ for $i,j=1,\ldots,n$.
\end{itemize}
\end{lemma}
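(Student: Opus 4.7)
My approach treats the three parts in sequence, with part (iii) being the principal obstacle.

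For part (i), the key observation is that the symmetry of $D(\bm{c})$ combined with (B1) forces $\ker D = (\ran D)^\perp = L^\perp$, hence $D(\bm{c})P_L(\bm{c}) = P_L(\bm{c})D(\bm{c}) = D(\bm{c})$. Therefore
$$
M(\bm{c}) := D(\bm{c})P_L(\bm{c}) + P_{L^\perp}(\bm{c}) = D(\bm{c}) + P_{L^\perp}(\bm{c})
$$
is symmetric and block-diagonal in the orthogonal decomposition $\R^n = L(\bm{c})\oplus L^\perp(\bm{c})$, acting as $D|_{L(\bm{c})}$ (invertible by (B1)) on $L(\bm{c})$ and as the identity on $L^\perp(\bm{c})$. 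Invertibility of $M(\bm{c})$ on all of $[0,1]^n$ follows, and smoothness and boundedness of $M^{-1}$, hence of $D^{BD} = P_L M^{-1}$, follow from Cramer's rule together with (B2) and compactness of $[0,1]^n$. Part (ii) is then immediate from the same block structure: the eigenvalues of $M$ are $\{1\}$ (from the $L^\perp$ block) together with the nonzero eigenvalues of $D$ (from $D|_L$), which by (B3) lie in $[\rho,\|D\|_F]$. Inverting and using the elementary inequality $\max(1,\|D\|_F)\le 1 + n\|D\|_F$ produces the claimed bounds $\lambda_m\le\lambda(\bm{c})\le\lambda_M$.

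Part (iii) is the main work. The starting point is the identity $D^{BD} = M^{-1} - P_{L^\perp}$, a direct consequence of the block decomposition above, which yields
$$
\frac{\sqrt{c_i}D^{BD}_{ij}(\bm{c})}{\sqrt{c_j}} = \frac{\sqrt{c_i}(M^{-1}(\bm{c}))_{ij}}{\sqrt{c_j}} - c_i,
$$
so it suffices to bound the entries of $\tilde M(\bm{c}) := R(\bm{c})M^{-1}(\bm{c})R(\bm{c})^{-1}$. On the interior of $[0,1]^n$, the conjugation gives $RMR^{-1} = K + \bm{c}\bm{1}^T$, using $K = RDR^{-1}$ and the computation $RP_{L^\perp}R^{-1} = \bm{c}\bm{1}^T$; hence $\tilde M = (K + \bm{c}\bm{1}^T)^{-1}$ in the interior. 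Using the structural identities $K\bm{c}=0$ (from $D\sqrt{\bm{c}}=0$) and $\bm{1}^T K = 0$ (from the assumption $\sum_i K_{ij}=0$), a direct block computation yields $\tilde M - \bm{c}\bm{1}^T = K^{\dagger}(I - \bm{c}\bm{1}^T)$, where $K^{\dagger}$ is the inverse of the restriction $K\colon \{\bm{1}^T\bm{x}=0\}\to\{\bm{1}^T\bm{y}=0\}$. The principal obstacle is that $K$ loses rank as $\bm{c}$ approaches the boundary $\{c_i=0\}$, so that $K^\dagger$ itself need not be bounded there. To overcome this, I would show that the columns $\bm{e}_j-\bm{c}$ of $I-\bm{c}\bm{1}^T$ always lie in the well-conditioned part of $\ran K$: the constraint $\sum_i K_{ij}=0$ is precisely what guarantees that $\bm{e}_j-\bm{c}\in\ran K$ up to the boundary, and the resulting solvability bound depends only on $\rho$, $n$, and $\|K(\bm{c})\|_F$ via (B3)--(B4). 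Continuity of $\tilde M$ on $[0,1]^n$, together with compactness, then yields the uniform estimate in (iii).
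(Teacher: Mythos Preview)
Your treatment of (i) and (ii) is correct and essentially coincides with the argument the paper cites from \cite{HJT21}: the block-diagonal structure of $M=D+P_{L^\perp}$ on $L(\bm{c})\oplus L^\perp(\bm{c})$ yields invertibility with the stated eigenvalue bounds, and Cramer's rule together with (B2) and compactness gives $C^1$ regularity and boundedness of $D^{BD}$.

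For (iii), you correctly reduce the problem to bounding the entries of $\tilde M(\bm{c}) = (K(\bm{c})+\bm{c}\,\bm{1}^T)^{-1}$ on the interior of $[0,1]^n$; this is also where the paper arrives. But you then rewrite $\tilde M-\bm{c}\,\bm{1}^T = K^\dagger(I-\bm{c}\,\bm{1}^T)$ and assert, without proof, that $K^\dagger(\bm{e}_j-\bm{c})$ stays bounded up to the boundary. This is the gap. The matrix $K=RDR^{-1}$ is not symmetric, so the eigenvalue bound $\rho$ from (B3) --- which does transfer to $K$ by similarity --- does \emph{not} translate into a singular-value or operator-norm bound on $K^\dagger$. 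Your phrase ``well-conditioned part of $\ran K$'' is not given a meaning, and the sentence ``the resulting solvability bound depends only on $\rho$, $n$, and $\|K(\bm c)\|_F$'' is a restatement of the claim, not an argument. Indeed the most direct route to bounding $K^\dagger$ passes through $\tilde M$ itself, which would be circular.

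The paper closes the argument in one step from $(K+\bm{c}\,\bm{1}^T)^{-1}$, without ever introducing $K^\dagger$: by Cramer's rule this inverse equals $\operatorname{adj}(K+\bm{c}\,\bm{1}^T)/\det(K+\bm{c}\,\bm{1}^T)$. The entries of the adjugate are polynomials in the bounded entries of $K+\bm{c}\,\bm{1}^T$ (by (B4)), while the determinant satisfies
\[
\det(K+\bm{c}\,\bm{1}^T)=\det\big(R(DP_L+P_{L^\perp})R^{-1}\big)=\det(DP_L+P_{L^\perp})\ge\rho^{n-1},
\]
since conjugation by $R$ preserves determinants and the \emph{symmetric} matrix $DP_L+P_{L^\perp}$ has all eigenvalues $\ge\min(1,\rho)$ by (B3). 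Thus your detour through $K^\dagger$ is unnecessary: the determinant identity is the missing ingredient, and it exploits precisely that $K+\bm{c}\,\bm{1}^T$, although non-symmetric, is similar to a symmetric matrix with a uniform spectral gap.
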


A consequence of (ii) are the inequalities
\begin{equation}\label{2.DBD}
  \evmin|P_L(\bm{c})\bm{z}|^2 \le \bm{z}^T D^{BD}(\bm{c})\bm{z}
	\le \evmax|P_L(\bm{c})\bm{z}|^2 \quad\mbox{for }\bm{z}\in\R^n.
\end{equation}
Note that the Frobenius norm of $D(\bm{c})$ is bounded uniformly in $\bm{c}\in[0,1]^n$,
since $D_{ij}$ is bounded by Assumption (B1).

\begin{proof} The points (i) and (ii) are proved in \cite[Lemma 11]{HJT21}
in an interval $[m,1]^n$ for some $m>0$. In fact, 
we can conclude (i)--(ii) in the full interval $[0,1]^n$, 
since our Assumptions (B2)--(B3) are stronger than those in \cite{HJT21}.

For the proof of (iii),
dropping the argument $\bm{c}$ and observing that $RDR^{-1}=K$, we obtain
\begin{align*}
  R D^{BD}R^{-1} &= RP_L(DP_L+P_{L^\perp})^{-1}R^{-1}
	= RP_L(R^{-1}R)(DP_L+P_{L^\perp})^{-1}R^{-1} \\
	&= RP_LR^{-1}\big(R(DP_L+P_{L^\perp})R^{-1}\big)^{-1} \\
	&= RP_LR^{-1}\big(RDR^{-1}RP_LR^{-1}+RP_{L^\perp}R^{-1}\big)^{-1} \\
	&= RP_LR^{-1}\big(KRP_LR^{-1}+RP_{L^\perp}R^{-1}\big)^{-1}.
\end{align*}
The determinant of the expression in the brackets equals
$$
  \det\big(R(DP_L+P_{L^\perp})R^{-1}\big) = \det(DP_L+P_{L^\perp}).
$$
Therefore, denoting by ``adj'' the adjugate matrix, it follows that
\begin{equation}\label{2.RDR}
  R D^{BD}R^{-1} = \frac{RP_LR^{-1}\operatorname{adj}(KRP_LR^{-1}+RP_{L^\perp}R^{-1})}{
	\det(DP_L+P_{L^\perp})}.
\end{equation}
By Assumption (B3), the eigenvalues of $D$ are not smaller than $\rho>0$.
The proof of \cite[Lemma 11]{HJT21} shows that the eigenvalues of
$DP_L+P_{L^\perp}$ are not smaller than $\rho>0$, too. This implies that
$\det(DP_L+P_{L^\perp})\ge\rho^{n-1}>0$. The coefficients 
$$
  (RP_LR^{-1})_{ij} = \delta_{ij}-c_i, \quad (RP_{L^\perp}R^{-1})_{ij} = c_i
$$
are bounded for $\bm{c}\in[0,1]^n$ and, by Assumption (B4), the coefficients of
$K$ are also bounded. Therefore, all elements of 
$\operatorname{adj}(KRP_LR^{-1}+RP_{L^\perp}R^{-1})$ are bounded.
We conclude from \eqref{2.RDR} that the entries of $RD^{BD}R^{-1}$ are
bounded in $[0,1]^n$, i.e., point (iii) holds.
\end{proof}

The most important property is the positive definiteness of $D^{BD}(\bm{c})$
on $L(\bm{c})$; see \eqref{2.DBD}. This property implies the a priori estimates
proved in the following subsection.

\subsection{A priori estimates}\label{sec.apriori}

We show an energy inequality for smooth solutions.

\begin{lemma}[Free energy inequality]\label{lem.fei}
Let $\bm{c}\in C^\infty(\Omega\times(0,\infty);\R^n)$ be a positive, bounded, smooth
solution to \eqref{1.eq1}--\eqref{1.mu}. Then, for any 
$0<\lambda<\evmin$,
\begin{align*}
  \frac{d}{dt}\bigg(\H(\bm{c}) + \frac{(\evmax-\lambda)^2}{\evmin\lambda}
	\E(\bm{c})\bigg) &+ 2\lambda\int_\Omega|\na\sqrt{\bm{c}}|^2 dx
	+ \lambda\int_\Omega|\Delta\bm{c}|^2 dx \\
	&{}+ \frac{(\lambda_M-\lambda)^2}{2\lambda}\int_\Omega
	|P_L(\bm{c})R(\bm{c})\na\bm{\mu}|^2dx \le 0.
\end{align*}
where the entropy $\H(\bm{c})$ and the free energy $\E(\bm{c})$ are given by
\eqref{1.HE} and $\evmin$, $\evmax$ are defined in Lemma \ref{lem.DB}.
\end{lemma}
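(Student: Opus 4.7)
The plan is to test the formulation \eqref{2.B} against the two natural multipliers $\mu_i$ and $\log c_i$, extract the $D^{BD}$-bilinear dissipation structure (using $B_{ij}=\sqrt{c_i}D^{BD}_{ij}\sqrt{c_j}$), and then combine the resulting energy and entropy identities with carefully chosen weights so that an ``intermediate'' gradient term cancels and the Laplacian dissipation is preserved. Setting $A_i:=\sqrt{c_i}\na\mu_i$ and $V_i:=\sqrt{c_i}\na\log c_i = 2\na\sqrt{c_i}$, integration by parts using the no-flux condition \eqref{2.bc} yields
$$
\frac{d\E}{dt}(\bm c) = -\int_\Omega A^T D^{BD}(\bm c)\, A\, dx,
\qquad
\frac{d\H}{dt}(\bm c) = -\int_\Omega V^T D^{BD}(\bm c)\, A\, dx.
$$
The saturation constraint $\sum_i c_i=1$ forces $V\in L(\bm c)$ pointwise, so $P_L(\bm c)V=V$ and in particular $V^T P_L(\bm c) A = V\cdot A$.

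The algebraic identity that brings the Laplacian into the left-hand side is
$$
\int_\Omega V\cdot A\, dx = \sum_{i=1}^n\int_\Omega\na c_i\cdot\na\mu_i\, dx = 4\int_\Omega|\na\sqrt{\bm c}|^2 dx + \int_\Omega|\Delta\bm c|^2 dx,
$$
obtained from $V_i\cdot A_i=\na c_i\cdot\na\mu_i$, the splitting $\na\mu_i=\na\log c_i-\na\Delta c_i$, the relation $\na c_i\cdot\na\log c_i=4|\na\sqrt{c_i}|^2$, and integration by parts of $\int_\Omega\na c_i\cdot\na\Delta c_i\, dx$, whose boundary term vanishes by $\na c_i\cdot\nu=0$.

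I then decompose $D^{BD}(\bm c)=\lambda P_L(\bm c)+(D^{BD}(\bm c)-\lambda P_L(\bm c))$. The first piece contributes exactly $\lambda V\cdot A$ to $V^T D^{BD}A$. Since $D^{BD}$ annihilates $L^\perp(\bm c)$, only $P_L A$ is felt by the remainder; and on $L(\bm c)$ the symmetric operator $D^{BD}-\lambda P_L$ is positive, its eigenvalues lying in $[\evmin-\lambda,\evmax-\lambda]$, all positive by the hypothesis $\lambda<\evmin$. A Cauchy--Schwarz inequality for this positive form, followed by Young's inequality at the sharp parameter $\alpha=\lambda/(\evmax-\lambda)$, gives
$$
|V^T(D^{BD}-\lambda P_L)A|\le\frac{\lambda}{2}|V|^2+\frac{(\evmax-\lambda)^2}{2\lambda}|P_L A|^2.
$$
Combining with the previous identity and using $\tfrac{\lambda}{2}|V|^2=2\lambda|\na\sqrt{\bm c}|^2$ produces the intermediate entropy inequality
$$
\frac{d\H}{dt}+2\lambda\int_\Omega|\na\sqrt{\bm c}|^2 dx+\lambda\int_\Omega|\Delta\bm c|^2 dx\le\frac{(\evmax-\lambda)^2}{2\lambda}\int_\Omega|P_L(\bm c)R(\bm c)\na\bm\mu|^2 dx.
$$

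To close the argument I use $A^T D^{BD}A\ge\evmin|P_L A|^2$ (cf.\ \eqref{2.DBD}) in the energy identity, multiply by $\kappa:=(\evmax-\lambda)^2/(\evmin\lambda)$, and add to the intermediate inequality above. The resulting coefficient of $\int|P_L R\na\bm\mu|^2$ on the right becomes exactly $-(\evmax-\lambda)^2/(2\lambda)$, which when moved to the left gives the stated conclusion. The key difficulty is devising the decomposition of $D^{BD}$ with the free parameter $\lambda<\evmin$, together with the identity for $\int_\Omega V\cdot A\, dx$, that together force the Laplacian dissipation to appear on the left-hand side; the remaining steps are algebraic bookkeeping.
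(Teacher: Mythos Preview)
Your proof is correct and follows essentially the same strategy as the paper: the decomposition $D^{BD}(\bm c)=\lambda P_L(\bm c)+(D^{BD}(\bm c)-\lambda P_L(\bm c))$ is exactly the conjugate (by $R(\bm c)$) of the paper's splitting $B=\lambda G+M$ with $G=RP_LR$, and your identity $\int_\Omega V\cdot A\,dx=4\int|\na\sqrt{\bm c}|^2+\int|\Delta\bm c|^2$ is precisely the paper's computation of $I_2$. The only cosmetic difference is that you work throughout in the $\sqrt{\bm c}$-weighted frame (with $D^{BD}$, $P_L$, $V$, $A$), whereas the paper stays in the original variables $(B,G,M)$; the Cauchy--Schwarz/Young step you perform on the remainder reproduces the paper's estimate of $I_1$ with the same choice $\theta=\lambda/(\evmax-\lambda)$.
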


\begin{proof}
We derive first the energy inequality. To this end, we multiply equation
\eqref{2.B} for $c_i$ by $\mu_i=(\pa\E/\pa c_i)(\bm{c})$, integrate over $\Omega$,
integrate by parts (using the boundary conditions \eqref{2.bc}), and take
into account the lower bound \eqref{2.DBD} for $D^{BD}(\bm{c})$:
\begin{align}\label{2.dEdt}
  \frac{d\E}{dt}(\bm{c}) &= \sum_{i=1}^n\int_\Omega\frac{\pa\E}{\pa c_i}(\bm{c})
	\pa_t c_i dx 
	= -\sum_{i,j=1}^n\int_\Omega B_{ij}(\bm{c})\na\mu_i\cdot\na\mu_j dx \\
	&= -\sum_{i,j=1}^n D_{ij}^{BD}(\bm{c})(\sqrt{c_i}\na\mu_i)\cdot
	(\sqrt{c_j}\na\mu_j)dx
	\le -\evmin\int_\Omega|P_L(\bm{c})R(\bm{c})\na\bm{\mu}|^2 dx. \nonumber
\end{align}

The entropy inequality is derived by multiplying \eqref{2.B} by $\log c_i$, 
integrating over $\Omega$, and integrating by parts (using the boundary conditions 
\eqref{2.bc}):
\begin{equation*}
  \frac{d\H}{dt}(\bm{c}) 
	= \sum_{i=1}^n\int_\Omega(\log c_i)\pa_t c_i dx
	= -\sum_{i,j=1}^n\int_\Omega B_{ij}(\bm{c})\na\log c_i
	\cdot\na\mu_j dx.
\end{equation*}
To estimate the right-hand side, we
set $G=RP_LR$ (omitting the argument $\bm{c}$) and $M:=B-\lambda G$ for 
$\lambda\in(0,\evmin)$. Then
\begin{equation}\label{2.dHdt}
  \frac{d\H}{dt}(\bm{c}) = -\sum_{i,j=1}^n\int_\Omega M_{ij}\na\log c_i
	\cdot\na\mu_j dx - \lambda\sum_{i,j=1}^n\int_\Omega G_{ij}\na\log c_i
	\cdot\na\mu_j dx =: I_1 + I_2.
\end{equation}

Before estimating the integrals $I_1$ and $I_2$, we start with some preparations.
We use Lemma \ref{lem.DB} (ii) and $P_L^TP_L=P_L$ to obtain
$$
  \bm{z}^TB\bm{z} = (R\bm{z})^T D^{BD}R\bm{z} \ge \evmin|P_LR\bm{z}|^2
	= \evmin(P_LR\bm{z})^T (P_LR\bm{z}) = \evmin\bm{z}^T G\bm{z}\quad\mbox{for }
	\bm{z}\in\R^n.
$$
The matrix $M$ is positive semidefinite since for any $\bm{z}\in\R^n$,
\begin{equation}\label{psd.zMz}
  \bm{z}^T M\bm{z} = \bm{z}^TB\bm{z} - \lambda\bm{z}^T G\bm{z}
	\ge (\evmin-\lambda)\bm{z}^T G\bm{z} = (\evmin-\lambda)|P_LR\bm{z}|^2.
\end{equation}
Furthermore, by Lemma \ref{lem.DB} (ii) again, we have the upper bound
\begin{equation}\label{3.zMz}
  \bm{z}^T M\bm{z} = \bm{z}^T(B-\lambda G)\bm{z}
	\le (\evmax-\lambda)\bm{z}^TG\bm{z} = (\evmax-\lambda)|P_LR\bm{z}|^2.
\end{equation}

We are now in the position to estimate the integral $I_1$, 
using Young's inequality for any $\theta>0$:
\begin{align*}
  I_1 &\le \frac{\theta}{2}\sum_{i,j=1}^n\int_\Omega M_{ij}\na\log c_i
	\cdot\na\log c_j dx + \frac{1}{2\theta}\sum_{i,j=1}^n\int_\Omega
	M_{ij}\na\mu_i\cdot\na\mu_j dx \\
	&\le \frac{\theta}{2}(\evmax-\lambda)\int_\Omega|P_LR\na\log\bm{c}|^2 dx
	+ \frac{\evmax-\lambda}{2\theta}\int_\Omega|P_LR\na\bm{\mu}|^2 dx \\
	&= 2\theta(\evmax-\lambda)\int_\Omega|\na\sqrt{\bm{c}}|^2 dx
	+ \frac{\evmax-\lambda}{2\theta}\int_\Omega|P_LR\na\bm{\mu}|^2 dx,
\end{align*}
where the last step follows from $\sum_{j=1}^n (P_L)_{ij}R_j\na\log c_j
=2\na\sqrt{c_i}$, which is a consequence of $\sum_{j=1}^n\na c_j=0$.
For the integral $I_2$, we use the definitions $G_{ij} = c_i\delta_{ij}-c_ic_j$ and
$\mu_j=\log c_j-\Delta c_j$:
\begin{align*}
  I_2 &= -\lambda\sum_{i,j=1}^n\int_\Omega (c_i\delta_{ij}-c_ic_j)
	\frac{\na c_i}{c_i}\cdot\na(\log c_j-\Delta c_j) dx \\
	&= -\lambda\sum_{i=1}^n\int_\Omega\na c_i\cdot\na(\log c_i-\Delta c_i) dx
	+ \lambda\int_\Omega\sum_{i=1}^n \na c_i\cdot\sum_{j=1}^n 
	c_j\na(\log c_j-\Delta c_j)dx \\
	&= -\lambda\sum_{i=1}^n\int_\Omega\na c_i\cdot\na(\log c_i-\Delta c_i) dx
	= -\lambda\int_\Omega\big(4|\na\sqrt{\bm{c}}|^2 + |\Delta\bm{c}|^2\big)dx,
\end{align*}
where we integrated by parts in the last step.

Inserting the estimates for $I_1$ and $I_2$ into \eqref{2.dHdt} yields
\begin{align*}
  \frac{d\H}{dt}(\bm{c}) &+ 4\lambda\int_\Omega|\na\sqrt{\bm{c}}|^2 dx
	+ \lambda\int_\Omega|\Delta\bm{c}|^2 dx \\
	&\le 2\theta(\evmax-\lambda)\int_\Omega|\na\sqrt{\bm{c}}|^2 dx
	+ \frac{\evmax-\lambda}{2\theta}\int_\Omega|P_LR\na\bm{\mu}|^2 dx.
\end{align*}
We set $\theta=\lambda/(\evmax-\lambda)$ to conclude that
\begin{equation}\label{2.dHdt2}
  \frac{d\H}{dt}(\bm{c}) + 2\lambda\int_\Omega|\na\sqrt{\bm{c}}|^2 dx
	+ \lambda\int_\Omega|\Delta\bm{c}|^2 dx
	\le \frac{(\evmax-\lambda)^2}{2\lambda}\int_\Omega|P_LR\na\bm{\mu}|^2 dx.
\end{equation}
The right-hand side can be absorbed by the corresponding term in \eqref{2.dEdt}.
Indeed, adding the previous inequality to \eqref{2.dEdt} times
$(\evmax-\lambda)^2/(\evmin\lambda)$ finishes the proof.
\end{proof}

Note that the energy inequality \eqref{2.dEdt} or the entropy inequality
\eqref{2.dHdt2} alone are not sufficient to control the derivatives of $\bm{c}$ but
only a suitable linear combination. We will prove these inequalities rigorously
in the following section for weak solutions; see Lemma \ref{lem.eei}.


\section{Proof of Theorem \ref{thm.ex}}\label{sec.ex}

We prove the existence of global weak solutions to \eqref{1.eq1}--\eqref{1.bic}.
For this, we construct an approximate system depending on a parameter $\delta>0$,
similarly as in \cite{ElGa97}, and then pass to the limit $\delta\to 0$.

\subsection{An approximate system}\label{sec.approx}

In order to deal with the degeneracy of the matrix $B(\bm{c})$ when a component
of $\bm{c}$ vanishes, we introduce the cutoff function $\chi_\delta:\R^n\to\R^n$ by
$$
  (\chi_\delta\bm{c})_i := \left\{\begin{array}{ll}
	\delta &\quad\mbox{for }c_i<\delta, \\
	c_i &\quad\mbox{for }\delta\le c_i\le 1-\delta, \\
	1-\delta &\quad\mbox{for }c_i>1-\delta,
	\end{array}\right.
$$
and define the approximate matrix
\begin{equation}\label{3.Bdelta}
  B^\delta(\bm{c}) := R(\chi_\delta\bm{c})D^{BD}(\chi_\delta\bm{c})R(\chi_\delta\bm{c}),
\end{equation}
recalling that $R(\chi_\delta\bm{c})=\operatorname{diag}(\sqrt{\chi_\delta\bm{c}})$.
We wish to solve the approximate problem
\begin{align}
  & \pa_t c_i^\delta = \diver\sum_{j=1}^n B_{ij}^\delta(\bm{c}^\delta)\na\mu_j^\delta,
	\quad \mu_j^\delta = \frac{\pa\E^\delta}{\pa c_j}(\bm{c}^\delta)
	\quad\mbox{in }\Omega,\ t>0, \label{3.eq} \\
  & c_i^\delta(\cdot,0)=c_i^0 \quad\mbox{in }\Omega, \quad
	\sum_{j=1}^n B_{ij}^\delta(\bm{c}^\delta)\na\mu_j^\delta\cdot\nu = 0,\
	\na c_i^\delta\cdot\nu=0	\quad\mbox{on }\pa\Omega, \label{3.bic}
\end{align}
where $i=1,\ldots,n$, $\sum_{i=1}^n c^0_i =1$ and the approximate energy is defined by
\begin{align}
  & \E^\delta(\bm{c}) := \H^\delta(\bm{c}) 
	+ \frac12\sum_{i=1}^n\int_\Omega|\na c_i|^2 dx,
	\quad \H^\delta(\bm{c}) := \sum_{i=1}^n\int_\Omega h_i^\delta(c_i)dx, \nonumber \\
	& h_i^\delta(r) = \left\{\begin{array}{ll}
	r\log\delta - \delta/2 + r^2/(2\delta) &\quad\mbox{for }r<\delta, \\
	r\log r &\quad\mbox{for }\delta\le r\le 1-\delta, \\
	r\log(1-\delta) - (1-\delta)/2 + r^2/(2(1-\delta)) 
	&\quad\mbox{for }r>1-\delta.
	\end{array}\right. \label{3.hdelta}
\end{align}
Observe that the solutions $c_i^\delta$ may be negative.
We will show below that $c_i^\delta$ converges to a nonnegative function as
$\delta\to 0$. The approximate entropy density is chosen in such a way that 
$h_i^\delta\in C^2(\R)$. Indeed, we obtain
$$
  (h_i^\delta)'(c_i) = \left\{\begin{array}{ll}
	\log\delta + c_i/\delta &\quad\mbox{for }c_i<\delta, \\
	\log c_i + 1 &\quad\mbox{for }\delta< c_i< 1-\delta, \\
	\log(1-\delta) + c_i/(1-\delta) &\quad\mbox{for }c_i>1-\delta,
	\end{array}\right. \quad
	(h_i^\delta)''(c_i) = \frac{1}{(\chi_\delta\bm{c})_i}.
$$
With these definitions, we obtain $\mu_i^\delta = (h_i^\delta)'(c_i^\delta)-\Delta
c_i^\delta$ for $i=1,\ldots,n$.

\begin{theorem}[Existence for the approximate system]\label{thm.approx}\quad
Let Assumptions (A1)--(A2) and (B1)--(B4) hold and let $\delta>0$. Then there
exists a weak solution $(\bm{c}^\delta,\bm{\mu}^\delta)$ to \eqref{3.eq}--\eqref{3.bic}
satisfying $\sum_{i=1}^n c_i^\delta(t)=1$ in $\Omega$, $t>0$,
\begin{align*}
  & c_i^\delta\in L_{\rm loc}^\infty(0,\infty;H^1(\Omega))\cap
	L_{\rm loc}^2(0,\infty;H^2(\Omega)), \\
	& \pa_t c_i\in L_{\rm loc}^2(0,\infty;H^2(\Omega)'), \quad
	\mu_i^\delta\in L_{\rm loc}^2(0,\infty;H^1(\Omega)), \quad i=1,\ldots,n,
\end{align*}
and the first equation in \eqref{3.eq} as well as the initial condition
in \eqref{3.bic} are satisfied in the sense of
$L_{\rm loc}^2(0,\infty;H^2(\Omega)')$.
\end{theorem}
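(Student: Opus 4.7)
The plan is to carry out, at fixed $\delta>0$, the standard time-discretization plus Leray--Schauder strategy used for fourth-order cross-diffusion systems, in the spirit of \cite{Jue16,ElGa97}. First I would propagate the constraint $\sum_{i=1}^n c_i^0=1$ and use it to reduce \eqref{3.eq}--\eqref{3.bic} to an $(n-1)$-component system by eliminating $c_n=1-\sum_{i<n}c_i$. Because $B^\delta(\bm{c})=R(\chi_\delta\bm{c})D^{BD}(\chi_\delta\bm{c})R(\chi_\delta\bm{c})$ is symmetric and its kernel contains the constant vector $(1,\ldots,1)^T$ (by Lemma \ref{lem.Dz} applied to $\chi_\delta\bm{c}$), the sum constraint is formally preserved. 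The cutoff guarantees $(\chi_\delta\bm{c})_i\in[\delta,1-\delta]$ and hence, by Lemma \ref{lem.DB}(ii), the resulting $(n-1)\times(n-1)$ reduced mobility matrix $\tilde B^\delta$ is symmetric positive definite with a lower bound depending on $\delta$ but uniform in $\bm{c}$; combined with $(h_i^\delta)''\ge 1/(1-\delta)$, this turns the reduced problem into a nondegenerate fourth-order parabolic system.

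I would then discretize in time by implicit Euler with step $\tau>0$ and add a higher-order elliptic regularization of the form $+\eps((-\Delta)^m c_i+c_i)$ inside $\mu_i$ (the one alluded to in \eqref{3.regul1}), with $m$ large enough that $H^m(\Omega)\hookrightarrow C^0(\overline{\Omega})$. Working with $(\bm{c},\bm{\mu})$ as simultaneous unknowns in an Elliott--Garcke-type formulation, I would apply the Leray--Schauder fixed-point theorem at each time step: freeze the nonlinear arguments in $B^\delta$ and $(h^\delta)'$, solve the resulting linear coercive system using positive definiteness of $\tilde B^\delta$, the convexity of $h^\delta$, and the $\eps$-regularization, and check the continuity and compactness of the fixed-point map by standard elliptic estimates. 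The a priori bound required for Leray--Schauder comes from testing with $\bm{\mu}^\delta$ and with $((h^\delta)'(\bm{c}^\delta))_i$, which produces discrete versions of the energy identity \eqref{2.dEdt} and the entropy identity leading to \eqref{2.dHdt2}; combining them exactly as in Lemma \ref{lem.fei} yields $\tau,\eps$-uniform bounds for $\bm{c}^\delta$ in $\ell^\infty(H^1)\cap\ell^2(H^2)$, for $\bm{\mu}^\delta$ in $\ell^2(H^1)$, and for the discrete time increments.

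Passing first $\eps\to 0$ and then $\tau\to 0$ via Aubin--Lions-type compactness produces a weak solution with the regularity claimed in Theorem \ref{thm.approx}; the sum constraint $\sum_{i=1}^n c_i^\delta=1$ transfers to the limit because it holds at each discrete step, and the initial condition is recovered in $H^2(\Omega)'$ from the uniform bound on $\pa_t\bm{c}^\delta$. The main obstacle is verifying the positive definiteness of $\tilde B^\delta$ uniformly in the fixed-point variable (which rests on the Bott--Duffin structure of Section \ref{sec.BD} and the removal of the degeneracy by $\chi_\delta$) and ensuring that it is the \emph{combined} energy-entropy estimate, rather than either one alone, that controls both the fourth-order term $\Delta\bm{c}^\delta$ and the gradient $\na\bm{\mu}^\delta$ in a way that survives the passages $\eps\to 0$ and $\tau\to 0$. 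Once these are in place, the remaining compactness and limit-passage arguments are routine.
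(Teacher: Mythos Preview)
Your outline is broadly correct and follows the same overall strategy as the paper: reduce to $n-1$ components, time-discretize, add an $\eps$-regularization, apply Leray--Schauder at each step, and pass to the limit via Aubin--Lions. Two technical choices differ from what the paper actually does, and in one place you overstate what is needed.

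First, the $\eps$-regularization in \eqref{3.regul1} is not placed inside $\mu_i$ as you propose; it is the term $-\eps(\Delta^2 w_i^k+w_i^k)$ added directly to the equation, written in the relative potentials $w_i=\mu_i-\mu_n$. With this choice, testing the equation with $\bm{w}$ produces $\eps\int((\Delta w)^2+w^2)$ directly (see \eqref{3.rfei}), which is what supplies the $H^2\hookrightarrow L^\infty$ compactness for the fixed-point map. The paper also introduces an auxiliary elliptic solution operator $\mathcal{L}:\bm{w}\mapsto\widetilde{\bm{c}}$ (Lemma~\ref{lem.solv}) and takes $\bm{w}$, not $\bm{c}$, as the fixed-point variable; this cleanly separates the semilinear elliptic relation \eqref{3.regul2} from the linear problem to which Lax--Milgram is applied.

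Second, the combined energy--entropy estimate is \emph{not} the main obstacle here. For Theorem~\ref{thm.approx}, $\delta>0$ is fixed, and the positive definiteness of $\widetilde B^\delta$ with the $\delta$-dependent lower bound $\eta(\delta)=\evmin\delta^2/n$ (Lemma~\ref{lem.Bdelta}) already gives, from the discrete energy inequality \eqref{3.rfei} alone, a $(\tau,\eps)$-uniform bound for $\na\bm{w}$ in $L^2$. The $H^2$ bound for $\bm{c}^{(\tau)}$ is then obtained by a separate, elementary test of \eqref{3.tau2} against $-\Delta c_i^{(\tau)}$ (Step~3 of the proof). The combined energy--entropy inequality enters only afterwards, in Lemma~\ref{lem.eei}, to manufacture estimates uniform in $\delta$; it is the key device for the limit $\delta\to 0$ in Theorem~\ref{thm.ex}, not for Theorem~\ref{thm.approx}. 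Your route would still work, but it imports machinery not required at this stage and obscures that the approximate problem is genuinely nondegenerate once $\delta$ is fixed.
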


Before we prove this theorem, we show some properties of the matrix 
$B^\delta(\bm{c})$. We introduce the matrices
$P_L(\chi_\delta\bm{c})$, $P_{L^\perp}(\chi_\delta\bm{c})\in\R^{n\times n}$ 
with entries
$$
  P_L(\chi_\delta\bm{c})_{ij} = \delta_{ij} - \frac{\sqrt{(\chi_\delta\bm{c})_i
	(\chi_\delta\bm{c})_j}}{\sum_{k=1}^n(\chi_\delta\bm{c})_k}, \quad
	P_{L^\perp}(\chi_\delta\bm{c})_{ij} = \frac{\sqrt{(\chi_\delta\bm{c})_i
	(\chi_\delta\bm{c})_j}}{\sum_{k=1}^n(\chi_\delta\bm{c})_k}, \quad i,j=1,\ldots,n.
$$

\begin{lemma}[Properties of $B^\delta(\bm{c})$]\label{lem.Bdelta}\
Suppose that $D(\bm{c})$ satisfies Assumptions (B1)--(B4). Then Lemmas
\ref{lem.Dz} and \ref{lem.DB} hold with $P_L(\bm{c})$, $P_{L^\perp}(\bm{c})$, and
$D^{BD}(\bm{c})$ replaced by $P_L(\chi_\delta\bm{c})$, 
$P_{L^\perp}(\chi_\delta\bm{c})$, and $D^{BD}(\chi_\delta\bm{c})$. As a consequence,
the matrix $B^\delta(\bm{c})$, defined in \eqref{3.Bdelta}, satisfies
\begin{equation}\label{3.zBdeltaz}
  \bm{z}^TB^\delta(\bm{c})\bm{z}\ge\evmin|P_L(\chi_\delta\bm{c})R(\chi_\delta\bm{c})
	\bm{z}|^2 \quad\mbox{for any }\bm{z},\bm{c}\in\R^n,
\end{equation}
and the first $(n-1)\times(n-1)$ submatrix $\widetilde{B}^\delta(\bm{c})$ of
$B^\delta(\bm{c})$ is positive definite and satisfies for 
$\eta(\delta)=\evmin\delta^2/n$,
\begin{equation}\label{3.tilde}
  \widetilde{\bm{z}}^T\widetilde{B}^\delta(\bm{c})\widetilde{\bm{z}}
	\ge \eta(\delta)|\widetilde{\bm{z}}|^2\quad\mbox{for any }\widetilde{\bm{z}}
	\in\R^{n-1}.
\end{equation}
\end{lemma}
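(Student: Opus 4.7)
The plan is to reduce every claim to the corresponding statement at the point $\chi_\delta\bm{c}$. Since $\chi_\delta$ maps $\R^n$ into $[\delta,1-\delta]^n\subset[0,1]^n$ componentwise, Assumptions (B1)--(B4) are satisfied at $\chi_\delta\bm{c}$, so Lemmas \ref{lem.Dz} and \ref{lem.DB} apply verbatim to $D^{BD}(\chi_\delta\bm{c})$, $P_L(\chi_\delta\bm{c})$, $P_{L^\perp}(\chi_\delta\bm{c})$; the only bookkeeping point is the normalizing denominator $\sum_k(\chi_\delta\bm{c})_k$ in the projections, which no longer equals one because $\chi_\delta\bm{c}$ is not constrained to the saturation simplex.

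For inequality \eqref{3.zBdeltaz} I would factor the quadratic form as
$$
\bm{z}^T B^\delta(\bm{c})\bm{z} = \big(R(\chi_\delta\bm{c})\bm{z}\big)^T D^{BD}(\chi_\delta\bm{c})\big(R(\chi_\delta\bm{c})\bm{z}\big)
$$
and apply the lower bound \eqref{2.DBD} at $\chi_\delta\bm{c}$. This is immediate from Lemma \ref{lem.DB}(ii) combined with the first part of the present lemma.

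The heart of the matter is the quantitative bound \eqref{3.tilde}. The qualitative picture is that $\ker B^\delta(\bm{c})=\operatorname{span}\{\bm{1}\}$: indeed, $R(\chi_\delta\bm{c})$ is invertible (all its diagonal entries are $\ge\sqrt{\delta}$), so $\bm{z}\in\ker B^\delta(\bm{c})$ forces $R(\chi_\delta\bm{c})\bm{z}\in\ker D^{BD}(\chi_\delta\bm{c})=\operatorname{span}\{\sqrt{\chi_\delta\bm{c}}\}=\operatorname{span}\{R(\chi_\delta\bm{c})\bm{1}\}$, hence $\bm{z}\parallel\bm{1}$. Embedding $\widetilde{\bm{z}}\in\R^{n-1}$ as $\bm{z}=(\widetilde{\bm{z}},0)\in\R^n$ therefore keeps $\bm{z}$ away from the kernel whenever $\widetilde{\bm{z}}\ne0$. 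To turn this into the stated lower bound, I would combine \eqref{3.zBdeltaz} with a direct computation: writing $\bm{s}=\sqrt{\chi_\delta\bm{c}}$ and using $P_L\bm{y}=\bm{y}-(\bm{y}\cdot\bm{s})\bm{s}/|\bm{s}|^2$,
$$
|P_L(\chi_\delta\bm{c})R(\chi_\delta\bm{c})\bm{z}|^2 = |R(\chi_\delta\bm{c})\bm{z}|^2 - \frac{\big(R(\chi_\delta\bm{c})\bm{z}\cdot\bm{s}\big)^2}{|\bm{s}|^2}.
$$
With $z_n=0$ one has $|R(\chi_\delta\bm{c})\bm{z}|^2=\sum_{i=1}^{n-1}(\chi_\delta\bm{c})_i\widetilde{z}_i^2$ and $R(\chi_\delta\bm{c})\bm{z}\cdot\bm{s}=\sum_{i=1}^{n-1}(\chi_\delta\bm{c})_i\widetilde{z}_i$, so Cauchy--Schwarz applied to the latter sum gives $(R(\chi_\delta\bm{c})\bm{z}\cdot\bm{s})^2\le(|\bm{s}|^2-(\chi_\delta\bm{c})_n)|R(\chi_\delta\bm{c})\bm{z}|^2$, hence
$$
|P_L(\chi_\delta\bm{c})R(\chi_\delta\bm{c})\bm{z}|^2 \ge \frac{(\chi_\delta\bm{c})_n}{|\bm{s}|^2}|R(\chi_\delta\bm{c})\bm{z}|^2 \ge \frac{\delta}{n}\cdot\delta|\widetilde{\bm{z}}|^2 = \frac{\delta^2}{n}|\widetilde{\bm{z}}|^2,
$$
using $(\chi_\delta\bm{c})_i\ge\delta$ for every $i$ and $|\bm{s}|^2=\sum_k(\chi_\delta\bm{c})_k\le n$. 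Inserting this into \eqref{3.zBdeltaz} yields exactly $\widetilde{\bm{z}}^T\widetilde{B}^\delta(\bm{c})\widetilde{\bm{z}}\ge(\evmin\delta^2/n)|\widetilde{\bm{z}}|^2$.

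The only real obstacle is keeping careful track of the normalizing factor in the projections at $\chi_\delta\bm{c}$; the quantitative loss in $\delta$ is unavoidable and reflects the degeneracy of $B(\bm{c})$ on the simplex boundary, which is precisely why the parameter $\delta$ is introduced in the first place.
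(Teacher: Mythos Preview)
Your proof is correct and follows essentially the same route as the paper: embed $\widetilde{\bm{z}}$ as $\bm{z}=(\widetilde{\bm{z}},0)$, apply \eqref{3.zBdeltaz}, and use Cauchy--Schwarz on the sum $\sum_{i=1}^{n-1}(\chi_\delta\bm{c})_i\widetilde z_i$ to extract the factor $(\chi_\delta\bm{c})_n/\sum_k(\chi_\delta\bm{c})_k\ge\delta/n$, followed by $(\chi_\delta\bm{c})_i\ge\delta$. The only cosmetic difference is that the paper phrases the intermediate quadratic form via the $(n-1)\times(n-1)$ submatrix $\widetilde P_L(\chi_\delta\bm{c})$ rather than the full projection formula $P_L\bm{y}=\bm{y}-(\bm{y}\cdot\bm{s})\bm{s}/|\bm{s}|^2$, but the two computations are identical.
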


\begin{proof}
It can be verified that Assumptions (B1)--(B2) hold for $D(\chi_\delta\bm{c})$,
so Lemmas \ref{lem.Dz} and \ref{lem.DB} still hold for the matrix 
$D(\chi_\delta\bm{c})$. Inequality \eqref{3.zBdeltaz} is a direct consequence of
Lemma \ref{lem.DB} (ii). It remains to prove \eqref{3.tilde}.
We define for given $\widetilde{\bm{z}}\in\R^{n-1}$ the vector $\bm{z}\in\R^n$ with
$z_i=\widetilde{z}_i$ for $i=1,\ldots,n-1$ and $z_n=0$. Then \eqref{3.zBdeltaz}
becomes
\begin{equation}\label{3.aux}
  \widetilde{\bm{z}}^T\widetilde{B}^\delta(\bm{c})\widetilde{\bm{z}}
	\ge \evmin\big|\widetilde{P}_L(\chi_\delta\bm{c})\widetilde{R}(\chi_\delta\bm{c})
	\widetilde{\bm{z}}\big|^2 
	= \evmin\big(\widetilde{R}(\chi_\delta\bm{c})\widetilde{\bm{z}}\big)^T
	\widetilde{P}_L(\chi_\delta\bm{c})
	\big(\widetilde{R}(\chi_\delta\bm{c})\widetilde{\bm{z}}\big),
\end{equation}
where $\widetilde{A}$ denotes the first $(n-1)\times(n-1)$ submatrix of a given 
matrix $A\in\R^{n\times n}$. It follows from the Cauchy--Schwarz inequality that
for any $\zeta\in\R^{n-1}$,
\begin{align*}
  \zeta^T\widetilde{P}_L(\chi_\delta\bm{c})\zeta
	&= \sum_{i=1}^{n-1}\zeta_i^2 - \left(\sum_{j=1}^{n-1}
	\sqrt{\frac{(\chi_\delta\bm{c})_j}{\sum_{k=1}^n(\chi_\delta\bm{c})_k}}
	\zeta_j\right)^2
	\ge |\zeta|^2 - \sum_{j=1}^{n-1}\frac{(\chi_\delta\bm{c})_j}{\sum_{k=1}^n
	(\chi_\delta\bm{c})_k}|\zeta|^2 \\
	&= \frac{(\chi_\delta\bm{c})_n}{\sum_{k=1}^n(\chi_\delta\bm{c})_k}|\zeta|^2
	\ge \frac{\delta}{n}|\zeta|^2.
\end{align*}
Therefore, \eqref{3.aux} becomes
$$
  \widetilde{\bm{z}}^T\widetilde{B}^\delta(\bm{c})\widetilde{\bm{z}}
	\ge \frac{\evmin\delta}{n}\sum_{i=1}^{n-1}\big|\sqrt{(\chi_\delta\bm{c})_i}
	\widetilde{z}_i\big|^2 
	= \frac{\evmin\delta}{n}
	\sum_{i=1}^{n-1}(\chi_\delta\bm{c})_i\big|\widetilde{z}_i\big|^2 
	\ge \frac{\evmin\delta^2}{n}|\widetilde{\bm{z}}|^2,
$$
which proves \eqref{3.tilde}.
\end{proof}

We proceed to the proof of Theorem \ref{thm.approx}. The proof is divided
into four steps. First, we reformulate \eqref{3.eq} using the first $n-1$
components. Second, a time-discretized regularized system, similarly 
as in \cite[Chapter 4]{Jue16}, is constructed and the existence of weak solutions
to this system is proved. Third, we derive some uniform estimates from the
energy inequality. Finally, we perform the de-regularization limit.

{\em Step 1: Reformulation in $n-1$ components.}
We reformulate the approximate system in terms of the $n-1$ relative chemical
potentials
$$
  w_i^\delta = \mu_i^\delta-\mu_n^\delta, \quad i=1,\ldots,n-1.
$$
It holds that 
$$
  \sum_{j=1}^n \big(P_L(\chi_\delta\bm{c})R(\chi_\delta\bm{c})\big)_{kj}
	= \sum_{j=1}^n\bigg(\delta_{kj} - \frac{\sqrt{(\chi_\delta\bm{c})_k
	(\chi_\delta\bm{c})_j}}{\sum_{\ell=1}^n(\chi_\delta\bm{c})_\ell}\bigg)
	\sqrt{(\chi_\delta\bm{c})_j} = 0.
$$
Then, using $D^{BD}(\bm{c})=D^{BD}(\bm{c})P_L(\bm{c})$ (which is a general
property of the Bott--Duffin inverse; see \cite[(81)]{HJT21}),
\begin{align*}
  \sum_{j=1}^n B_{ij}^\delta(\bm{c}) &= \sum_{j=1}^n\sqrt{(\chi_\delta\bm{c})_i}
	D_{ij}^{BD}(\bm{c})\sqrt{(\chi_\delta\bm{c})_j} \\
	&= \sum_{j,k=1}^n\sqrt{(\chi_\delta\bm{c})_i}D_{ik}^{BD}(\bm{c})
	\big(P_L(\chi_\delta\bm{c})R(\chi_\delta\bm{c})\big)_{kj} = 0.
\end{align*}
This shows that
$$
  \sum_{j=1}^n B_{ij}^\delta(\bm{c})\na\mu_j^\delta
	= \sum_{j=1}^{n-1}B_{ij}^\delta(\bm{c})\na\mu_j^\delta 
	+ B_{in}^\delta(\bm{c})\na\mu_n^\delta 
	= \sum_{j=1}^{n-1}B_{ij}^\delta(\bm{c})\na(\mu_j^\delta-\mu_n^\delta).
$$
Consequently, we can rewrite the first equation in \eqref{3.eq} as
\begin{equation}\label{3.cdelta}
  \pa_t c_i^\delta = \diver\sum_{j=1}^{n-1}\widetilde{B}_{ij}^\delta(\bm{c}^\delta)
	\na w_j^\delta, \quad i=1,\ldots,n-1, \quad 
	c_n^\delta = 1-\sum_{i=1}^{n-1}c_i^\delta,
\end{equation}
recalling that $\widetilde{B}^\delta$ is the first $(n-1)\times(n-1)$ submatrix
of $B^\delta$.

{\em Step 2: Existence for a regularized system.}
We consider for given $\delta>0$, $T>0$, $N\in\N$, and 
$(c_1^{k-1},\ldots,c_{n-1}^{k-1})$ the regularized system
\begin{align}\label{3.regul1}
  & \frac{1}{\tau}(c_i^k-c_i^{k-1}) 
	= \diver\sum_{j=1}^{n-1}\widetilde{B}_{ij}^\delta(  \widetilde{\bm{c}}^k )
	\na w_j^k - \eps(\Delta^2 w_i^k + w_i^k)\quad\mbox{in }\Omega, \\
	& w_i^k = (h_i^\delta)'(c_i^k) - (h_n^\delta)'(c_n^k)
	- \Delta(c_i^k-c_n^k), \quad i=1,\ldots,n-1, \label{3.regul2}
\end{align}
where $\tau=T/N$ and $c_n^k=1-\sum_{i=1}^{n-1}c_i^k$. 
Equation \eqref{3.regul1} is understood in the weak sense
$$
  \frac{1}{\tau} \int_\Omega(c_i^k-c_i^{k-1})\phi_i dx 
	+ \sum_{j=1}^{n-1} \int_\Omega \widetilde{B}_{ij}^\delta(\bm{c}^k)
	\na\phi_i\cdot\na w_j^k dx
	+ \eps \int_\Omega(\Delta w_i^k\Delta\phi_i + w_i^k\phi_i)dx = 0
$$
for test functions $\phi_i\in H^2(\Omega)$.

The $\eps$-regularization ensures that $w_i^k\in H^2(\Omega)\hookrightarrow
L^\infty(\Omega)$ since $d\le 3$. In higher space dimensions, we can replace
$\Delta^2 w_i^k$ by $(-\Delta)^m w_i^k$ with $m>d/2$, which gives
$w_i^k\in H^m(\Omega)\hookrightarrow L^\infty(\Omega)$.

We prove the solvability of \eqref{3.regul1}--\eqref{3.regul2} in two steps.

\begin{lemma}[Solvability of \eqref{3.regul2}]\label{lem.solv}
Let $\bm{w}\in L^2(\Omega;\R^{n-1})$.
Then there exists a unique strong solution $\widetilde{\bm{c}}\in H^2(\Omega;\R^{n-1})$ 
to
\begin{equation}\label{3.deltac}
  w_i = (h_i^\delta)'(c_i) - (h_n^\delta)'(c_n)	- \Delta(c_i-c_n)\quad\mbox{in }\Omega,
	\quad \na c_i\cdot\nu=0\quad\mbox{on }\pa\Omega
\end{equation}
for $i=1,\ldots,n-1$, where $c_n=1-\sum_{i=1}^{n-1}c_i$. This defines the
operator ${\mathcal L}:L^2(\Omega;\R^{n-1})
\to H^2(\Omega;\R^{n-1})$, ${\mathcal L}(\bm{w})=\widetilde{\bm{c}}$.
\end{lemma}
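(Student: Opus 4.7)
The approach is to realize $\widetilde{\bm{c}}$ as the unique minimizer of a strictly convex, coercive functional on $H^1(\Omega;\R^{n-1})$ and then bootstrap to $H^2$ via elliptic regularity. For fixed $\bm{w}\in L^2(\Omega;\R^{n-1})$, consider the functional
\begin{equation*}
J(\widetilde{\bm{c}}) = \sum_{j=1}^n\int_\Omega h_j^\delta(c_j) dx + \frac{1}{2}\sum_{j=1}^n\int_\Omega|\na c_j|^2 dx - \sum_{j=1}^{n-1}\int_\Omega w_j c_j dx,
\end{equation*}
with $c_n := 1 - \sum_{j=1}^{n-1} c_j$ determined by $\widetilde{\bm{c}}$. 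Since $(h_j^\delta)''\ge 1/(1-\delta)>0$ uniformly on $\R$ and $h_j^\delta$ has quadratic tails outside $[\delta,1-\delta]$, $J$ is strictly convex, weakly lower semicontinuous, and coercive on $H^1(\Omega;\R^{n-1})$ (the quadratic tails together with the gradient term control the full $H^1$ norm, and the affine contribution $-\int w_j c_j$ is absorbed by Young's inequality). The direct method of the calculus of variations yields a unique minimizer $\widetilde{\bm{c}}\in H^1(\Omega;\R^{n-1})$.

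Taking a test variation $\delta c_i=\phi_i\in H^1(\Omega)$ (which forces $\delta c_n=-\phi_i$) produces the Euler--Lagrange identity
\begin{equation*}
\int_\Omega\bigl[(h_i^\delta)'(c_i)-(h_n^\delta)'(c_n)-w_i\bigr]\phi_i dx + \int_\Omega\na(c_i-c_n)\cdot\na\phi_i dx = 0
\end{equation*}
for all $\phi_i\in H^1(\Omega)$ and $i=1,\ldots,n-1$, which is the weak formulation of \eqref{3.deltac} together with the natural Neumann condition $\na(c_i-c_n)\cdot\nu=0$ on $\pa\Omega$. Combining these $n-1$ boundary identities with $\sum_{k=1}^n\na c_k\cdot\nu=0$ (inherited from the constraint $\sum_k c_k=1$) forces $\na c_k\cdot\nu=0$ for every $k=1,\ldots,n$ individually, as required.

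For the $H^2$ upgrade, introduce the differences $d_i:=c_i-c_n\in H^1(\Omega)$, $i=1,\ldots,n-1$. Each $d_i$ weakly solves
\begin{equation*}
-\Delta d_i = w_i-(h_i^\delta)'(c_i)+(h_n^\delta)'(c_n)\quad\mbox{in }\Omega, \qquad \na d_i\cdot\nu=0\ \mbox{on }\pa\Omega,
\end{equation*}
with right-hand side in $L^2(\Omega)$ since $w_i\in L^2$ and $(h_j^\delta)'$ has at most linear growth while $c_j\in H^1(\Omega)\hookrightarrow L^6(\Omega)$ for $d\le 3$. Standard elliptic regularity for the Neumann Laplacian then gives $d_i\in H^2(\Omega)$. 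The saturation constraint allows the change of variables to be inverted explicitly, yielding $c_n=\bigl(1-\sum_{j=1}^{n-1}d_j\bigr)/n$ and $c_i=d_i+c_n$, so each $c_i\in H^2(\Omega)$ and the map $\mathcal{L}:\bm{w}\mapsto\widetilde{\bm{c}}$ is well-defined.

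The main obstacle is precisely this final bootstrap: the Euler--Lagrange system only controls the Laplacians of the differences $c_i-c_n$, so without the extra information from $\sum_j c_j=1$ one could a priori only conclude $c_i-c_n\in H^2$ while the individual components remain in $H^1$. The saturation constraint is what closes the algebraic system and lets the Neumann elliptic estimate be transferred to the individual components; the rest is standard variational machinery.
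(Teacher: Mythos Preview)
Your proof is correct and takes a somewhat different route from the paper. The paper writes the system directly as a semilinear elliptic system $\diver(M\na\widetilde{\bm{c}})_i = (h_i^\delta)'(c_i)-(h_n^\delta)'(c_n)-w_i$ with the constant positive-definite matrix $M$ ($M_{ii}=2$, $M_{ij}=1$ for $i\neq j$) and appeals to general elliptic-system theory for the $H^1$ solution; you instead obtain the $H^1$ solution as the unique minimizer of the convex functional $J$, which is more self-contained and sidesteps the need to quote an abstract result. For the $H^2$ upgrade both arguments rest on the same mechanism---using the saturation constraint $\sum_k c_k=1$ to close an otherwise underdetermined Laplacian system---but execute it differently: the paper sums the $n-1$ equations to isolate $\Delta c_n\in L^2(\Omega)$ first and then feeds this back into each equation, whereas you first regularize the differences $d_i=c_i-c_n$ and then invert $c_n=(1-\sum_j d_j)/n$, $c_i=d_i+c_n$. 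Your version has the small advantage that the natural Neumann condition $\na d_i\cdot\nu=0$ comes directly from the variational formulation, so the individual conditions $\na c_k\cdot\nu=0$ follow cleanly from the algebra; in the paper this step is stated but left implicit.
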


\begin{proof}
The system of equations can be written as
$$
  \diver(M\na\widetilde{\bm{c}})_i = (h_i^\delta)'(c_i) - (h_n^\delta)'(c_n) - w_i
	\quad\mbox{in }\Omega,
$$
where the entries of the diffusion matrix $M$ are $M_{ii}=2$ and
$M_{ij}=1$ for all $i\neq j$. In particular, $M$ is symmetric and positive definite. 
Thus, we can apply
the theory for elliptic systems with sublinear growth function and conclude the
existence of a unique weak solution $\widetilde{\bm{c}}\in H^1(\Omega;\R^{n-1})$.
It remains to verify that this solution lies in $H^2(\Omega;\R^{n-1})$.
Summing \eqref{3.deltac} over $i=1,\ldots,n-1$, we find that
$$
  \Delta c_n = -\sum_{i=1}^{n-1}\Delta c_i 
	= \frac{1}{n}\sum_{i=1}^{n-1}(w_i-(h_i^\delta)'(c_i)) 
	+ \frac{n-1}{n} (h_n^\delta)'(c_n)
	\in L^2(\Omega)
$$
with the boundary condition $\na c_n\cdot\nu=0$ on $\pa\Omega$.
We infer from elliptic regularity theory that $c_n\in H^2(\Omega)$.
Consequently, $\Delta c_n\in L^2(\Omega)$ and elliptic regularity again
implies that $c_i\in H^2(\Omega)$.
\end{proof}

It follows from Lemma \ref{lem.solv} that we can write \eqref{3.regul1} as
\begin{equation}\label{3.Bregul}
  \frac{1}{\tau}({\mathcal L}(\bm{w})_i - c_i^{k-1})
	= \diver\sum_{j=1}^{n-1} \widetilde{B}_{ij}^\delta(\widetilde{\bm{c}}^k)
	\na w_j^k - \eps(\Delta^2 w_i^k+w_i^k)\quad\mbox{in }\Omega,\ i=1,\ldots,n-1.
\end{equation}

\begin{lemma}[Solvability of \eqref{3.Bregul}]
Let $\widetilde{\bm{c}}^{k-1}\in H^2(\Omega;\R^{n-1})$.
Then there exists a weak solution $\bm{w}^k\in H^2(\Omega;\R^{n-1})$ 
to \eqref{3.Bregul} such that for all $\phi_i\in L^2(0,T;H^2(\Omega))$,
\begin{align*}
  \frac{1}{\tau}\int_\Omega({\mathcal L}(\bm{w})_i - c_i^{k-1})\phi_i dx
	&+ \sum_{i,j=1}^{n-1}\int_\Omega\widetilde{B}_{ij}^\delta({\mathcal L}(\bm{w}))
	\na\phi_i \cdot\na w_j^k dx \\
	&{}+ \eps\sum_{i=1}^{n-1}\int_\Omega(\Delta w_i^k\Delta\phi_i+w_i^k\phi_i)dx = 0.
\end{align*} 
\end{lemma}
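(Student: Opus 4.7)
The plan is to construct $\bm{w}^k$ as a fixed point of a compact operator via the Leray--Schauder theorem, using the operator $\mathcal{L}$ from Lemma~\ref{lem.solv} to eliminate $\widetilde{\bm{c}}^k$. For $\sigma\in[0,1]$ and $\bm{w}^*\in L^\infty(\Omega;\R^{n-1})$, I first set $\widetilde{\bm{c}}^*:=\mathcal{L}(\bm{w}^*)\in H^2(\Omega;\R^{n-1})$ and then solve by Lax--Milgram the linear variational problem: find $\bm{w}\in H^2(\Omega;\R^{n-1})$ such that
\begin{equation*}
  \sum_{i,j=1}^{n-1}\int_\Omega\widetilde{B}_{ij}^\delta(\widetilde{\bm{c}}^*)\na w_j\cdot\na\phi_i\,dx
  + \eps\sum_{i=1}^{n-1}\int_\Omega(\Delta w_i\Delta\phi_i + w_i\phi_i)\,dx
  = -\frac{\sigma}{\tau}\sum_{i=1}^{n-1}\int_\Omega(\widetilde c^*_i - c_i^{k-1})\phi_i\,dx
\end{equation*}
for all $\bm{\phi}\in H^2(\Omega;\R^{n-1})$, and set $F(\bm{w}^*,\sigma):=\bm{w}$. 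A fixed point of $F(\cdot,1)$ is the sought weak solution of \eqref{3.Bregul}.

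\textbf{Well-definedness, continuity and compactness of $F$.} The bilinear form is bounded on $H^2$ by Lemma~\ref{lem.DB}(i) and coercive since the $\eps$-terms control $\|\bm{w}\|_{H^2}^2$ while $\widetilde{B}^\delta(\widetilde{\bm{c}}^*)$ is positive semidefinite (Lemma~\ref{lem.Bdelta}), so Lax--Milgram applies. Continuity of $F$ in the $L^\infty$-topology follows from continuity of $\mathcal{L}$ (Lemma~\ref{lem.solv}), from continuity of $\widetilde{\bm{c}}\mapsto\widetilde{B}^\delta(\widetilde{\bm{c}})$ (the cut-off $\chi_\delta$ keeps the argument in $[\delta,1-\delta]^n$, where $D^{BD}$ is $C^1$ by Lemma~\ref{lem.DB}(i)), and from standard perturbation bounds for Lax--Milgram. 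Compactness is ensured by the embedding $H^2(\Omega)\hookrightarrow L^\infty(\Omega)$, valid since $d\le 3$. Clearly $F(\cdot,0)\equiv 0$, so the only fixed point at $\sigma=0$ is zero.

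\textbf{Uniform a priori bound; main obstacle.} The decisive step is the $\sigma$-uniform bound on fixed points. If $\bm{w}=F(\bm{w},\sigma)$ and $\widetilde{\bm{c}}:=\mathcal{L}(\bm{w})$, then \eqref{3.regul2} gives $w_i=\mu_i-\mu_n$ with $\mu_j=(h_j^\delta)'(c_j)-\Delta c_j=\pa\E^\delta/\pa c_j$. I would test with $\phi_i=w_i$ and use $\sum_{i=1}^n(c_i-c_i^{k-1})=0$ to rewrite the time-derivative contribution as $\sum_{i=1}^n\mu_i(c_i-c_i^{k-1})$. Convexity of $\E^\delta$ (each $h_i^\delta$ is convex since $(h_i^\delta)''=1/(\chi_\delta\bm{c})_i>0$, and the Dirichlet part is quadratic) then yields the discrete chain-rule inequality
\begin{equation*}
  \sum_{i=1}^n\int_\Omega\mu_i(c_i-c_i^{k-1})\,dx\ge \E^\delta(\widetilde{\bm{c}})-\E^\delta(\bm{c}^{k-1}).
\end{equation*}
Combined with the positive definiteness \eqref{3.tilde} of $\widetilde B^\delta$, this gives
\begin{equation*}
  \frac{\sigma}{\tau}\bigl(\E^\delta(\widetilde{\bm{c}})-\E^\delta(\bm{c}^{k-1})\bigr)
  + \eta(\delta)\|\na\bm{w}\|_{L^2}^2 + \eps\bigl(\|\Delta\bm{w}\|_{L^2}^2 + \|\bm{w}\|_{L^2}^2\bigr)\le 0,
\end{equation*}
and, since $\E^\delta$ is bounded below (the quadratic extensions in \eqref{3.hdelta} enforce superlinear growth at infinity), a $\sigma$-independent $H^2$-bound on $\bm{w}$ follows. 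Leray--Schauder then produces the fixed point at $\sigma=1$. The hard part is precisely aligning the test function $w_i=\mu_i-\mu_n$ with the discrete energy slope of $\E^\delta$, so that convexity of $\E^\delta$ and the lower bound \eqref{3.tilde} close the estimate; here the fact that $\tau,\eps,\delta$ are all fixed makes the degeneracy of $\eta(\delta)$ harmless.
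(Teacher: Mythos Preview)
Your proposal is correct and follows essentially the same route as the paper: Leray--Schauder on $L^\infty(\Omega;\R^{n-1})$, Lax--Milgram for the frozen linear problem, and the discrete energy inequality obtained by testing with $\bm{w}$ together with convexity of $\E^\delta$ for the $\sigma$-uniform bound. The only cosmetic differences are that the paper spells out the convexity step component-wise (separating the $h_i^\delta$ and gradient parts) rather than invoking convexity of $\E^\delta$ globally, and that you are slightly more explicit about the lower bound on $\E^\delta$ needed to close the estimate.
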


\begin{proof}
Given $\bar{\bm{w}}\in L^\infty(\Omega;\R^{n-1})$ and $\sigma\in[0,1]$, 
we wish to find a solution to the linear problem
\begin{equation}\label{3.LM}
  {\mathcal A}(\bm{w},\bm{\phi}) = {\mathcal F}(\bm{\phi})\quad\mbox{for }
	\bm{\phi}\in H^2(\Omega;\R^{n-1}),
\end{equation}
where
\begin{align*}
  {\mathcal A}(\bm{w},\bm{\phi}) &= \sum_{i,j=1}^{n-1}\int_\Omega
	\widetilde{B}_{ij}^\delta({\mathcal L}(\bar{\bm{w}}))\na\phi_i\cdot\na w_j dx
	+ \eps\sum_{i=1}^{n-1}\int_\Omega(\Delta w_i\Delta\phi_i + w_i\phi_i)dx, \\
	{\mathcal F}(\bm{\phi}) &= -\frac{\sigma}{\tau}\int_\Omega({\mathcal L}(\bar{\bm{w}})
	- \widetilde{\bm{c}}^{k-1})\cdot\bm{\phi}dx.
\end{align*}
We infer from the boundedness of $\widetilde{B}_{ij}^\delta
({\mathcal L}(\bar{\bm{w}}))$
that the bilinear form ${\mathcal A}$ is continuous on $H^2(\Omega;\R^{n-1})$.
Furthermore, by the positive definiteness of 
$\widetilde{B}_{ij}^\delta({\mathcal L}(\bar{\bm{w}}))$, thanks to \eqref{3.tilde},
${\mathcal A}$ is coercive. Moreover, ${\mathcal F}$ is a continuous linear form
on $H^2(\Omega;\R^{n-1})$. We conclude from the Lax--Milgram theorem that there
exists a unique solution $\bm{w}\in H^2(\Omega;\R^{n-1})$ to \eqref{3.LM}.
Since $d\le 3$ by Assumption (A1), we have $H^2(\Omega)\hookrightarrow
L^\infty(\Omega)$ and therefore $\bm{w}\in L^\infty(\Omega;\R^{n-1})$.

This defines the fixed-point operator $S:L^\infty(\Omega;\R^{n-1})\times[0,1]\to
L^\infty(\Omega;\R^{n-1})$, $S(\bar{\bm{w}},\sigma)=\bm{w}$. The operator $S$ 
is continuous, and it satisfies $S(\bar{\bm{w}},0)=\bm{0}$ for all $\bar{\bm{w}}
\in L^\infty(\Omega;\R^{n-1})$. In view of the compact embedding
$H^2(\Omega)\hookrightarrow L^\infty(\Omega)$, $S$ is also compact. It remains to
verify that all fixed points of $S(\cdot,\sigma)$ are uniformly bounded.
To this end, let $\bm{w}\in L^\infty(\Omega;\R^{n-1})$ be such a fixed point.
Then $\bm{w}\in H^2(\Omega;\R^{n-1})$ solves \eqref{3.LM} with 
$\bar{\bm{w}}=\bm{w}$. We choose the test function $\bm{\phi} = \bm{w}$ in \eqref{3.LM}
to find that
\begin{equation}\label{3.aux2}
  \frac{\sigma}{\tau}\int_\Omega(\widetilde{\bm{c}}-\widetilde{\bm{c}}^{k-1})
	\cdot\bm{w} dx + \sum_{i,j=1}^{n-1}\int_\Omega \widetilde{B}_{ij}^\delta
	(\widetilde{\bm{c}})\na w_i\cdot\na w_j dx
	+ \eps\sum_{i=1}^{n-1}\int_\Omega((\Delta w_i)^2+w_i^2)dx = 0,
\end{equation}
where $\widetilde{\bm{c}}={\mathcal L}(\bm{w})=(c_1,\ldots,c_{n-1})$ and $c_i$ solves 
\eqref{3.regul2} with $w_i^k$ replaced by $w_i$. Using the test function
$c_i-c_i^{k-1}$ in the weak formulation of \eqref{3.regul2} leads to
\begin{align*}
  \sum_{i=1}^{n-1}\int_\Omega(c_i-c_i^{k-1})w_i dx
	&= \sum_{i=1}^{n-1}\int_\Omega\big(\na(c_i-c_n)\cdot\na(c_i-c_i^{k-1}) \\ 
	&\phantom{xx}{}+ ((h_i^\delta)'(c_i)-(h_i^\delta)'(c_n))(c_i-c_i^{k-1})\big)dx.
\end{align*}
The convexity of the function $h_i^\delta$ and $\sum_{i=1}^{n-1}c_i=1-c_n$ imply that
\begin{align*}
  \sum_{i=1}^{n-1}(c_i-c_i^{k-1})(h_i^\delta)'(c_i)
	&\ge \sum_{i=1}^{n-1}\big(h_i^\delta(c_i)-h_i^\delta(c_i^{k-1})\big), \\
	-\sum_{i=1}^{n-1}(c_i-c_i^{k-1})(h_n^\delta)'(c_n)
	&= (c_n-c_n^{k-1})(h_n^\delta)'(c_n) \ge h_n^\delta(c_n)-h_n^\delta(c_n^{k-1}).
\end{align*}
Moreover, since $\sum_{i=1}^{n-1}\na c_i=-\na c_n$ and 
$\sum_{i=1}^{n-1}\na c_i^{k-1}=-\na c_n^{k-1}$,
\begin{align*}
  \sum_{i=1}^{n-1}\na(c_i-c_n)\cdot\na (c_i-c_i^{k-1})
	&= \sum_{i=1}^{n}|\na c_i|^2 - \sum_{i=1}^{n}\na c_i^{k-1}\cdot\na c_i \\ 
	&\ge \frac12\sum_{i=1}^n|\na c_i|^2 - \frac12\sum_{i=1}^n|\na c_i^{k-1}|^2.
\end{align*}
This yields
\begin{align*}
  \sum_{i=1}^{n-1}\int_\Omega(c_i-c_i^{k-1})w_i dx 
	&\ge \sum_{i=1}^{n}\int_\Omega\big(h_i^\delta(c_i)-h_i^\delta(c_i^{k-1})\big)dx 
	+ \frac12\sum_{i=1}^{n}\int_\Omega\big(|\na c_i|^2 - |\na c_i^{k-1}|^2\big) dx \\
	&\ge \widetilde{\E}^\delta(\widetilde{\bm{c}}) 
	- \widetilde{\E}^\delta(\widetilde{\bm{c}}^{k-1}),
\end{align*}
where
$$
  \widetilde{\E}^\delta(\widetilde{\bm{c}}) := \widetilde{\H}^\delta(\widetilde{\bm{c}})
	+ \sum_{i=1}^n\int_\Omega|\na c_i|^2 dx, \quad
	\widetilde{\H}^\delta(\widetilde{\bm{c}}) := \H^\delta(\bm{c}).
$$
Inserting this inequality into \eqref{3.aux2} finally gives
\begin{equation}\label{3.rfei}
  \sigma\widetilde{\E}^\delta(\widetilde{\bm{c}})
	+ \tau\sum_{i,j=1}^{n-1}\int_\Omega \widetilde{B}_{ij}^\delta(\widetilde{\bm{c}})
	\na w_i\cdot\na w_j dx + \eps\tau\int_\Omega(|\Delta\bm{w}|^2 + |\bm{w}|^2)dx
	\le \sigma\widetilde{\E}^\delta(\widetilde{\bm{c}}^{k-1}).
\end{equation}
By the positive definiteness of $\widetilde{B}^\delta$ (positive semidefiniteness
is sufficient), this gives a uniform $H^2(\Omega)$ bound and
consequently a uniform $L^\infty(\Omega)$ bound for $\bm{w}$.
The Leray--Schauder fixed-point theorem now implies the existence of a solution
to \eqref{3.regul1}--\eqref{3.regul2}.
\end{proof}

{\em Step 3: Uniform estimates.}
We wish to derive estimates uniform in $\eps$ and $\tau$. The starting point is
the regularized energy estimate \eqref{3.rfei} and the positive definiteness
estimate \eqref{3.tilde}. First, we introduce the piecewise constant in time functions
$\bm{w}^{(\tau)}(x,t)=\bm{w}^k(x)$, $\widetilde{\bm{c}}^{(\tau)}(x,t)
= {\mathcal L}(\bm{w}^k(x))$ for $x\in\Omega$ and $t\in((k-1)\tau,k\tau]$,
$k=1,\ldots,N$, and set 
$\bm{w}^{(\tau)}(x,0)=(\pa\widetilde{\E}/\pa\widetilde{\bm{c}})(\widetilde{\bm{c}}^0)$ 
and $\widetilde{\bm{c}}^{(\tau)}(x,0)=\widetilde{\bm{c}}^0$. Introducing the shift
operator $(\sigma_\tau\bm{w}^{(\tau)})(x,t)=\bm{w}^{(\tau)}(x,t-\tau)$ for
$x\in\Omega$ and $t\ge \tau$, we can formulate \eqref{3.regul1}--\eqref{3.regul2} as
\begin{align}
  & \frac{1}{\tau}(\widetilde{\bm{c}}^{(\tau)}-\sigma_\tau\widetilde{\bm{c}}^{(\tau)})
	= \diver(\widetilde{B}^\delta(\widetilde{\bm{c}})\na\bm{w}^{(\tau)})
	- \eps(\Delta^2\bm{w}^{(\tau)}+\bm{w}^{(\tau)}), \label{3.tau1} \\
	& w_i^{(\tau)} = (h_i^\delta)'(c_i^{(\tau)}) - (h_n^\delta)'(c_n^{(\tau)})
	- \Delta(c_i^{(\tau)}-c_n^{(\tau)}), \quad i=1,\ldots,n-1, \label{3.tau2} 
\end{align}
recalling that $\widetilde{\bm{c}}^{(\tau)}={\mathcal L}(\bm{w}^{(\tau)})$
is a function of $\bm{w}^{(\tau)}$. Then \eqref{3.rfei} can be 
written after summation over $k=1,\ldots,N$ as
\begin{equation*}
  \widetilde{\E}^\delta(\widetilde{\bm{c}}^{(\tau)}(T))
	+ \eta(\delta)\int_0^T\int_\Omega|\na\bm{w}^{(\tau)}|^2 dxdt
	+ \eps C\int_0^T\|\bm{w}^{(\tau)}\|_{H^2(\Omega)}^2 dt
	\le \widetilde{\E}^\delta(\widetilde{\bm{c}}^0),
\end{equation*}
where we used \eqref{3.tilde} and the generalized Poincar\'e inequality with
constant $C>0$. This implies the estimates
\begin{equation}\label{3.west}
  C(\delta)\|\bm{w}^{(\tau)}\|_{L^2(0,T;H^1(\Omega))}
	+ \sqrt{\eps}\|\bm{w}^{(\tau)}\|_{L^2(0,T;H^2(\Omega))} \le C,
\end{equation}
where $C>0$ denotes here and in the following a constant independent of
$\eps$ and $\tau$. 

To derive a uniform estimate for $\widetilde{\bm{c}}^{(\tau)}$, we multiply
\eqref{3.tau2} by $-\Delta c_i^{(\tau)}$, integrate over $Q_T=\Omega\times(0,T)$,
integrate by parts, and sum over $i=1,\ldots,n-1$:
\begin{align*}
  \sum_{i=1}^{n-1}\int_0^T\int_\Omega&\na w_i^{(\tau)}\cdot\na c_i^{(\tau)} dxdt
	= \sum_{i=1}^{n-1}\int_0^T\int_\Omega\na\big((h_i^\delta)'(c_i^{(\tau)})
	- (h_n^\delta)'(c_n^{(\tau)})\big)\cdot\na c_i^{(\tau)} dxdt \\
	&\phantom{xx}{}+ \sum_{i=1}^{n-1}\int_0^T\int_\Omega\big((\Delta c_i^{(\tau)})^2
	- \Delta c_i^{(\tau)}\Delta c_n^{(\tau)}\big)dxdt =: I_3+I_4.
\end{align*}
Since $\na(h_i^\delta)'(c_i^{(\tau)})=(h_i^\delta)''(c_i^{(\tau)})\na c_i^{(\tau)}
= \na c_i^{(\tau)}/(\chi_\delta\bm{c}^{(\tau)})_i$ and 
$\sum_{i=1}^{n-1}\na c_i^{(\tau)}=-\na c_n^{(\tau)}$, the term $I_3$ can be written as
$$
  I_3 = \sum_{i=1}^n\int_0^T\int_\Omega
	\frac{|\na c_i^{(\tau)}|^2}{(\chi_\delta\bm{c}^{(\tau)})_i}dxdt.
$$
Using the property $\sum_{i=1}^{n-1}\Delta c_i^{(\tau)}=-\Delta c_n^{(\tau)}$, 
the remaining term $I_4$ becomes
$$
  I_4 = \sum_{i=1}^n\int_0^T\int_\Omega(\Delta c_i^{(\tau)})^2 dxdt.
$$
Therefore, by Young's inequality,
\begin{align*}
  \sum_{i=1}^n&\int_0^T\int_\Omega(\Delta c_i^{(\tau)})^2 dxdt
	+ \sum_{i=1}^n\int_0^T\int_\Omega
	\frac{|\na c_i^{(\tau)}|^2}{(\chi_\delta\bm{c}^{(\tau)})_i}dxdt
	= \sum_{i=1}^{n-1}\int_0^T\int_\Omega\na w_i^{(\tau)}\cdot\na c_i^{(\tau)}dxdt \\
	&\le \frac12\sum_{i=1}^{n-1}\int_0^T\int_\Omega\bigg(\frac{|\na c_i^{(\tau)}|^2}{
	(\chi_\delta\bm{c}^{(\tau)})_i}
	+ (\chi_\delta\bm{c}^{(\tau)})_i|\na w_i^{(\tau)}|^2\bigg) dxdt \\
	&\le \frac12\sum_{i=1}^{n-1}\int_0^T\int_\Omega\frac{|\na c_i^{(\tau)}|^2}{
	(\chi_\delta\bm{c}^{(\tau)})_i}dxdt
	+ \frac12\sum_{i=1}^{n-1}\int_0^T\int_\Omega|\na w_i^{(\tau)}|^2dxdt.
\end{align*}
The first term on the right-hand side is absorbed by the left-hand side. Thus,
we deduce from \eqref{3.west} that
$$
  \sum_{i=1}^n\int_0^T\int_\Omega(\Delta c_i^{(\tau)})^2 dxdt
	+ \frac12\sum_{i=1}^n\int_0^T\int_\Omega
	\frac{|\na c_i^{(\tau)}|^2}{(\chi_\delta\bm{c}^{(\tau)})_i}dxdt
	\le \frac12\|\na \bm{w}^{(\tau)}\|^2_{L^2(Q_T)}
	\le C.
$$
Since $c_i^{(\tau)}\in L^\infty(Q_T)$, we infer from the
previous estimate that
\begin{equation}\label{3.cH2}
  \|c_i^{(\tau)}\|_{L^2(0,T;H^2(\Omega))} \le C, \quad i=1,\ldots,n.
\end{equation}

Finally, we derive an estimate for the discrete time derivative. It follows from
\eqref{3.Bregul} that
\begin{align*}
  \frac{1}{\tau}\|c_i^{(\tau)}-\sigma_\tau c_i^{(\tau)}\|_{L^2(0,T;H^2(\Omega)')}
	&\le \sum_{j=1}^{n-1}\|\widetilde{B}^\delta_{ij}(\widetilde{\bm{c}}^{(\tau)})
	\|_{L^\infty(Q_T)}\|\na w_j^{(\tau)}\|_{L^2(Q_T)} \\
	&\phantom{xx}{}+ \eps\|w_i^{(\tau)}\|_{L^2(0,T;H^2(\Omega))}.
\end{align*}
The entries of $\widetilde{B}^\delta(\widetilde{\bm{c}}^{(\tau)})$ are bounded since
$\delta\le(\chi_\delta\bm{c}^{(\tau)})_i\le 1-\delta$. Thus, by \eqref{3.west},
\begin{equation}\label{3.ctime}
  \tau^{-1}\|c_i^{(\tau)}-\sigma_\tau c_i^{(\tau)}\|_{L^2(0,T;H^2(\Omega)')}
	\le C, \quad i=1,\ldots,n-1.
\end{equation}

{\em Step 4: Limit $(\eps,\tau)\to 0$.}
In view of estimates \eqref{3.cH2} and \eqref{3.ctime}, we can apply the
Aubin--Lions lemma in the version of \cite[Theorem 1]{DrJu12} to conclude the
existence of a subsequence, which is not relabeled, such that
as $(\eps,\tau)\to 0$,
$$
  c_i^{(\tau)}\to c_i\quad\mbox{strongly in }L^2(0,T;H^1(\Omega)),\ i=1,\ldots,n-1.
$$
We deduce from \eqref{3.west}--\eqref{3.ctime} that, possibly for another subsequence,
\begin{align*}
  c_i^{(\tau)} \rightharpoonup c_i &\quad\mbox{weakly in }L^2(0,T;H^2(\Omega)), \\
	\tau^{-1}(c_i^{(\tau)}-\sigma_\tau c_i^{(\tau)}) \rightharpoonup \pa_t c_i
	&\quad\mbox{weakly in }L^2(0,T;H^2(\Omega)'), \\
	w_i^{(\tau)} \rightharpoonup w_i &\quad\mbox{weakly in }L^2(0,T;H^1(\Omega)), \\
	\eps w_i^{(\tau)}\to 0 &\quad\mbox{strongly in }L^2(0,T;H^2(\Omega)),
	\quad i=1,\ldots,n-1.
\end{align*}
We define $c_n:=1-\sum_{i=1}^{n-1}c_i$. Then $c_n^{(\tau)}\to c_n$ strongly
in $L^2(0,T;H^1(\Omega))$ and weakly in $L^2(0,T;H^2(\Omega))$.
Furthermore, $(c_i^{(\tau)})$ converges, up to a subsequence, pointwise a.e.,
and its limit satisfies $\delta\le(\chi_\delta\bm{c})_i\le 1-\delta$, $i=1,\ldots,n$.
The matrix $\widetilde{B}_{ij}^\delta(\widetilde{\bm{c}}^{(\tau)})$ 
is uniformly bounded and
$$
  \widetilde{B}_{ij}^\delta(\widetilde{\bm{c}}^{(\tau)})
	\to \widetilde{B}_{ij}^\delta(\widetilde{\bm{c}})\quad\mbox{strongly in }
	L^q(Q_T)\mbox{ for any } q<\infty,\ i,j=1,\ldots,n.
$$
These convergence results allow us to pass to the limit $(\eps,\tau)\to 0$
in the weak formulation of \eqref{3.tau1}--\eqref{3.tau2} to find that 
$\bm{c}$ solves
$$
  \pa_t c_i = \diver\sum_{j=1}^{n-1}\widetilde{B}_{ij}^\delta(\widetilde{\bm{c}})
	\na w_j, \quad w_i = (h_i^\delta)'(c_i)-(h_n^\delta)'(c_n)
	- \Delta(c_i-c_n)
$$
for $i=1,\ldots,n-1$. Transforming back to the chemical potential $\bm{\mu}$
via $w_i=\mu_i-\mu_n$ and $c_n=1-\sum_{i=1}^{n-1}c_i$, we see that 
$\bm{c}^\delta:=\bm{c}$ solves system \eqref{3.eq}--\eqref{3.bic},
where $\mu_i=(h_i^\delta)'(c_i)-\Delta c_i$.


\subsection{Uniform estimates}\label{sec.unif}

We derive energy and entropy estimates for the solutions to \eqref{3.eq},
being uniform in $\delta$.

\begin{lemma}[Energy and entropy inequalities]\label{lem.eei}
Let $\bm{c}^\delta$ be a weak solution to \eqref{3.eq}--\eqref{3.bic},
constructed in Theorem \ref{thm.approx}. Then the following inequalities hold
for any $T>0$,
\begin{align}
  & \E^\delta(\bm{c}^\delta(\cdot,T)) + \sum_{i,j=1}^n\int_0^T\int_\Omega 
	B_{ij}^\delta(\bm{c}^\delta)\na\mu_i^\delta\cdot\na\mu_j^\delta dxdt 
	\le \E^\delta(\bm{c}^0), \label{3.Ed} \\
	& \H^\delta(\bm{c}^\delta(\cdot,T)) + \sum_{i,j=1}^n\int_0^T\int_\Omega
	B_{ij}^\delta(\bm{c}^\delta)\na(h_i^\delta)'(c_i^\delta)\cdot\na\mu_j^\delta dxdt
	\le \H^\delta(\bm{c}^0), \label{3.Hd} \\
	& \H^\delta(\bm{c}^\delta(\cdot,T)) + \frac{(\evmax-\lambda)^2}{2\evmin\lambda}
	\E^\delta(\bm{c}^\delta(\cdot,T)) + \lambda\sum_{i=1}^n\int_0^T\int_\Omega
	\frac{|\na c_i^\delta|^2}{(\chi_\delta\bm{c}^\delta)_i}dxdt \label{3.EHd} \\
	&\phantom{xxxx}{}+ \lambda\sum_{i=1}^n\int_0^T\int_\Omega(\Delta c_i^\delta)^2 dxdt
	+ \frac{(\evmax-\lambda)^2}{2\lambda}\int_0^T\int_\Omega
	\big|P_L(\chi_\delta\bm{c}^\delta)R(\chi_\delta\bm{c}^\delta)\na\bm{\mu}^\delta
	\big|^2 dxdt \nonumber \\
	&\phantom{xx}{}\le \H^\delta(\bm{c}^0) + \frac{(\evmax-\lambda)^2}{2\evmin\lambda}
	\E^\delta(\bm{c}^0), \nonumber
\end{align}
where $0<\lambda<\evmin$, $\evmin$, $\evmax$ are introduced in
Lemma \ref{lem.DB}, and $R(\chi_\delta\bm{c}^\delta)
=\operatorname{diag}(\sqrt{\chi_\delta\bm{c}^\delta})$.
\end{lemma}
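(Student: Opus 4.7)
My plan is to obtain each of the three bounds by passing to the limit $(\eps,\tau)\to 0$ in the time-discrete regularized scheme \eqref{3.regul1}--\eqref{3.regul2} of Section \ref{sec.approx}, rather than by testing the limit equation \eqref{3.eq} directly. At the discrete level both $w_i^k$ and $(h_i^\delta)'(c_i^k)-(h_n^\delta)'(c_n^k)$ lie in $H^2(\Omega)$ and are admissible test functions, while the admissibility of $\mu_i^\delta$ or $(h_i^\delta)'(c_i^\delta)$ in the limit weak formulation is not immediate (indeed, $\na\mu_i^\delta$ need not be in $L^2$).

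For \eqref{3.Ed}, I would take the discrete energy bound \eqref{3.rfei} (with $\sigma=1$), sum over $k=1,\ldots,N$, and rewrite the dissipation in terms of the full chemical potential by using $w_j=\mu_j-\mu_n$ together with the row-sum identity $\sum_{j=1}^n B^\delta_{ij}=0$ proved in Step 1 of Theorem \ref{thm.approx}; in particular $\sum_{i,j=1}^n B^\delta_{ij}\na\mu_i\cdot\na\mu_j=\sum_{i,j=1}^{n-1}\widetilde B^\delta_{ij}\na w_i\cdot\na w_j$, so the dissipation is actually controlled through the $w$'s alone. The limit $(\eps,\tau)\to 0$ then uses the convergences of Step 4: $\E^\delta$ is convex in $(\bm{c},\na\bm{c})$ and therefore weakly lower semicontinuous in $H^1$; the quadratic dissipation is weakly lower semicontinuous because $\widetilde B^\delta(\bm{c}^{(\tau)})\to\widetilde B^\delta(\bm{c}^\delta)$ strongly in every $L^q(Q_T)$ by dominated convergence (the cutoff $\chi_\delta$ renders $\widetilde B^\delta$ globally Lipschitz), while $\na\bm{w}^{(\tau)}\rightharpoonup\na\bm{w}^\delta$ weakly in $L^2(Q_T)$; this is the standard lower semicontinuity of a nonnegative quadratic form with strongly convergent coefficient matrix.

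For \eqref{3.Hd}, I would test \eqref{3.regul1} with $\phi_i=(h_i^\delta)'(c_i^k)-(h_n^\delta)'(c_n^k)$. The convexity inequalities $(c_i^k-c_i^{k-1})(h_i^\delta)'(c_i^k)\ge h_i^\delta(c_i^k)-h_i^\delta(c_i^{k-1})$ for $i\le n-1$, together with the analogue for $c_n^k$ that were already used in the solvability argument of Theorem \ref{thm.approx}, convert the discrete time difference into $\widetilde\H^\delta(\widetilde{\bm{c}}^k)-\widetilde\H^\delta(\widetilde{\bm{c}}^{k-1})$. The row-sum identity $\sum_j B^\delta_{ij}=0$ again rewrites the flux contribution as $\sum_{i,j=1}^n B^\delta_{ij}\na(h_i^\delta)'(c_i^k)\cdot\na\mu_j^k$, expressible through $w$-gradients; summing in $k$ and passing to the limit as in the energy step yields \eqref{3.Hd}. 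Once \eqref{3.Ed} and \eqref{3.Hd} are in place, the combined bound \eqref{3.EHd} is obtained by repeating the algebraic manipulation of Lemma \ref{lem.fei} verbatim: decompose $B^\delta=\lambda G^\delta+M^\delta$ with $G^\delta=R(\chi_\delta\bm{c}^\delta)P_L(\chi_\delta\bm{c}^\delta)R(\chi_\delta\bm{c}^\delta)$, note positive semidefiniteness of $M^\delta$ from \eqref{3.zBdeltaz}, compute the $G^\delta$-contribution explicitly via $\na(h_i^\delta)'(c_i^\delta)=\na c_i^\delta/(\chi_\delta\bm{c}^\delta)_i$ and $\sum_{i=1}^n\na c_i^\delta=0$ (producing the third and fourth terms of \eqref{3.EHd}), split the $M^\delta$-cross-term by Young's inequality, and absorb its $\bm\mu^\delta$-half using \eqref{3.Ed} weighted by $(\evmax-\lambda)^2/(\evmin\lambda)$.

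The main obstacle is ensuring that the limit passage in the \emph{entropy} dissipation of \eqref{3.Hd} is compatible with the cutoff. Because $(h_i^\delta)''$ jumps to $1/\delta$ outside $[\delta,1-\delta]$, the integrand $\sum_{i,j}B^\delta_{ij}\na(h_i^\delta)'(c_i^{(\tau)})\cdot\na\mu_j^{(\tau)}$ is not obviously a perfect square, so weak lower semicontinuity cannot be invoked in one stroke. My plan is to re-express it, using $\na(h_i^\delta)'(c_i^{(\tau)})=\na c_i^{(\tau)}/(\chi_\delta\bm{c}^{(\tau)})_i$, as a genuine quadratic form in the vector $(R(\chi_\delta\bm c^{(\tau)})^{-1}\na\bm c^{(\tau)},\na\bm w^{(\tau)})$ with strongly convergent coefficient matrix, which reduces the limit to the standard $L^2$-lower-semicontinuity statement for norms. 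All additional $\eps$-remainders are controlled via the uniform bounds \eqref{3.west}--\eqref{3.cH2} and vanish in the limit.
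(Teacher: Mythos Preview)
Your treatment of \eqref{3.Ed} and \eqref{3.EHd} matches the paper's exactly. For \eqref{3.Hd}, however, the paper proceeds differently and more simply: it tests the \emph{limit} equation \eqref{3.cdelta} (where the $\eps$-regularization has already disappeared and the equation lives in $L^2(0,T;(H^1)')$) with $(h_i^\delta)'(c_i^\delta)-(h_n^\delta)'(c_n^\delta)\in L^2(0,T;H^1(\Omega))$, invoking the chain rule for the time term. No limit passage in the dissipation is needed at all.

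Your discrete route encounters two issues. First, the obstacle you flag as ``main'' is not one: since $\na c_i^{(\tau)}\to\na c_i^\delta$ \emph{strongly} in $L^2(Q_T)$ (Aubin--Lions, Step~4), and $(\chi_\delta\bm c^{(\tau)})_i^{-1}$, $\widetilde B^\delta(\bm c^{(\tau)})$ converge strongly in every $L^q$, the factor $\widetilde B^\delta\na(h_i^\delta)'(c_i^{(\tau)})$ converges strongly in $L^2(Q_T)$; paired with the weak limit $\na w^{(\tau)}\rightharpoonup\na w^\delta$ this gives \emph{convergence} of the cross term, not merely a liminf. No indefinite quadratic-form trick is required. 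Second, the part you dismiss in one line is the real gap: testing \eqref{3.regul1} with $\phi_i^k=(h_i^\delta)'(c_i^k)-(h_n^\delta)'(c_n^k)$ produces the remainder $\eps\int_{Q_T}\Delta w^{(\tau)}\Delta\phi^{(\tau)}\,dxdt$, and this has no sign. You need $\|\Delta\phi^{(\tau)}\|_{L^2(Q_T)}$ bounded to kill it via $\sqrt\eps\,\|\Delta w^{(\tau)}\|_{L^2}\le C$, but $\Delta\phi^{(\tau)}$ contains $(h_i^\delta)'''(c^{(\tau)})\,|\na c^{(\tau)}|^2$, and from $\na c^{(\tau)}\in L^\infty_t(L^2_x)\cap L^2_t(L^6_x)$ you only get $|\na c^{(\tau)}|^2\in L^2_t(L^{3/2}_x)$ in $d=3$, not $L^2(Q_T)$. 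The bounds \eqref{3.west}--\eqref{3.cH2} do not close this. The paper sidesteps the problem entirely by working at the $\delta$-level where the $\eps$-term is already gone.
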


\begin{proof}
Summing \eqref{3.rfei} with $\sigma=1$ over $k=1,\ldots,N$, we find that
\begin{align*}
  \widetilde{\E}^\delta(\widetilde{\bm{c}}^{(\tau)}(\cdot,T))
	&+ \sum_{i,j=1}^{n-1}\int_0^T\int_\Omega \widetilde{B}_{ij}^\delta
	(\widetilde{\bm{c}}^{(\tau)})
	\na w_i^{(\tau)}\cdot\na w_j^{(\tau)} dxdt \\
	&{}+ \eps\sum_{i=1}^n\int_0^T\int_\Omega\big((\Delta w_i^{(\tau)})^2
	+ (w_i^{(\tau)})^2\big)dxdt \le \widetilde{\E}^\delta(\widetilde{\bm{c}}^0).
\end{align*}
We know from \eqref{3.west} and the construction of $\chi_\delta$ that
$(\bm{w}^{(\tau)})$ is bounded in $L^2(0,T;H^1(\Omega))$ and 
$(\widetilde{B}_{ij}^\delta(\widetilde{\bm{c}}))$ is bounded in
$L^\infty(Q_T)$ with respect to $(\eps,\tau)$. 
Therefore, we can pass to the limit $(\eps,\tau)\to 0$ in the previous
inequality, and weak lower semicontinuity of the integral functionals 
leads to \eqref{3.Ed}.

To show \eqref{3.Hd}, we use $(h_i^\delta)'(c_i^\delta)-(h_i^\delta)'(c_n^\delta)$
as a test function in the weak formulation of \eqref{3.cdelta} and sum over
$i=1,\ldots,n-1$:
$$
  \H^\delta(\bm{c}(\cdot,T)) + \sum_{i,j=1}^{n-1}\int_0^T\int_\Omega
	\widetilde{B}_{ij}^\delta(\widetilde{\bm{c}}^\delta)
	\na\big((h_i^\delta)'(c_i^\delta)-(h_i^\delta)'(c_n^\delta)\big)\cdot
	\na w_j^\delta dxdt \le \H^\delta(\bm{c}^0).
$$
This inequality can be rewritten as \eqref{3.Hd} using 
$w_i^\delta=\mu_i^\delta-\mu_n^\delta$. Finally, we derive \eqref{3.EHd}
by combining \eqref{3.Hd} and \eqref{3.Ed} and proceeding as in the proof
of Lemma \ref{lem.fei}.
\end{proof}

\subsection{Proof of Theorem \ref{thm.ex}}\label{sec.exproof}

We perform the limit $\delta\to 0$ to finish the proof of Theorem \ref{thm.ex}. 
It follows from \cite[Lemma 2.1]{ElLu91} that for sufficiently small $\delta>0$, 
there exists $C>0$ (independent of $\delta$)
such that for all $r_1,\ldots,r_n\in\R$ satisfying $\sum_{i=1}^n r_i=1$,
\begin{equation}\label{3.lower}
  \sum_{i=1}^n h_i^\delta(r_i)\ge -C.
\end{equation}
Therefore, estimate \eqref{3.EHd} implies that
\begin{equation}
\label{eqnhelp1}
\begin{aligned}
  \sum_{i=1}^n\int_\Omega|\na c_i^\delta(\cdot,T)|^2 dx
	&+ \sum_{i=1}^n\int_0^T\int_\Omega\frac{|\na c_i^\delta|^2}{
	(\chi_\delta\bm{c}^\delta)_i}dxdt 
	+ \sum_{i=1}^n\int_0^T\int_\Omega(\Delta c_i^\delta)^2 dxdt 
	\\ 
	&{}+ \int_0^T\int_\Omega\big|P_L(\chi_\delta\bm{c}^\delta)
	R(\chi_\delta\bm{c}^\delta)\na\bm{\mu}^\delta\big|^2 dxdt \le C,
\end{aligned}
\end{equation}
and the constant $C>0$ depends on $\evmin$, $\lambda_M$, and $\bm{c}^0$.
Mass conservation (or using the test function $\phi_i=1$ in the weak
formulation of \eqref{3.eq}) shows that
$\int_\Omega c_i^\delta(\cdot,T)dx=\int_\Omega c_0^\delta dx$ for any $T>0$,
i.e.\ $\|\bm{c}^\delta\|_{L^\infty(0,T;L^1(\Omega))}\le C$. We conclude from the
Poincar\'e--Wirtinger inequality that
\begin{equation}\label{3.cd}
  \|\bm{c}^\delta\|_{L^\infty(0,T;H^1(\Omega))}
	+ \|\bm{c}^\delta\|_{L^2(0,T;H^2(\Omega))} \le C.
\end{equation}

Next, we estimate $\pa_t c_i^\delta$. Lemma \ref{lem.Bdelta} implies that
the entries of 
$$(D(\chi_\delta\bm{c}^\delta)P_L(\chi_\delta\bm{c}^\delta)
+ P_{L^\perp}(\chi_\delta\bm{c}^\delta))^{-1}$$
are uniformly bounded. Thus,
by the definition of $D^{BD}(\chi_\delta\bm{c}^\delta)$ and \eqref{2.DBD},
$$
  \int_0^T\int_\Omega\bigg|\sum_{j=1}^n B_{ij}^\delta(\bm{c}^\delta)
	\na\mu_j^\delta\bigg|^2 dxdt \le \lambda_M\int_0^T\int_\Omega
	\big|P_L(\chi_\delta\bm{c}^\delta)R(\chi_\delta\bm{c}^\delta)
	\na\bm{\mu}^\delta\big|^2 dxdt,
$$
and the right-hand side is bounded by \eqref{eqnhelp1}. Setting 
$J_i^\delta:=\sum_{j=1}^n B_{ij}^\delta(\bm{c}^\delta)\na\mu_j^\delta$,
this means that $(J_i^\delta)$ is bounded in $L^2(Q_T)$. Therefore, there exists
a subsequence that is not relabeled such that, as $\delta\to 0$,
$$
  J_i^\delta\rightharpoonup J_i\quad\mbox{weakly in }L^2(Q_T).
$$
This implies that
\begin{equation}
\label{eqnhelp2}
  \|\pa_t c_i^\delta\|_{L^2(0,T;H^1(\Omega)')} \le C.
\end{equation}
We conclude from \eqref{3.cd} and \eqref{eqnhelp2}, using the Aubin--Lions lemma, 
that, for a subsequence (if necessary),
\begin{equation}\label{3.cconv}
\begin{aligned}
  c_i^\delta\to c_i &\quad\mbox{strongly in }L^2(0,T;H^1(\Omega)), \\
	c_i^\delta  \stackrel{\star}{\rightharpoonup} c_i 
	&\quad\mbox{weakly-$\star$\, in }L^\infty(0,T;H^1(\Omega)), \\
	c_i^\delta\rightharpoonup c_i &\quad\mbox{weakly in }L^2(0,T;H^2(\Omega)), \\
	\pa_t c_i^\delta\rightharpoonup \pa_t c_i &\quad\mbox{weakly in }
	L^2(0,T;H^1(\Omega)').
\end{aligned}
\end{equation}
Performing the limit $\delta\to 0$ in \eqref{3.eq}, we see that 
$\pa_t c_i=\diver J_i$ holds in the sense of $L^2(0,T;H^1(\Omega)')$. 

We prove that $c_i\ge 0$ in $Q_T$, $i=1,\ldots,n$,
following \cite{ElLu91}.
By definition \eqref{3.hdelta} and the lower bound \eqref{3.lower}, we have
for $0<\delta<1$,
\begin{align*}
  C&\ge \int_\Omega h_i^\delta(c_i^\delta)dx \ge -C 
	+ \int_{\{c_i^\delta<\delta\}}\bigg(c_i^\delta\log\delta
	- \frac{\delta}{2} + \frac{(c_i^\delta)^2}{2\delta}\bigg)dx \\
	&\ge -C + \int_{\{c_i^\delta<0\}}c_i^\delta\log\delta dx 
	+ \int_{\{0<c_i^\delta<\delta\}}c_i^\delta\log\delta dx - C\delta \\
	&\ge -C + \log\delta\int_{\{c_i^\delta<0\}}c_i^\delta dx
	+ C\delta\log\delta - C\delta.
\end{align*}
Hence, we obtain
$$
  \int_\Omega \max\{0,-c_i^\delta\}dx 
	= \int_{\{c_i^\delta<0\}}|c_i^\delta|dx \le \frac{C}{|\log\delta|}.
$$
The limit $\delta\to 0$ leads to
$$
  \int_\Omega\max\{0,-c_i\}dx \le 0,
$$
implying that $c_i\ge 0$ in $Q_T$. The limit $\delta\to 0$ in
$\sum_{i=1}^n c_i^\delta=1$ gives $\sum_{i=1}^n c_i=1$, hence $c_i\le 1$ 
holds in $Q_T$.

Next, we identify $J_i$ by showing that $J_i=\sum_{j=1}^n 
B_{ij}(\bm{c})\na(\log c_j-\Delta c_j)$ in the sense of distributions.
Inserting the definition of $\mu_i^\delta$ and choosing a test function
$\phi_i\in L^\infty(0,T;$ $W^{2,\infty}(\Omega))$ satisfying $\na\phi_i\cdot\nu=0$
on $\pa\Omega$, we find that
\begin{align}
  \int_0^T&\int_\Omega J_i^\delta\cdot\na\phi_i dxdt
	= \sum_{j=1}^n\int_0^T\int_\Omega B_{ij}^\delta(\bm{c}^\delta)
	\na\phi_i\cdot\na\big((h_j^\delta)'(c_j^\delta)-\Delta c_j^\delta\big) dxdt 
	\nonumber \\
	&= \sum_{j=1}^n\int_0^T\int_\Omega B_{ij}^\delta(\bm{c}^\delta)
	\na\phi_i\cdot\na(h_j^\delta)'(c_j^\delta)dxdt
	+ \sum_{j=1}^n\int_0^T\int_\Omega\Delta c_j^\delta\diver(B_{ij}^\delta(\bm{c}^\delta)
	\na\phi_i)dxdt \label{3.Jdelta} \\
	&=: I_5 + I_6. \nonumber
\end{align}
By definition \eqref{3.Bdelta} of $B_{ij}^\delta(\bm{c}^\delta)$, we have
$$
  I_5 = \sum_{j=1}^n\int_0^T\int_\Omega \sqrt{(\chi_\delta\bm{c}^\delta)_i}
	D_{ij}^{BD}(\chi_\delta\bm{c}^\delta)\na\phi_i\cdot
	\frac{\na c_j^\delta}{\sqrt{(\chi_\delta\bm{c}^\delta)_j}}dxdt.
$$
Lemma \ref{lem.DB} shows that $\sqrt{c_i}D_{ij}^{BD}(\bm{c})/\sqrt{c_j}$ is
bounded in $[0,1]^n$ and in particular when $c_k=0$ for some index $k$.
The strong convergence $\bm{c}^\delta\to \bm{c}$ implies that
$\chi_\delta\bm{c}^\delta\to\bm{c}$ in $L^q(0,T;L^q(\Omega))$ for any $q<\infty$
such that
$$
  I_5\to \sum_{j=1}^n\int_0^T\int_\Omega\sqrt{c_i}D_{ij}^{BD}(\bm{c})
	\frac{1}{\sqrt{c_j}}\na\phi_i\cdot\na c_j dxdt
	= \sum_{j=1}^n\int_0^T\int_\Omega B_{ij}(\bm{c})\na\phi_i\cdot\na\log c_j dxdt.
$$

The limit in $I_6$ is more involved. We decompose $I_6=I_{61}+I_{62}$, where
$$
  I_{61} = \sum_{j=1}^n\int_0^T\int_\Omega\Delta c_j^\delta 
	B_{ij}^\delta(\bm{c}^\delta)\Delta\phi_i dxdt, \quad
	I_{62} = \sum_{j=1}^n\int_0^T\int_\Omega\Delta c_j^\delta
	\na B_{ij}^\delta(\bm{c}^\delta)\cdot\na\phi_i dxdt.
$$
We deduce from the strong convergence of $\bm{c}^\delta$ and the weak
convergence of $\Delta c_j^\delta$ that
$$
  I_{61} \to \sum_{j=1}^n\int_0^T\int_\Omega\Delta c_j B_{ij}(\bm{c})\Delta\phi_i dxdt.
$$
To show the convergence of $I_{62}$, we consider
\begin{align*}
  \int_0^T&\int_\Omega\big|\na\big(B_{ij}^\delta(\bm{c}^\delta)-B_{ij}(\bm{c})\big)
	\big|^2 dxdt \\
	&= \int_0^T\int_\Omega\bigg|\sum_{k=1}^n\bigg\{\bigg(
	\frac{\pa B_{ij}^\delta}{\pa c_k}(\bm{c}^\delta) - \frac{\pa B_{ij}}{\pa c_k}(\bm{c})
	\bigg)\na c_k + \frac{\pa B_{ij}^\delta}{\pa c_k}(\bm{c}^\delta)
	\na(c_k^\delta-c_k)\bigg\}\bigg|^2 dxdt.
\end{align*}
By Lemma \ref{lem.DB} (i), $\pa D_{ij}^{BD}/\pa c_k$ exists and is bounded
in $[0,1]^n$. Then, by the definition of $B_{ij}(\bm{c})$, we have
$(\pa B_{ij}^\delta/\pa c_k)(\bm{c}^\delta)\to (\pa B_{ij}/\pa c_k)(\bm{c})$
strongly in $L^2(Q_T)$. It follows from $\na c_k^\delta\to \na c_k$ strongly 
in $L^2(Q_T)$ that the right-hand side of the previous identity
converges to zero. We infer that 
$$
  I_{62}\to \sum_{j=1}^n\int_0^T\int_\Omega\Delta c_j \na B_{ij}(\bm{c})\cdot
	\na\phi_i dxdt.
$$
Consequently, we have
\begin{align*}
  I_6 \to  &\sum_{j=1}^n\int_0^T\int_\Omega\Delta c_j\big(B_{ij}(\bm{c})\Delta\phi_i
	+  \na B_{ij}(\bm{c})\cdot\na\phi_i\big) dxdt \\
  &= \sum_{j=1}^n\int_0^T\int_\Omega\Delta c_j\diver(B_{ij}(\bm{c})\na\phi_i)dxdt.
\end{align*}

We have shown that \eqref{3.Jdelta} becomes in the limit $\delta\to 0$
$$
  \int_0^T\int_\Omega J_i\cdot\na\phi dxdt
	= \sum_{j=1}^n\int_0^T\int_\Omega\big(B_{ij}(\bm{c})\na\phi_i\cdot\na\log c_j
	+ \Delta c_j\diver(B_{ij}(\bm{c})\na\phi_i)\big)dxdt
$$
and hence, in the sense of distributions,
$$
  J_i = \sum_{j=1}^n B_{ij}(\bm{c})\na(\log c_j-\Delta c_j), \quad i=1,\ldots,n.
$$

{\em Step 2: Energy and entropy inequalities.}
The limit $c_i^\delta\rightharpoonup c_i$ weakly-$\star$ in $L^\infty(0,T;H^1(\Omega))$
(see \eqref{3.cconv}) and the weak lower semicontinuity of the energy and 
entropy show that
$$
  \H(\bm{c}(\cdot,T))\le \liminf_{\delta\to 0}\H^\delta(\bm{c}^\delta(\cdot,T)), \quad
	\E(\bm{c}(\cdot,T))\le \liminf_{\delta\to 0}\E^\delta(\bm{c}^\delta(\cdot,T)).
$$
Moreover, because of the weak convergence of $\Delta c_i^\delta$ 
in $L^2(Q_T)$ from \eqref{3.cconv},
$$
  \sum_{i=1}^n\int_0^T\int_\Omega(\Delta c_i)^2 dxdt
	\le \liminf_{\delta\to 0}\sum_{i=1}^n\int_0^T\int_\Omega(\Delta c_i^\delta)^2 dxdt.
$$
The combined energy-entropy inequality \eqref{3.EHd} and the property
$|\na(\chi_\delta\bm{c}^\delta)_i|\le|\na c_i^\delta|$ give
$$
  \big\|\na\sqrt{(\chi_\delta\bm{c}^\delta)_i}\big\|_{L^2(Q_T)}
	= \frac12\bigg\|\frac{\na c_i^\delta}{\sqrt{(\chi_\delta\bm{c}^\delta)_i}}
	\bigg\|_{L^2(Q_T)} \le C,
$$
which, together with $(\chi_\delta\bm{c}^\delta)_i\to c_i$ strongly in
$L^2(Q_T)$ leads to
\begin{equation}\label{3.sqrtc}
  \na\sqrt{(\chi_\delta\bm{c}^\delta)_i}\rightharpoonup\na\sqrt{c_i}
	\quad\mbox{weakly in }L^2(Q_T).
\end{equation}
We conclude that
$$
  \|\na\sqrt{c_i}\|_{L^2(Q_T)}\le \liminf_{\delta\to 0}
	\big\|\na\sqrt{(\chi_\delta\bm{c}^\delta)_i}\big\|_{L^2(Q_T)}.
$$

Finally, by \eqref{3.EHd}, we observe that $P_L(\chi_\delta\bm{c}^\delta)
R(\chi_\delta\bm{c}^\delta)\na\bm{\mu}^\delta$ is uniformly bounded
in $L^2(Q_T)$ such that, up to a subsequence,
$$
  P_L(\chi_\delta\bm{c}^\delta)R(\chi_\delta\bm{c}^\delta)\na\bm{\mu}^\delta
  \rightharpoonup \bm{\zeta}\quad\mbox{weakly in }L^2(Q_T).
$$
Hence, again by weak lower semicontinuity of the norm,
$$
  \|\bm{\zeta}\|_{L^2(0,T;L^2(\Omega))}
	\le\liminf_{\delta\to 0}\big\|(P_L(\chi_\delta\bm{c}^\delta)
	R(\chi_\delta\bm{c}^\delta)\na\bm{\mu}^\delta\big\|_{L^2(0,T;L^2(\Omega))}.
$$
It remains to take the limit inferior $\delta\to 0$ in \eqref{3.EHd}
to conclude that the combined energy-entropy inequality \eqref{1.EH} holds.

\begin{lemma}[Identification of $\bm{\zeta}$]\label{lem.ident}
Let \eqref{1.cregul} hold and let $\bm{\zeta}$ be the weak $L^2(Q_T)$ limit
of $P_L(\chi_\delta\bm{c}^\delta)R(\chi_\delta\bm{c}^\delta)\na\bm{\mu}^\delta$.
Then $\bm{\zeta}=P_L(\bm{c})R(\bm{c})\na\bm{\mu}$.
\end{lemma}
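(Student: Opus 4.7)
My plan is to exploit the Bott--Duffin factorization $B(\bm{c})=R(\bm{c})D^{BD}(\bm{c})R(\bm{c})$ together with the flux identification already performed in Step~1 of Section~\ref{sec.exproof}. Using the identity $D^{BD}P_L=D^{BD}$ (cf.\ \cite[(81)]{HJT21}), I rewrite the approximate flux as
\[
J_i^\delta=\sum_{j=1}^n B_{ij}^\delta(\bm{c}^\delta)\na\mu_j^\delta
=\sqrt{(\chi_\delta\bm{c}^\delta)_i}\sum_{k=1}^n D_{ik}^{BD}(\chi_\delta\bm{c}^\delta)\zeta_k^\delta.
\]
By Lemma~\ref{lem.DB}(i) and (iii), the factor $\sqrt{(\chi_\delta\bm{c}^\delta)_i}D_{ik}^{BD}(\chi_\delta\bm{c}^\delta)$ is uniformly bounded and converges strongly in $L^q(Q_T)$ for every $q<\infty$ to $\sqrt{c_i}D_{ik}^{BD}(\bm{c})$; combining this with the weak $L^2(Q_T)$ convergence $\zeta_k^\delta\rightharpoonup\zeta_k$ yields the $L^2$ identity $J_i=\sqrt{c_i}\sum_k D_{ik}^{BD}(\bm{c})\zeta_k$.

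Under assumption \eqref{1.cregul}, the vector $\bm{\zeta}^\ast:=P_L(\bm{c})R(\bm{c})\na\bm{\mu}$ belongs to $L^2(Q_T)$, and \eqref{1.regflux} combined with the same factorization gives
\[
\sum_{j=1}^n B_{ij}(\bm{c})\na\mu_j=\sqrt{c_i}\sum_{k=1}^n D_{ik}^{BD}(\bm{c})\zeta_k^\ast.
\]
Section~\ref{sec.exproof} identifies $J_i$ with this same expression as an $L^2$ function, so subtracting yields
\[
\sqrt{c_i}\sum_{k=1}^n D_{ik}^{BD}(\bm{c})(\zeta_k-\zeta_k^\ast)=0\quad\text{a.e.\ in }Q_T,\ i=1,\ldots,n.
\]
Moreover, passing to the weak $L^2$ limit in the pointwise constraint $\sum_i\sqrt{(\chi_\delta\bm{c}^\delta)_i}\zeta_i^\delta=0$ (a strong-weak product) shows $\bm{\zeta}\in L(\bm{c})$ a.e., so that $\bm{y}:=\bm{\zeta}-\bm{\zeta}^\ast\in L(\bm{c})$.

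To deduce $\bm{y}=0$, I use the bijectivity of $D^{BD}(\bm{c})|_{L(\bm{c})}\colon L(\bm{c})\to L(\bm{c})$ with inverse $D(\bm{c})|_{L(\bm{c})}$ from Lemma~\ref{lem.Dz}. Setting $\bm{w}:=D^{BD}(\bm{c})\bm{y}\in L(\bm{c})$, the identity above becomes $\sqrt{c_i}w_i=0$, so $w_i=0$ wherever $c_i>0$; i.e., $\bm{w}$ is supported componentwise on the degeneracy set $I_0(x,t):=\{i:c_i(x,t)=0\}$. The bound $|D_{ij}(\bm{c})|\le M\sqrt{c_j/c_i}$ from Assumption~(B4), together with symmetry of $D$, forces $D_{ij}(\bm{c})=0$ whenever exactly one of $c_i$, $c_j$ vanishes, so $D(\bm{c})$ block-decouples the index sets $I_0$ and $I_0^c$. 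Hence $\bm{y}=D(\bm{c})\bm{w}$ is also supported on $I_0$, giving $\zeta_i=\zeta_i^\ast$ a.e.\ on $\{c_i>0\}$. On the remaining set $\{c_i=0\}$, the identity $P_L(\bm{c})_{ij}=\delta_{ij}$ there makes $\zeta_i^\ast=\sqrt{c_i}\na\mu_i$ vanish in $L^2$ (since $\na\sqrt{c_i}=0$ a.e.\ on $\{c_i=0\}$ and the remaining distributional term from \eqref{1.regc} carries a factor $\sqrt{c_i}$), while an analogous localization argument applied to $\zeta_i^\delta=\sqrt{(\chi_\delta\bm{c}^\delta)_i}(\cdots)$ forces $\zeta_i=0$ a.e.\ there.

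I expect this last step, the control of both $\bm{\zeta}$ and $\bm{\zeta}^\ast$ on the degeneracy set $\{c_i=0\}$ where $R(\bm{c})$ has nontrivial kernel, to be the main obstacle. The essential mechanism enabling it is Assumption~(B4), which decouples vanishing components from positive ones at the level of the friction matrix $D$ and rules out spurious members of the kernel that would otherwise survive.
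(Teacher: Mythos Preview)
Your approach is genuinely different from the paper's. The paper proceeds directly: for $\phi_i\in C_0^\infty(Q_T)$ it writes out
\[
\sum_{j}\int_{Q_T}\Big(P_L(\chi_\delta\bm{c}^\delta)_{ij}\sqrt{(\chi_\delta\bm{c}^\delta)_j}\,\na\mu_j^\delta
- P_L(\bm{c})_{ij}\sqrt{c_j}\,\na\mu_j\Big)\cdot\na\phi_i\,dxdt,
\]
inserts $\mu_j^\delta=(h_j^\delta)'(c_j^\delta)-\Delta c_j^\delta$ and $\mu_j=\log c_j-\Delta c_j$, integrates by parts to push $\na\Delta c_j^\delta$ onto the test function, and then passes to the limit term by term using the convergences \eqref{3.cconv} and \eqref{3.sqrtc}. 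No inversion of $D^{BD}$, no kernel analysis, and the degeneracy set $\{c_i=0\}$ is never isolated.

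Your indirect route through the flux $J_i$ is structurally sound up to the point where you obtain $R(\bm{c})D^{BD}(\bm{c})\bm{y}=0$ with $\bm{y}=\bm{\zeta}-\bm{\zeta}^\ast\in L(\bm{c})$; the block-decoupling via Assumption~(B4) and the symmetry of $D$ is correct and yields $y_i=0$ whenever $c_i>0$. The genuine gap is the final step on $\{c_i=0\}$, which you yourself flag as the main obstacle. The claim that $\zeta_i^\ast=0$ a.e.\ there is not justified: $\zeta_i^\ast$ is an $L^2$ function defined only through the distributional pairing in \eqref{1.regc}, which involves $\na\big(P_L(\bm{c})_{ij}\sqrt{c_j}\big)$, and there is no reason its pointwise representative should vanish on $\{c_i=0\}$. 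The parallel claim for $\zeta_i$ is weaker still: $\zeta_i^\delta$ carries a factor $\sqrt{(\chi_\delta\bm{c}^\delta)_i}\ge\sqrt{\delta}>0$, not $\sqrt{c_i}$, and weak $L^2$ convergence does not localize to level sets of the limit. Since the constraint $\bm{y}\in L(\bm{c})$ is vacuous on $I_0$ (all weights $\sqrt{c_i}$ vanish there) and $R(\bm{c})\bm{w}=0$ gives no information on $I_0$ either, your scheme has no remaining mechanism to force $\bm{y}|_{I_0}=0$. The paper's direct argument sidesteps this entirely by never attempting to invert through the degenerate factor $R(\bm{c})$.
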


\begin{proof}
Let $\phi_i\in C_0^\infty(Q_T)$ be a test function. Then, inserting the definition
$\mu_j^\delta=(h_j^\delta)'(c_j^\delta)-\Delta c_j^\delta$ and integrating by parts,
\begin{align}
  \sum_{j=1}^n&\int_0^T\int_\Omega\Big(P_L(\chi_\delta\bm{c}^\delta)_{ij}
	\sqrt{(\chi_\delta\bm{c}^\delta)_j}\na\mu_j^\delta
	- P_L(\bm{c})_{ij}\sqrt{c_j}\na\mu_j\Big)\cdot\na\phi_i dxdt \nonumber \\
	&= \sum_{j=1}^n\int_0^T\int_\Omega\Big(P_L(\chi_\delta\bm{c}^\delta)_{ij}
	\sqrt{(\chi_\delta\bm{c}^\delta)_j}\na (h_j^\delta)'(c_j^\delta)
	- P_L(\bm{c})_{ij}\sqrt{c_j}\na\log c_j\Big)\cdot\na\phi_i dxdt \label{4.id} \\
	&\phantom{xx}{}+ \sum_{j=1}^n\int_0^T\int_\Omega\diver\Big\{
	\Big(P_L(\chi_\delta\bm{c}^\delta)_{ij}
	\sqrt{(\chi_\delta\bm{c}^\delta)_j}-P_L(\bm{c})_{ij}\sqrt{c_j}\Big)\na\phi_i\Big\}
	\Delta c^\delta_j dxdt \nonumber \\
	&\phantom{xx}{}+ \sum_{j=1}^n\int_0^T\int_\Omega\diver\big(P_L(\bm{c})_{ij}\sqrt{c_j}
	\na\phi_i\big)\Delta(c_j^\delta-c_j)dxdt. \nonumber
\end{align}
The bracket in the first integral on the right-hand side can be written as
\begin{align*}
  P_L(&\chi_\delta\bm{c}^\delta)_{ij}
	\sqrt{(\chi_\delta\bm{c}^\delta)_j}\na (h_j^\delta)'(c_j^\delta)
	- P_L(\bm{c})_{ij}\sqrt{c_j}\na\log c_j \\
	&= P_L(\chi_\delta\bm{c}^\delta)_{ij}
	\frac{\na c_j^\delta}{\sqrt{(\chi_\delta\bm{c}^\delta)_j}}
	- P_L(\bm{c})_{ij}\frac{\na c_j}{\sqrt{c_j}}.
\end{align*}
Thanks to the convergences \eqref{3.cconv} and \eqref{3.sqrtc}, we can pass to
the limit $\delta\to 0$ in \eqref{4.id}:
$$
  \lim_{\delta\to 0}\sum_{j=1}^n\int_0^T\int_\Omega
	\Big(P_L(\chi_\delta\bm{c}^\delta)_{ij}
	\sqrt{(\chi_\delta\bm{c}^\delta)_j}\na\mu_j^\delta
	- P_L(\bm{c})_{ij}\sqrt{c_j}\na\mu_j\Big)\cdot\na\phi_i dxdt = 0.
$$
By the uniqueness of the limit, the claim
$\bm{\zeta}=P_L(\bm{c})R(\bm{c})\na\bm{\mu}$ follows. 
\end{proof}


\section{Proof of Theorem \ref{thm.wsu}}\label{sec.wsu}

In this section, we prove the weak-strong uniqueness property.
First, we compute a combined {\em relative} energy-entropy inequality.
Then we use this inequality to derive a stability estimate, which leads
to the desired weak-strong uniqueness result.

\subsection{Evolution of the relative energy and entropy}\label{sec.wsu.formal}

We start by calculating the time evolution of the relative entropy \eqref{1.relH}
and the relative energy \eqref{1.relE} for {\em smooth} solutions $\bm{c}$ and
$\bar{\bm{c}}$. Inserting \eqref{2.B} and integrating by parts leads to
\begin{align*}
  \frac{d\H}{dt}(\bm{c}|\bar{\bm{c}})
	&= \sum_{i=1}^n\int_\Omega\bigg(\log\frac{c_i}{\bar{c}_i}\pa_t c_i
	- \bigg(\frac{c_i}{\bar{c}_i}-1\bigg)\pa_t\bar{c}_i\bigg)dx \\
	&= -\sum_{i,j=1}^n\int_\Omega B_{ij}(\bm{c})
	\na\log\frac{c_i}{\bar{c}_i}\cdot\na\mu_j dx
	+ \sum_{i,j=1}^n\int_\Omega B_{ij}(\bar{\bm{c}})\na\bigg(\frac{c_i}{\bar{c}_i}
	\bigg)\cdot\na\bar{\mu}_j dx \\
	&= -\sum_{i,j=1}^n\int_\Omega B_{ij}(\bm{c})\na\log\frac{c_i}{\bar{c}_i}
	\cdot\na(\mu_j-\bar{\mu}_j)dx \\
	&\phantom{xx}{}
	- \sum_{i,j=1}^n\int_\Omega\bigg(B_{ij}(\bm{c}) - \frac{c_i}{\bar{c}_i}
	B_{ij}(\bar{\bm{c}})\bigg)\na\log\frac{c_i}{\bar{c}_i}\cdot\na\bar{\mu}_j dx.
\end{align*}

Next, we compute
\begin{align}
  \frac{d\E}{dt}(\bm{c}|\bar{\bm{c}}) 
	&= \sum_{i=1}^n\int_\Omega\bigg(\log\frac{c_i}{\bar{c}_i}\pa_t c_i
	- \bigg(\frac{c_i}{\bar{c}_i}-1\bigg)\pa_t\bar{c}_i\bigg)dx
	+ \sum_{i=1}^n\int_\Omega\na(c_i-\bar{c}_i)\cdot\na\pa_t(c_i-\bar{c}_i)dx \nonumber \\
	&= \sum_{i=1}^n\bigg\{\bigg(\log\frac{c_i}{\bar{c}_i} - \Delta(c_i-\bar{c}_i)\bigg)
	\pa_t c_i - \bigg(\frac{c_i}{\bar{c}_i}-1-\Delta(c_i-\bar{c}_i)\bigg)\pa_t\bar{c}_i
	\bigg\}dx \label{4.aux0} \\
	&= -\sum_{i,j=1}^n\int_\Omega B_{ij}(\bm{c})\na(\mu_i-\bar{\mu}_i)\cdot\na\mu_j dx 
	\nonumber \\
	&\phantom{xx}{}+ \sum_{i,j=1}^n\int_\Omega B_{ij}(\bar{\bm{c}})
	\na\bigg(\frac{c_i}{\bar{c}_i}-1-\Delta(c_i-\bar{c}_i)\bigg)\cdot
	\na\bar{\mu}_j dx. \nonumber
\end{align}
We add and subtract the expression $\sum_{i=1}^n\int_\Omega
B_{ij}(\bm{c})\na(\mu_i-\bar{\mu}_i)\cdot\na\bar{\mu}_j dx$:
\begin{align}
  \frac{d\E}{dt}(\bm{c}|\bar{\bm{c}}) 
	&= -\sum_{i=1}^n\int_\Omega B_{ij}(\bm{c})\na(\mu_i-\bar{\mu}_i)
	\cdot\na(\mu_j-\bar{\mu}_j) dx \nonumber \\
	&\phantom{xx}{}+ \sum_{i,j=1}^n\int_\Omega\bigg\{B_{ij}(\bar{\bm{c}})
	\bigg(\frac{c_i}{\bar{c}_i}\na\log\frac{c_i}{\bar{c}_i} - \na\Delta(c_i-\bar{c}_i)
	\bigg) - B_{ij}(\bm{c})\na(\mu_i-\bar{\mu}_i)\bigg\}\cdot\na\bar{\mu}_j dx 
	\label{4.aux} \\
	&= -\sum_{i,j=1}^n\int_\Omega B_{ij}(\bm{c})\na(\mu_i-\bar{\mu}_i)
	\cdot\na(\mu_j-\bar{\mu}_j) dx \nonumber \\
	&\phantom{xx}{}- \sum_{i,j=1}^n\int_\Omega\bigg(B_{ij}(\bm{c})
	- \frac{c_i}{\bar{c}_i}B_{ij}(\bar{\bm{c}})\bigg)\na(\mu_i-\bar{\mu}_i)\cdot
	\na\bar{\mu}_j dx \nonumber \\
	&\phantom{xx}{}+ \sum_{i,j=1}^n\int_\Omega B_{ij}(\bar{\bm{c}})
	\bigg(\frac{c_i}{\bar{c}_i}-1\bigg)\na\Delta(c_i-\bar{c}_i)\cdot\na\bar{\mu}_j dx.
	\nonumber 
\end{align}
We want to reformulate the expression $\bar{c}_i^{-1}(c_i-\bar{c}_i)
\na\Delta(c_i-\bar{c}_i)$ in the last integral. 
For this, we observe that for any smooth function $f$, it holds that
\begin{align*}
  f\na\Delta f &= \na(f\Delta f) - \na f\Delta f
	= \na\big(\diver(f\na f)- |\na f|^2\big)
	- \diver(\na f\otimes\na f) + \frac12\na|\na f|^2 \\
	&= \na\diver(f\na f) - \frac12\na|\na f|^2 - \diver(\na f\otimes\na f).
\end{align*}
Therefore,
\begin{align*}
  (c_i-\bar{c}_i)\na\Delta(c_i-\bar{c}_i)
	&= \na\diver\big((c_i-\bar{c}_i)\na(c_i-\bar{c}_i)\big) 
	- \frac12\na|\na(c_i-\bar{c}_i)|^2 \\
	&\phantom{xx}{}- \diver\big(\na(c_i-\bar{c}_i)\otimes\na(c_i-\bar{c}_i)\big).
\end{align*}
Inserting this expression into the last term of \eqref{4.aux} and integrating
by parts, we find that
\begin{align*}
  \frac{d\E}{dt}(\bm{c}|\bar{\bm{c}}) 
	&= -\sum_{i=1}^n\int_\Omega B_{ij}(\bm{c})\na(\mu_i-\bar{\mu}_i)
	\cdot\na(\mu_j-\bar{\mu}_j) dx \\
	&\phantom{xx}{}- \sum_{i,j=1}^n\int_\Omega\bigg(B_{ij}(\bm{c})
	- \frac{c_i}{\bar{c}_i}B_{ij}(\bar{\bm{c}})\bigg)\na(\mu_i-\bar{\mu}_i)\cdot
	\na\bar{\mu}_j dx \\
	&\phantom{xx}{}+ \sum_{i,j=1}^n\int_\Omega (c_i-\bar{c}_i)\na(c_i-\bar{c}_i)
	\cdot\na\diver\bigg(\frac{1}{\bar{c}_i}B_{ij}(\bar{\bm{c}})\na\bar{\mu}_j\bigg)dx \\
	&\phantom{xx}{}+ \frac12\sum_{i,j=1}^n\int_\Omega|\na(c_i-\bar{c}_i)|^2
	\diver\bigg(\frac{1}{\bar{c}_i}B_{ij}(\bar{\bm{c}})\na\bar{\mu}_j\bigg)dx \\
	&\phantom{xx}{}+ \sum_{i,j=1}^n\int_\Omega
	\na(c_i-\bar{c}_i)\otimes\na(c_i-\bar{c}_i)
	:\na\otimes\bigg(\frac{1}{\bar{c}_i}B_{ij}(\bar{\bm{c}})\na\bar{\mu}_j\bigg)dx,
\end{align*}
where $\na\otimes(\bar{c}_i^{-1}B_{ij}(\bar{\bm{c}})\na\bar{\mu}_j)$ is
a matrix with entries $\pa_{x_k}(\bar{c}_i^{-1}B_{ij}(\bar{\bm{c}})
\pa_{x_\ell}\bar{\mu}_j)$
for $k,\ell=1,\ldots,n$ and ``:'' denotes the Frobenius matrix product.

The following lemma states the rigorous result. Since we suppose that 
the weak solution satisfies energy and entropy {\em inequalities}
instead of {\em equalities}, we obtain also inequalities for the relative
energy and entropy.

\begin{lemma}[Relative energy and entropy]
Let $\bm{c}$ and $\bar{\bm{c}}$ be a weak and strong solution to
\eqref{1.eq1}--\eqref{1.mu} with initial data $\bm{c}^0$ and $\bar{\bm{c}}^0$,
respectively. Assume that $\bm{c}$ satisfies the regularity \eqref{1.cregul}
and the energy and entropy inequalites \eqref{1.dEdt}--\eqref{1.dHdt}.
Furthermore, we suppose that $\bar{\bm{c}}$ is strictly positive and
satisfies the regularity
$$
  \bar{\mu}_i = \log\bar{c}_i-\Delta\bar{c}_i\in L_{\rm loc}^2(0,\infty;H^2(\Omega)), 
	\quad \bar{c}_i\in L_{\rm loc}^\infty(0,\infty;W^{3,\infty}(\Omega)),
	\quad i=1,\ldots,n.
$$
Then the following relative energy and entropy inequalities hold for any $T>0$:
\begin{align}\label{4.relE}
  \E(&\bm{c}(T)|\bar{\bm{c}}(T))
	+ \sum_{i=1}^n\int_0^T\int_\Omega B_{ij}(\bm{c})\na(\mu_i-\bar{\mu}_i)
	\cdot\na(\mu_j-\bar{\mu}_j)dxdt \\
	&\le \E(\bm{c}^0|\bar{\bm{c}}^0)
	- \sum_{i,j=1}^n\int_0^T\int_\Omega\bigg(B_{ij}(\bm{c})
	- \frac{c_i}{\bar{c}_i}B_{ij}(\bar{\bm{c}})\bigg)\na(\mu_i-\bar{\mu}_i)\cdot
	\na\bar{\mu}_j dxdt \nonumber \\
	&\phantom{xx}{}+ \sum_{i,j=1}^n\int_0^T\int_\Omega (c_i-\bar{c}_i)\na(c_i-\bar{c}_i)
	\cdot\na\diver\bigg(\frac{1}{\bar{c}_i}B_{ij}(\bar{\bm{c}})\na\bar{\mu}_j\bigg)dxdt 
	\nonumber \\
	&\phantom{xx}{}+ \frac12\sum_{i,j=1}^n\int_0^T\int_\Omega|\na(c_i-\bar{c}_i)|^2
	\diver\bigg(\frac{1}{\bar{c}_i}B_{ij}(\bar{\bm{c}})\na\bar{\mu}_j\bigg)dxdt 
	\nonumber \\
	&\phantom{xx}{}+ \sum_{i,j=1}^n\int_0^T\int_\Omega
	\na(c_i-\bar{c}_i)\otimes\na(c_i-\bar{c}_i)
	:\na\otimes\bigg(\frac{1}{\bar{c}_i}B_{ij}(\bar{\bm{c}})\na\bar{\mu}_j\bigg)dxdt, 
	\nonumber \\
	\H(&\bm{c}(T)|\bar{\bm{c}}(T)) \le \H(\bm{c}^0|\bar{\bm{c}}^0)
	- \sum_{i,j=1}^n\int_0^T\int_\Omega B_{ij}(\bm{c})\na\log\frac{c_i}{\bar{c}_i}
	\cdot\na(\mu_j-\bar{\mu}_j)dxdt \label{4.relH} \\
	&\phantom{xx}{}- \sum_{i,j=1}^n\int_0^T\int_\Omega
  \bigg(B_{ij}(\bm{c}) - \frac{c_i}{\bar{c}_i}B_{ij}(\bar{\bm{c}})\bigg)
	\na\log\frac{c_i}{\bar{c}_i}\cdot\na\bar{\mu}_j dxdt. \nonumber
\end{align}
\end{lemma}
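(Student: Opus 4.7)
The plan is to promote the formal computation given above for smooth solutions to the present weak–strong setting, using the energy/entropy inequalities for $\bm c$, the energy/entropy equalities for $\bar{\bm c}$, and the weak formulation \eqref{1.weak} tested against smooth functions built out of $\bar{\bm c}$. The positivity $\bar c_i\ge m$ and regularity $\bar c_i\in L_{\rm loc}^\infty(0,\infty;W^{3,\infty}(\Omega))$ ensure that $\log\bar c_i$ and $\bar\mu_i=\log\bar c_i-\Delta\bar c_i$ have enough Sobolev regularity to be admissible test functions. The key algebraic decomposition, based on $\pa_{c_i}\H(\bar{\bm c})=\log\bar c_i$ and $\pa_{c_i}\E(\bar{\bm c})=\bar\mu_i$, is
\[
  \H(\bm c|\bar{\bm c}) = \H(\bm c) - \H(\bar{\bm c}) - \sum_{i=1}^n\int_\Omega (\log\bar c_i)(c_i-\bar c_i)dx,
\]
and analogously for $\E(\bm c|\bar{\bm c})$ with $\bar\mu_i$ in place of $\log\bar c_i$. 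After differentiating in time, the work therefore reduces to evaluating the cross terms $\frac{d}{dt}\sum_i\int_\Omega(\log\bar c_i)(c_i-\bar c_i)dx$ and $\frac{d}{dt}\sum_i\int_\Omega\bar\mu_i(c_i-\bar c_i)dx$.

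For \eqref{4.relH}, I evaluate the cross term by the product rule: the piece $\int(\log\bar c_i)\pa_t c_i$ is obtained by testing \eqref{1.weak} with $\phi_i=\log\bar c_i$ (whose derivatives up to order three are bounded), the piece $\int(\pa_t\log\bar c_i)(c_i-\bar c_i)$ and the derivative $\frac{d}{dt}\H(\bar{\bm c})$ are computed from the strong equation for $\bar{\bm c}$ via the identity \eqref{1.dHdtbar}. Using $\na\log(c_i/\bar c_i)=\na\log c_i-\na\log\bar c_i$ and the rearrangement $B_{ij}(\bm c)\na\log(c_i/\bar c_i)-(c_i/\bar c_i)B_{ij}(\bar{\bm c})\na\log\bar c_i=B_{ij}(\bm c)\na\log(c_i/\bar c_i)-(B_{ij}(\bm c)-(c_i/\bar c_i)B_{ij}(\bar{\bm c}))\na\log\bar c_i-B_{ij}(\bm c)\na\log\bar c_i$, and splitting $\na\mu_j=\na(\mu_j-\bar\mu_j)+\na\bar\mu_j$, produces the two right-hand side sums in \eqref{4.relH}. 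The fact that the result is an inequality rather than an equality comes solely from the inequality \eqref{1.dHdt} satisfied by $\bm c$.

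For \eqref{4.relE}, the strategy is analogous, with $\bar\mu_i$ as test function. The key technical point is that $\int\bar\mu_i\pa_t c_i$ must not produce $\na\Delta c_i$, which is not known to lie in $L^2$. I therefore use the flux formulation \eqref{1.regflux}, which under assumption \eqref{1.cregul} gives $\sum_j B_{ij}(\bm c)\na\mu_j\in L^2(Q_T)$; integrating by parts once leads to the well-defined $L^1$-quantity $-\int\na\bar\mu_i\cdot\sum_j B_{ij}(\bm c)\na\mu_j$. The term $\pa_t\bar\mu_i(c_i-\bar c_i)$ is controlled using the strong equation and the regularity of $\bar{\bm c}$. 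The delicate third-order contribution $\sum_{i,j}\int(c_i/\bar c_i-1)\na\Delta(c_i-\bar c_i)\cdot\na\bar\mu_j$, arising in the formal calculation \eqref{4.aux}, is handled by the pointwise identity
\[
  f\na\Delta f = \na\diver(f\na f) - \tfrac12\na|\na f|^2 - \diver(\na f\otimes\na f)
\]
applied with $f=c_i-\bar c_i$, and then integrating by parts so that all third-order derivatives fall on $\bar{\bm c}$; this produces the last three integrals of \eqref{4.relE}, whose coefficients $\na\diver(\bar c_i^{-1}B_{ij}(\bar{\bm c})\na\bar\mu_j)$ and $\na\otimes(\bar c_i^{-1}B_{ij}(\bar{\bm c})\na\bar\mu_j)$ are assumed bounded in the hypotheses of Theorem \ref{thm.wsu}.

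The main obstacle is precisely this last transfer of derivatives. The formal identity above holds pointwise for smooth functions, and my plan is to justify it at the weak-solution level by mollifying $c_i-\bar c_i$ in space, applying the identity for the smooth approximants (where every derivative makes classical sense), and then passing to the limit: the strong $L^2(0,T;H^2(\Omega))$-convergence of the mollifications combined with the $L^\infty_{\rm loc}(0,\infty;L^\infty(\Omega))$-regularity of $\na\otimes(\bar c_i^{-1}B_{ij}(\bar{\bm c})\na\bar\mu_j)$ suffices to close the limit. Once this step is done, combining the resulting identity for the cross term with the energy inequality \eqref{1.dEdt} for $\bm c$ and the energy equality \eqref{1.dEdtbar} for $\bar{\bm c}$, and completing the square to generate the dissipation $\int B_{ij}(\bm c)\na(\mu_i-\bar\mu_i)\cdot\na(\mu_j-\bar\mu_j)dx$, yields \eqref{4.relE}.
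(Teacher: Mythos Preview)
Your proposal is correct and follows essentially the same route as the paper: decompose $\H(\bm c|\bar{\bm c})$ and $\E(\bm c|\bar{\bm c})$ via their definitions, handle the cross terms by testing the weak formulation \eqref{1.weak} (for the difference $c_i-\bar c_i$) against $\log\bar c_i$ and $\bar\mu_i$ respectively (the paper uses a time cutoff $\theta_\eps$ and passes $\eps\to 0$ to obtain the evaluation at $t=T$), combine with \eqref{1.dEdt}--\eqref{1.dHdtbar}, and then apply the identity $f\na\Delta f=\na\diver(f\na f)-\tfrac12\na|\na f|^2-\diver(\na f\otimes\na f)$ to shift the third-order derivatives onto $\bar{\bm c}$. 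Your mollification step to justify the latter identity is a reasonable substitute for the paper's direct reference back to the formal computation \eqref{4.aux0}--\eqref{4.aux}.
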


The integrals in \eqref{4.relE} and \eqref{4.relH} are well defined 
because of the regularity
properties for weak solutions $\bm{c}$ and the regularity assumptions 
on the strong solution $\bar{\bm{c}}$.
Indeed, we have $B_{ij}(\bm{c})\na\mu_j\in L^2(Q_T)$ (see \eqref{1.regflux}),
$B_{ij}(\bm{c})\na\log c_i=2D_{ij}^{BD}(\bm{c})\sqrt{c_j}\na\sqrt{c_i}
\in L^2(Q_T)$ (see \eqref{1.EH}), and using the definition \eqref{1.B}, 
the assumption \eqref{1.cregul}, and Lemma \ref{lem.DB} (i), we have
$$
  B_{ij}(\bm{c})\na\mu_i\cdot\na\mu_j 
	= D_{ij}^{BD}(\bm{c})\big(2\na\sqrt{c_i} - \sqrt{c_i}\na\Delta c_i\big)
	\cdot\big(2\na\sqrt{c_j} - \sqrt{c_j}\na\Delta c_j\big)\in L^1(Q_T).
$$

\begin{proof}
The relative energy and entropy inequalities are proved from the weak
formulation of \eqref{1.eq1} by choosing suitable test functions. For this,
we observe that, by \eqref{1.weak}, $c_i-\bar{c}_i$ satisfies 
\begin{align}
  0 &= \int_0^\infty\int_\Omega(c_i-\bar{c}_i)\pa_t\phi_i dxdt  
	+ \int_\Omega ( c_i^0 (x) - \bar c_i^0 (x) ) \phi_i (x,0) dx  \label{4.start} \\
	&\phantom{xx}{} - \sum_{j=1}^n\int_0^\infty\int_\Omega\big(B_{ij}(\bm{c})\na\log c_j
	- B_{ij}(\bar{\bm{c}})\na\log\bar{c}_j\big)\cdot\na\phi_i dxdt \nonumber \\
	&\phantom{xx}{}- \sum_{j=1}^n\int_0^\infty\int_\Omega
	\Big(\diver \big (B_{ij}(\bm{c})\na\phi_i \big) \Delta c_j 
	- \diver \big ( B_{ij}(\bar{c})\na\phi_i \big )
	\Delta\bar{c}_j\Big)dxdt. \nonumber
\end{align}
By density, this formulation also holds for $\phi_i=\bar{\mu}_i\theta_\eps(t)$, where
$$
  \theta_\eps(t) = \left\{\begin{array}{ll}
	1 &\quad\mbox{for }0\le t\le T, \\
	(T-t)/\eps + 1 &\quad\mbox{for }T<t<T+\eps, \\
	0 &\quad\mbox{for }t\ge T+\eps.
	\end{array}\right.
$$
Then, passing to the limit $\eps\to 0$ and summing over $i=1,\ldots,n$, we arrive at
\begin{align*}
  \sum_{i=1}^n&\int_\Omega(c_i-\bar{c}_i)\bar{\mu}_i dx\Bigg|_0^T
	= \sum_{i=1}^n\int_0^T\langle \pa_t\bar{\mu}_i,c_i-\bar{c}_i\rangle dt \\
	&\phantom{xx}{}- \sum_{i,j=1}^n\int_0^T\int_\Omega\big(B_{ij}(\bm{c})
	\na\log c_j\cdot\na\bar{\mu}_i
	+ \diver(B_{ij}(\bm{c})\na\bar{\mu}_i)\Delta c_j\big)dxdt \\
	&\phantom{xx}{}+\sum_{i,j=1}^n\int_0^T\int_\Omega\big(B_{ij}(\bar{\bm{c}})
	\na\log\bar{c}_j\cdot\na\bar{\mu}_i
	+ \diver(B_{ij}(\bar{\bm{c}})\na\bar{\mu}_i)\Delta\bar{c}_j\big)dxdt \\
	&=: I_7 + I_8 + I_9,
\end{align*}
where $\langle\cdot,\cdot\rangle$ is the duality bracket between $H^1(\Omega)'$ and
$H^1(\Omega)$. This product is well defined ,since it holds 
in the sense of $H^1(\Omega)'$ that
$$
  \pa_t\bar{\mu}_i = \pa_t(\log\bar{c}_i - \Delta\bar{c}_i)
	= \sum_{j=1}^n\frac{1}{\bar{c}_i}\diver(B_{ij}(\bar{\bm{c}})\na\bar{\mu}_j)
	- \sum_{j=1}^n\Delta\diver(B_{ij}(\bar{\bm{c}})\na\bar{\mu}_j).
$$
Inserting this expression into $I_7$, the dual
product can be written as an integral:
\begin{align*}
  I_7 &= -\sum_{i,j=1}^n\int_0^T\int_\Omega\bigg(B_{ij}(\bar{\bm{c}})
	\na\bigg(\frac{c_i}{\bar{c}_i}-1\bigg)\cdot\na\bar{\mu}_j
	+ \Delta(c_i-\bar{c}_i)\diver(B_{ij}(\bar{\bm{c}})\na\bar{\mu}_j)\bigg)dxdt \\
	&= -\sum_{i,j=1}^n\int_0^T\int_\Omega B_{ij}(\bar{\bm{c}})
	\na\bigg(\frac{c_i}{\bar{c}_i}-1\bigg)\cdot\na\bar{\mu}_j dxdt \\
	&\phantom{xx}{} - \sum_{i,j=1}^n\int_0^T\int_\Omega\bar{c}_i\Delta(c_i-\bar{c}_i)
	\diver\bigg(\frac{1}{\bar{c}_i}B_{ij}(\bar{\bm{c}})\na\bar{\mu}_j\bigg)dxdt \\
	&\phantom{xx}{}- \sum_{i,j=1}^n\int_0^T\int_\Omega\frac{1}{\bar{c}_i}
	B_{ij}(\bar{\bm{c}})\Delta(c_i-\bar{c}_i)\na\bar{c}_i\cdot\na\bar{\mu}_j dxdt.
\end{align*}
Replacing $\Delta c_j$ by $\log c_j-\mu_j$ in $I_8$ and integrating by parts 
in the term involving the divergence, some terms cancel and we find that
\begin{align*}
  I_8 &= -\sum_{i,j=1}^n\int_0^T\int_\Omega\big( B_{ij}(\bm{c})
	\na\bar{\mu}_i\cdot\na\log c_j + \diver(B_{ij}(\bm{c})\na\bar{\mu}_i)
	(\log c_j-\mu_j)\big)dxdt \\
	&=-\sum_{i,j=1}^n\int_0^T\int_\Omega B_{ij}(\bm{c})\na\bar{\mu}_i\cdot\na\mu_j dxdt.
\end{align*}
Assumption \eqref{1.cregul} guarantees that the flux
has the regularity $\sum_{j=1}^n B_{ij}(\bm{c})\na\mu_j\in L^2(Q_T)$
such that the last integral is defined.
The remaining term $I_9$ is reformulated in a similar way, leading to
$$
  I_9 = \sum_{i,j=1}^n\int_0^T\int_\Omega B_{ij}(\bar{\bm{c}})\na\bar{\mu}_i
	\cdot\na\bar{\mu}_j dxdt.
$$
It follows from the definition of the relative energy, the inequality \eqref{1.dEdt}
for $\E(\bm{c})$, and the identity \eqref{1.dEdtbar} for $\E(\bar{\bm{c}})$ that
\begin{align*}
  \E(&\bm{c}(T)|\bar{\bm{c}}(T)) - \E(\bm{c}^0|\bar{\bm{c}}^0) \\
	&= \big(\E(\bm{c}(T))-\E(\bm{c}^0)\big)
	- \big(\E(\bar{\bm{c}}(T))-\E(\bar{\bm{c}}^0)\big)
	- \int_\Omega\bar{\bm{\mu}}\cdot(\bm{c}-\bar{\bm{c}})dx \Big|_0^T \\
	&\le -\sum_{i,j=1}^n\int_0^T\int_\Omega\big(B_{ij}(\bm{c})\na\mu_i\cdot\na\mu_j
	- B_{ij}(\bar{\bm{c}})\na\bar{\mu}_i\cdot\na\bar{\mu}_j\big)dxdt   
	- (I_7 + I_8 + I_9) \\
	&= -\sum_{i,j=1}^n\int_0^T\int_\Omega B_{ij}(\bm{c})\na(\mu_i-\bar{\mu}_i)
	\cdot\na\mu_j dxdt \\
	&\phantom{xx}{}-\sum_{i,j=1}^n\int_0^T\int_\Omega B_{ij}(\bar{\bm{c}})
	\na\bigg(\frac{c_i}{\bar{c}_i}-1\bigg)\cdot\na\bar{\mu}_j dxdt \\
	&\phantom{xx}{} - \sum_{i,j=1}^n\int_0^T\int_\Omega\bar{c}_i\Delta(c_i-\bar{c}_i)
	\diver\bigg(\frac{1}{\bar{c}_i}B_{ij}(\bar{\bm{c}})\na\bar{\mu}_j\bigg)dxdt \\
	&\phantom{xx}{}- \sum_{i,j=1}^n\int_0^T\int_\Omega\frac{1}{\bar{c}_i}
	B_{ij}(\bar{\bm{c}})\Delta(c_i-\bar{c}_i)\na\bar{c}_i\cdot\na\bar{\mu}_j dxdt.
\end{align*}
This inequality is just a reformulation of \eqref{4.aux0}, which leads,
by proceeding as in \eqref{4.aux} and the subsequent calculations, to \eqref{4.relE}.

Next, we verify the relative entropy inequality. Taking the test function
$\phi_i=(\log\bar{c}_i)\theta_\eps (t)$ in \eqref{4.start}, passing to the limit
$\eps\to 0$, and summing over $i=1,\ldots,n$ leads to
\begin{align*}
  \sum_{i=1}^n&\int_\Omega(c_i-\bar{c}_i)\log\bar{c}_i dx\bigg|_0^T
	= \sum_{i=1}^n\int_0^T \int_\Omega(c_i-\bar{c}_i)\pa_t(\log\bar{c}_i)dxdt \\
	&\phantom{xx}{} - \sum_{j=1}^n\int_0^\infty\int_\Omega\big(B_{ij}(\bm{c})\na\log c_j
	- B_{ij}(\bar{\bm{c}})\na\log\bar{c}_j\big)\cdot\na \log \bar c_i dxdt \nonumber \\
	&\phantom{xx}{}- \sum_{j=1}^n\int_0^\infty\int_\Omega
	\Big(\diver \big (B_{ij}(\bm{c})\na \log \bar c_i  \big) \Delta c_j 
	- \diver \big ( B_{ij}(\bar{c})\na  \log \bar c_i   \big )
	\Delta\bar{c}_j\Big)dxdt. \nonumber
\end{align*}
This yields, together with \eqref{1.dHdt}, \eqref{1.dHdtbar}, an integration by 
parts, and regularity assumption \eqref{1.cregul}, that
\begin{align*}
  \H(&\bm{c}(T)|\bar{\bm{c}}(T)) - \H(\bm{c}^0|\bar{\bm{c}}^0) \\
	&= \big(\H(\bm{c}(T))-\H(\bm{c}^0)\big) - \big(\H(\bar{\bm{c}}(T))-\H(\bar{\bm{c}}^0)
	\big) - \int_\Omega(\bm{c}-\bar{\bm{c}})\cdot \log\bar{\bm{c}}\,dx\bigg|_0^T 
	\\
  &\le -\sum_{i,j=1}^n\int_0^T\int_\Omega  \Big ( B_{ij}(\bm{c}) \na \log c_i 
	\cdot  \na \mu_j -  B_{i j}(\bar{\bm{c}})  \na \log \bar c_i  \cdot 
	\nabla \bar \mu_j  \Big ) dx dt \\
  &\phantom{xx}{} - \sum_{i=1}^n\int_0^T \int_\Omega(c_i-\bar{c}_i)
	\pa_t(\log\bar{c}_i)dxdt \\
  &\phantom{xx}{} +  \sum_{i,j=1}^n\int_0^\infty\int_\Omega
	\Big (  B_{ij}(\bm{c})  \na \mu_j \cdot \na \log \bar c_i   
	- B_{ij}(\bar{\bm{c}}) \na \bar \mu_j \cdot  \na  \log \bar c_i   \big )
	\Big)dxdt. \nonumber \\
 &=  -\sum_{i,j=1}^n\int_0^T\int_\Omega 
	\Big ( B_{ij}(\bm{c}) \na\mu_j \cdot \nabla \bigg ( \log \frac{c_i}{\bar c_i} \bigg )
	-  \nabla \bigg ( \frac{c_i}{\bar c_i} - 1 \bigg ) \cdot B_{i j}(\bar{\bm{c}}) 
	\nabla \bar \mu_j \Big ) dx dt,
\end{align*}
which readily gives \eqref{4.relH}.
\end{proof}


\subsection{Proof of the weak-strong uniqueness property}\label{sec.wsu.rig}

We proceed with the proof of Theorem \ref{thm.wsu}. First, we estimate the
relative entropy inequality \eqref{4.relH} and then the relative energy
inequality \eqref{4.relE}. A combination of both estimates shows \eqref{1.comb},
which proves the weak-strong uniqueness property.

{\em Step 1: Estimating the relative entropy.} As in the proof of Lemma \ref{lem.fei},
we decompose the matrix $B(\bm{c})$ by setting $M(\bm{c}):=B(\bm{c})-\lambda G(\bm{c})$
such that $B(\bm{c}) = M(\bm{c}) + \lambda G(\bm{c})$, where 
$G(\bm{c})=R(\bm{c})P_L(\bm{c})R(\bm{c})$ has the entries
$G_{ij}(\bm{c}) = c_i\delta_{ij}-c_ic_j$ and $0<\lambda<\evmin$.
In terms of these matrices, we can formulate \eqref{4.relH} as
\begin{align}\label{4.HGM}
  \H(&\bm{c}(T)|\bar{\bm{c}}(T)) - \H(\bm{c}^0|\bar{\bm{c}}^0)
	\le -\sum_{i,j=1}^n\int_0^T\int_\Omega M_{ij}(\bm{c})\na\log\frac{c_i}{\bar{c}_i}\cdot
	\na(\mu_j-\bar{\mu}_j)dxdt \\
	&{}- \lambda\sum_{i,j=1}^n\int_0^T\int_\Omega G_{ij}(\bm{c})
	\na\log\frac{c_i}{\bar{c}_i}\cdot\na(\mu_j-\bar{\mu}_j)dxdt \nonumber \\
	&{}- \sum_{i,j=1}^n\int_0^T\int_\Omega\bigg(B_{ij}(\bm{c})
	- \frac{c_i}{\bar{c}_i}B_{ij}(\bar{\bm{c}})\bigg)
	\na\log\frac{c_i}{\bar{c}_i}\cdot\na\bar{\mu}_j dxdt
	=: I_{10} + I_{11} + I_{12}. \nonumber
\end{align}

{\em Step 1a: Estimate of $I_{10}$.}
We know from \eqref{psd.zMz} and \eqref{3.zMz} that $M(\bm{c})$ 
is positive semidefinite and satisfies
$\bm{z}^T M(\bm{c})\bm{z}\le (\evmax-\lambda)|P_L(\bm{c})R(\bm{c})\bm{z}|^2$ for all
$\bm{z}\in\R^n$. Therefore, using Young's inequality with $\theta>0$,
\begin{align}\label{4.I10}
  I_{10} &\le \frac{\theta}{4}\sum_{i,j=1}^n\int_0^T\int_\Omega M_{ij}(\bm{c})
	\na\log\frac{c_i}{\bar{c}_i}\cdot\na\log\frac{c_j}{\bar{c}_j} dxdt \\
	&\phantom{xx}{}+ \frac{1}{\theta}\sum_{i,j=1}^n\int_0^T\int_\Omega M_{ij}(\bm{c})
	\na(\mu_i-\bar{\mu}_i)\cdot \na(\mu_j-\bar{\mu}_j)dxdt \nonumber \\
	&\le \frac{\theta}{4}(\evmax-\lambda)\sum_{i=1}^n\int_0^T\int_\Omega
	\bigg|\sum_{j=1}^n P_L(\bm{c})_{ij}\sqrt{c_j}\na\log\frac{c_i}{\bar{c}_i}\bigg|^2
	dxdt \nonumber \\
	&\phantom{xx}{}+ \frac{1}{\theta}(\evmax-\lambda)\sum_{i=1}^n\int_0^T\int_\Omega
	\bigg|\sum_{j=1}^n P_L(\bm{c})_{ij}\sqrt{c_j}\na(\mu_j-\bar{\mu}_j)\bigg|^2 dxdt.
	\nonumber
\end{align}

{\em Step 1b: Estimate of $I_{11}$.}
In the term $I_{11}$, we replace $\mu_j-\bar{\mu}_j$ by $\log(c_j/\bar{c}_j)
-\Delta(c_j-\bar{c}_j)$ and compute both terms in the difference separately.
The definition $G_{ij}(\bm{c})=\sqrt{c_i}P_L(\bm{c})_{ij}\sqrt{c_j}$ and
the property $P_L(\bm{c})^2=P_L(\bm{c})$ lead to
\begin{align}\label{4.G1}
  \sum_{i,j=1}^n&\int_0^T\int_\Omega G_{ij}(\bm{c})\na\log\frac{c_i}{\bar{c}_i}
	\cdot\na\log\frac{c_j}{\bar{c}_j}dxdt \\
	&= \sum_{i,j=1}^n\int_0^T\int_\Omega\sqrt{c_i} P_L(\bm{c})_{ij}\sqrt{c_j}
	\na\log\frac{c_i}{\bar{c}_i}\cdot\na\log\frac{c_j}{\bar{c}_j}dxdt \nonumber \\
	&= \sum_{i=1}^n\int_0^T\int_\Omega\bigg|\sum_{j=1}^n P_L(\bm{c})_{ij}
	\sqrt{c_j}\na\log\frac{c_j}{\bar{c}_j}\bigg|^2 dxdt. \nonumber
\end{align}
Furthermore, we use $G_{ij}(\bm{c}) = c_i\delta_{ij}-c_ic_j$ and integration by
parts to find that
\begin{align*}
	\sum_{i,j=1}^n&\int_0^T\int_\Omega
	G_{ij}(\bm{c})\na\log\frac{c_i}{\bar{c}_i}\cdot\na\Delta(c_j-\bar{c}_j)dx dt \\
	&= -\sum_{i,j=1}^n\int_0^T\int_\Omega\diver\bigg(( c_i\delta_{ij}-c_ic_j)
	\na\log\frac{c_i}{\bar{c}_i}\bigg)\Delta(c_j-\bar{c}_j)dxdt \\
	&= -\sum_{i=1}^n\int_0^T\int_\Omega\diver(\na c_i-c_i\na\log\bar{c}_i)
	\Delta(c_i-\bar{c}_i)dxdt \\
	&\phantom{xx}{}+ \sum_{i,j=1}^n\int_0^T\int_\Omega\diver(c_j\na c_i-c_ic_j
	\na\log\bar{c}_i)\Delta(c_j-\bar{c}_j)dxdt \\
	&= -\sum_{i,j=1}^n\int_0^T\int_\Omega\diver(\na c_i-c_i\na\log\bar{c}_i)
	\Delta(c_i-\bar{c}_i)dxdt \\
	&\phantom{xx}{}- \sum_{i,j=1}^n\int_0^T\int_\Omega\diver(c_ic_j
	\na\log\bar{c}_i)\Delta(c_j-\bar{c}_j)dxdt,
\end{align*}
where we used $\sum_{i=1}^n c_j\na c_i=0$ in the last step.
We mention that $\sum_{j=1}^n G_{ij}(\bm{c})\na\Delta c_j\in L^2(Q_T)$
because of \eqref{1.regc}, so the first integral in the previous computation
is well defined. It follows from $\Delta c_i\Delta(c_i-\bar{c}_i)
= (\Delta(c_i-\bar{c}_i))^2 + \Delta\bar{c}_i\Delta(c_i-\bar{c}_i)$ that
\begin{align}\label{4.G2}
  \sum_{i,j=1}^n&\int_0^T\int_\Omega G_{ij}(\bm{c})\na\log\frac{c_i}{\bar{c}_i}
	\cdot\na\Delta(c_i-\bar{c}_i) dxdt
	= -\sum_{i=1}^n\int_0^T\int_\Omega(\Delta(c_i-\bar{c}_i))^2 dxdt \\
	&\phantom{xx}{}
	- \sum_{i=1}^n\int_0^T\int_\Omega\diver(\na\bar{c}_i-c_i\na\log\bar{c}_i)
	\Delta(c_i-\bar{c}_i)dxdt \nonumber \\
	&\phantom{xx}{}- \sum_{i,j=1}^n\int_0^T\int_\Omega\diver(c_ic_j\na\log\bar{c}_i)
	\Delta(c_j-\bar{c}_j)dxdt. \nonumber
\end{align}
We multiply \eqref{4.G1} by $-\lambda$ and \eqref{4.G2} by $\lambda$ and
sum both expressions to find that
\begin{align}\label{4.I11}
  I_{11} &= -\lambda\sum_{i=1}^n\int_0^T\int_\Omega\bigg|\sum_{j=1}^n P_L(\bm{c})_{ij}
	\sqrt{c_j}\na\log\frac{c_j}{\bar{c}_j}\bigg|^2 dxdt
	- \lambda\sum_{i=1}^n\int_0^T\int_\Omega(\Delta(c_i-\bar{c}_i))^2 dxdt \\
	&\phantom{xx}{}- \lambda\sum_{i=1}^n\int_0^T\int_\Omega
	\diver(\na\bar{c}_i-c_i\na\log\bar{c}_i)\Delta(c_i-\bar{c}_i)dxdt \nonumber \\
	&\phantom{xx}{}- \lambda\sum_{i,j=1}^n\int_0^T\int_\Omega
	\diver(c_ic_j\na\log\bar{c}_i)\Delta(c_j-\bar{c}_j)dxdt. \nonumber
\end{align}
We apply Young's inequality to the last two terms. The third term in \eqref{4.I11}
becomes
\begin{align*}
  -&\lambda\sum_{i=1}^n\int_0^T\int_\Omega
	\diver(\na\bar{c}_i-c_i\na\log\bar{c}_i)\Delta(c_i-\bar{c}_i)dxdt \\
	&\le \frac{\lambda}{4}\sum_{i=1}^n\int_0^T\int_\Omega(\Delta(c_i-\bar{c}_i))^2 dxdt
	+ \lambda\sum_{i=1}^n\int_0^T\int_\Omega|\diver((c_i-\bar{c}_i)\na\log\bar{c}_i)|^2
	dxdt \\
	&\le \frac{\lambda}{4}\sum_{i=1}^n\int_0^T\int_\Omega(\Delta(c_i-\bar{c}_i))^2 
	dxdt \\
	&\phantom{xx}{}+ \lambda\sum_{i=1}^n\|\na\log\bar{c}_i\|_{L^\infty(Q_T)}
	\int_0^T\int_\Omega|\na(c_i-\bar{c}_i)|^2 dxdt \\
	&\phantom{xx}{}+ \lambda\sum_{i=1}^n
	\|\Delta\log\bar{c}_i\|_{L^\infty(Q_T)}
	\int_0^T\int_\Omega(c_i-\bar{c}_i)^2 dxdt \\
	&\le \frac{\lambda}{4}\sum_{i=1}^n\int_0^T\int_\Omega(\Delta(c_i-\bar{c}_i))^2 dxdt
	\\
	&\phantom{xx}{}+ \lambda C\sum_{i=1}^n\int_0^T\int_\Omega
	\big((c_i-\bar{c}_i)^2+|\na(c_i-\bar{c}_i)|^2\big)dxdt,
\end{align*}
where the constant $C>0$ depends on the $L^\infty$ norms of $\na\log\bar{\bm{c}}$
and $\Delta\log\bar{\bm{c}}$. Next, for the fourth term in \eqref{4.I11},
\begin{align*}
  -\lambda&\sum_{i,j=1}^n\int_0^T\int_\Omega
	\diver(c_ic_j\na\log\bar{c}_i)\Delta(c_j-\bar{c}_j)dxdt \\
	&\le \frac{\lambda}{4}\sum_{i=1}^n\int_0^T\int_\Omega(\Delta(c_i-\bar{c}_i))^2 dxdt
	+ \lambda\sum_{j=1}^n\int_0^T\int_\Omega\bigg|\sum_{i=1}^n\diver(c_ic_j
	\na\log\bar{c}_i)\bigg|^2 dxdt.
\end{align*}
We estimate the integrand of the last term, taking into account that
$\na\sum_{i=1}^n\bar{c}_i\na\log\bar{c}_i=\sum_{i=1}^n\na\bar{c}_i=0$:
\begin{align*}
  \sum_{i=1}^n&\diver(c_ic_j\na\log\bar{c}_i) = \sum_{i=1}^n\diver
	\big((c_i-\bar{c}_i)c_j\na\log\bar{c}_i\big) \\
	&= \sum_{i=1}^n c_j\diver((c_i-\bar{c}_i)\na\log\bar{c}_i)
	+ \sum_{i=1}^n(c_i-\bar{c}_i)\na\log\bar{c}_i\cdot\na c_j \\
	&= \sum_{i=1}^n c_j\diver((c_i-\bar{c}_i)\na\log\bar{c}_i)
	+ \sum_{i=1}^n c_i\na\log\bar{c}_i\cdot\na (c_j-\bar{c}_j) 
	+ \sum_{i=1}^n(c_i-\bar{c}_i)\na\log\bar{c}_i\cdot\na\bar{c}_j
	\\
	&\le C\sum_{i=1}^n\big(|c_i-\bar{c}_i| + |\na(c_i-\bar{c}_i)|\big),
\end{align*}
where $C>0$ depends on the $L^\infty$ norms of $\na\log\bar{\bm{c}}$ and
$\Delta\log\bar{\bm{c}}$. This yields
\begin{align*}
  -\lambda&\sum_{i,j=1}^n\int_0^T\int_\Omega
	\diver(c_ic_j\na\log\bar{c}_i)\Delta(c_j-\bar{c}_j)dxdt \\
	&\le \frac{\lambda}{4}\sum_{i=1}^n\int_0^T\int_\Omega(\Delta(c_i-\bar{c}_i))^2 dxdt
	+ \lambda C\sum_{i=1}^n\int_0^T\int_\Omega
	\big((c_i-\bar{c}_i)^2 + |\na(c_i-\bar{c}_i)|^2\big)dxdt.
\end{align*}
Using these estimates in \eqref{4.I11}, we arrive at
\begin{align}\label{4.I11final}
  I_{11} &\le -\lambda\sum_{i=1}^n\int_0^T\int_\Omega\bigg|\sum_{j=1}^n 
	P_L(\bm{c})_{ij}\sqrt{c_j}\na\log\frac{c_j}{\bar{c}_j}\bigg|^2 dxdt
	- \frac{\lambda}{2}\sum_{i=1}^n\int_0^T\int_\Omega(\Delta(c_i-\bar{c}_i))^2 dxdt \\
	&\phantom{xx}{}+ \lambda C\sum_{i=1}^n\int_0^T\int_\Omega
	\big((c_i-\bar{c}_i)^2 + |\na(c_i-\bar{c}_i)|^2\big)dxdt. \nonumber
\end{align}

{\em Step 1c: Estimate of $I_{12}$.}
By definition of $B_{ij}(\bm{c})$ and Young's inequality with $\theta'>0$, 
\begin{align*}
  I_{12} &= -\sum_{i,j=1}^n\int_0^T\int_\Omega\sqrt{c_i}\bigg(
	D_{ij}^{BD}(\bm{c})\sqrt{c_j} - \sqrt{\frac{c_i}{\bar{c}_i}}D_{ij}^{BD}(\bar{\bm{c}})
	\sqrt{\bar{c}_j}\bigg)\na\log\frac{c_i}{\bar{c}_i}\cdot\na\bar{\mu}_j dxdt \\
	&\le \frac{\theta'}{4}\sum_{i=1}^n\int_0^T\int_\Omega c_i\bigg|\na\log
	\frac{c_i}{\bar{c}_i}\bigg|^2 dxdt \\
	&\phantom{xx}{}+ \frac{n}{\theta'}\sum_{i,j=1}^n\int_0^T\int_\Omega\bigg(
	D_{ij}^{BD}(\bm{c})\sqrt{c_j} -  \sqrt{\frac{c_i}{\bar{c}_i}}
	D_{ij}^{BD}(\bar{\bm{c}})\sqrt{\bar{c}_j}\bigg)^2|\na\bar{\mu}_j|^2 dxdt.
\end{align*}
The bracket of the second term can be estimated according to
\begin{align}
  \bigg|D_{ij}^{BD}&(\bm{c})\sqrt{c_j} -  \sqrt{\frac{c_i}{\bar{c}_i}}
	D_{ij}^{BD}(\bar{\bm{c}})\sqrt{\bar{c}_j}\bigg| \label{4.estD} \\
	&= \bigg|D_{ij}^{BD}(\bm{c})\sqrt{c_j} - D_{ij}^{BD}(\bar{\bm{c}})\sqrt{\bar{c}_j}
	- \frac{\sqrt{c_i}-\sqrt{\bar{c}_i}}{\sqrt{\bar{c}_i}}D_{ij}^{BD}(\bar{\bm{c}})
	\sqrt{\bar{c}_j}\bigg| \nonumber \\
	&\le \frac{C}{\sqrt{m}}\sum_{i=1}^n\big(|c_i-\bar{c}_i| 
	+ |\sqrt{c_i}-\sqrt{\bar{c}_i}|\big)
	\le C(m)\sum_{i=1}^n|c_i-\bar{c}_i|, \nonumber
\end{align}
using the assumption $\bar{c}_i\ge m>0$ and the boundedness of $D_{ij}^{BD}$
(see Lemma \ref{lem.DB} (i)). It follows that
\begin{equation}
  I_{12} \le \frac{\theta'}{4}\sum_{i=1}^n\int_0^T\int_\Omega c_i\bigg|\na\log
	\frac{c_i}{\bar{c}_i}\bigg|^2 dxdt + C(m,\theta')\sum_{i=1}^n
	\int_0^T\int_\Omega(c_i-\bar{c}_i)^2 dxdt. \label{4.I12}
\end{equation}

{\em Step 1d: Combining the estimates.}
We deduce from \eqref{4.HGM}, after inserting estimates \eqref{4.I10}, 
\eqref{4.I11final}, and \eqref{4.I12} for $I_{10}$, $I_{11}$, and $I_{12}$,
respectively, that
\begin{align}
  \H(&\bm{c}(T)|\bar{\bm{c}}(T)) \le \H(\bm{c}^0|\bar{\bm{c}}^0) \nonumber \\
	&{}+ \bigg(\frac{\theta}{4}(\evmax-\lambda) - \lambda\bigg)\sum_{i=1}^n\int_0^T
	\int_\Omega\bigg|\sum_{j=1}^n P_L(\bm{c})_{ij}\sqrt{c_j}\na\log\frac{c_j}{\bar{c}_j}
	\bigg|^2 dxdt \label{4.I012} \\
	&{}+ \frac{\evmax-\lambda}{\theta}\sum_{i=1}^n\int_0^T\int_\Omega\bigg|
	\sum_{j=1}^n P_L(\bm{c})_{ij}\sqrt{c_j}\na(\mu_j-\bar{\mu}_j)\bigg|^2 dxdt 
	\nonumber \\
	&{}- \frac{\lambda}{2}\sum_{i=1}^n\int_0^T\int_\Omega(\Delta(c_i-\bar{c}_i))^2dxdt
	+ \lambda C\sum_{i=1}^n\int_0^T\int_\Omega\big((c_i-\bar{c}_i)^2
	+ |\na(c_i-\bar{c}_i)|^2\big)dxdt \nonumber \\
	&{}+ \frac{\theta'}{4}\sum_{i=1}^n\int_0^T\int_\Omega c_i\bigg|\na\log
	\frac{c_i}{\bar{c}_i}\bigg|^2 dxdt + C(m,\theta')\sum_{i=1}^n
	\int_0^T\int_\Omega(c_i-\bar{c}_i)^2 dxdt. \nonumber
\end{align}
The last but one term on the right-hand side still needs to be estimated.
To this end, we decompose $I=P_L(\bm{c})+P_{L^\perp}(\bm{c})$:
$$
  \sum_{i=1}^n c_i\bigg|\na\log\frac{c_i}{\bar{c}_i}\bigg|^2
	= \sum_{i=1}^n\bigg|\sum_{j=1}^n P_L(\bm{c})_{ij}\sqrt{c_j}
	\na\log\frac{c_j}{\bar{c}_j}\bigg|^2
	+ \sum_{i=1}^n\bigg|\sum_{j=1}^n P_{L^\perp}(\bm{c})_{ij}\sqrt{c_j}
	\na\log\frac{c_j}{\bar{c}_j}\bigg|^2.
$$
The first term on the right-hand side can be absorbed for sufficiently
small $\theta'>0$ by the second term of the left-hand side of \eqref{4.I012}.
For the other term, we use the definition $P_{L^\perp}(\bm{c})_{ij}=\sqrt{c_ic_j}$
and $\sum_{j=1}^n\na c_j=\sum_{j=1}^n\na\bar{c}_j=0$:
$$
  \sum_{j=1}^n P_{L^\perp}(\bm{c})_{ij}\sqrt{c_j}
	\na\log\frac{c_j}{\bar{c}_j} = \sqrt{c_i}\sum_{j=1}^n c_j
	\na\log\frac{c_j}{\bar{c}_j}
	= \sqrt{c_i}\sum_{j=1}^n (c_j-\bar{c}_j)\na\log\bar{c}_j.
$$
This gives
\begin{align}\label{4.clogcc}
  \sum_{i=1}^n\int_0^T\int_\Omega c_i\bigg|\na\log
	\frac{c_i}{\bar{c}_i}\bigg|^2 dxdt
	&\le \sum_{i=1}^n\int_0^T\int_\Omega\bigg|\sum_{j=1}^n P_L(\bm{c})_{ij}\sqrt{c_j}
	\na\log\frac{c_j}{\bar{c}_j}\bigg|^2 dxdt \\
	&\phantom{xx}{}+ \sum_{j=1}^n\|\na\log\bar{c}_j\|_{L^\infty(Q_T)}
	\int_0^T\int_\Omega(c_i-\bar{c}_i)^2 dxdt. \nonumber 
\end{align} 
Hence, choosing $\theta=\lambda/(\evmax-\lambda)$ and $\theta'=\lambda$, 
we conclude from \eqref{4.I012} that
\begin{align}\label{4.Hfinal}
  \H(&\bm{c}(T)|\bar{\bm{c}}(T)) + \frac{\lambda}{2}\sum_{i=1}^n\int_0^T
	\int_\Omega\bigg|\sum_{j=1}^n P_L(\bm{c})_{ij}\sqrt{c_j}\na\log\frac{c_j}{\bar{c}_j}
	\bigg|^2 dxdt \\
	&\phantom{xx}{}+ \frac{\lambda}{2}\sum_{i=1}^n\int_0^T\int_\Omega
	(\Delta(c_i-\bar{c}_i))^2dxdt \nonumber \\
	& \le \H(\bm{c}^0|\bar{\bm{c}}^0) + \frac{(\evmax-\lambda)^2}{\lambda}
	\sum_{i=1}^n\int_0^T\int_\Omega\bigg|
	\sum_{j=1}^n P_L(\bm{c})_{ij}\sqrt{c_j}\na(\mu_j-\bar{\mu}_j)\bigg|^2 dxdt 
	\nonumber \\
	&\phantom{xx}{}+ C\sum_{i=1}^n\int_0^T\int_\Omega\big((c_i-\bar{c}_i)^2
	+ |\na(c_i-\bar{c}_i)|^2\big)dxdt. \nonumber
\end{align}
We show in the next step that the second term on the right-hand side can be
estimated by the relative energy inequality.

{\em Step 2: Estimating the relative energy.} We start from the relative energy
inequality \eqref{4.relE}. Observing that due to Lemma \ref{lem.DB} (ii), 
\begin{align*}
  \sum_{i,j=1}^n B_{ij}(\bm{c})\na(\mu_i-\bar{\mu}_i)\cdot\na(\mu_j-\bar{\mu}_j)
	&= \sum_{i,j=1}^n D_{ij}^{BD}(\bm{c})\big(\sqrt{c_i}\na(\mu_i-\bar{\mu}_i)\big)
	\cdot\big(\sqrt{c_j}\na(\mu_j-\bar{\mu}_j)\big) \\
	&\ge \evmin\sum_{i=1}^n\bigg|\sum_{j=1}^n P_L(\bm{c})_{ij}\sqrt{c_j}
	\na(\mu_j-\bar{\mu}_j)\bigg|^2,
\end{align*}
inequality \eqref{4.relE} becomes
\begin{align}\label{4.aux2}
  & \E(\bm{c}(T)|\bar{\bm{c}}(T))
	+ \evmin\sum_{i=1}^n\int_0^T\int_\Omega\bigg|\sum_{j=1}^n P_L(\bm{c})_{ij}\sqrt{c_j}
	\na(\mu_j-\bar{\mu}_j)\bigg|^2dxdt \\
	&\phantom{xx}{}\le \E(\bm{c}^0|\bar{\bm{c}}^0) + I_{13} + I_{14} + I_{15} + I_{16},
	\qquad\mbox{where} \nonumber \\
	& I_{13} = -\sum_{i,j=1}^n\int_0^T\int_\Omega\bigg(B_{ij}(\bm{c})
	- \frac{c_i}{\bar{c}_i}B_{ij}(\bar{\bm{c}})\bigg)\na(\mu_i-\bar{\mu}_i)\cdot
	\na\bar{\mu}_j dxdt, \nonumber \\
	& I_{14} = \sum_{i,j=1}^n\int_0^T\int_\Omega(c_i-\bar{c}_i)\na(c_i-\bar{c}_i)
	\cdot\na\diver\bigg(\frac{1}{\bar{c}_i}B_{ij}(\bar{\bm{c}})\na\bar{\mu}_j\bigg)dxdt, 
	\nonumber \\
	& I_{15} = \frac12\sum_{i,j=1}^n\int_0^T\int_\Omega|\na(c_i-\bar{c}_i)|^2
	\diver\bigg(\frac{1}{\bar{c}_i}B_{ij}(\bar{\bm{c}})\na\bar{\mu}_j\bigg)dxdt, 
	\nonumber \\
	& I_{16} = \sum_{i,j=1}^n\int_0^T\int_\Omega
	\na(c_i-\bar{c}_i)\otimes\na(c_i-\bar{c}_i)
	:\na\bigg(\frac{1}{\bar{c}_i}B_{ij}(\bar{\bm{c}})\na\bar{\mu}_j\bigg)dxdt.
	\nonumber 
\end{align}
The terms $I_{14}$, $I_{15}$, and $I_{16}$ can be estimated directly by
using the regularity assumption 
$\na\diver((1/\bar{c}_i)B_{ij}(\bar{\bm{c}})\na\bar{\mu}_j)\in L^\infty(Q_T)$: 
\begin{equation}\label{4.I1456}
  I_{14}+I_{15}+I_{16} \le C\sum_{i=1}^n\int_0^T\int_\Omega\big(
  (c_i-\bar{c}_i)^2 + |\na(c_i-\bar{c}_i)|^2\big)dxdt.
\end{equation}

The estimate for $I_{13}$ is more involved. First, we use the definition of
$B(\bm{c})$ and decompose $I=P_L(\bm{c})+P_{L^\perp}(\bm{c})$. Then 
\begin{align*}
  & I_{13} = \sum_{i,j=1}^n\int_0^T\int_\Omega\sqrt{c_i} E_{ij}(\bm{c},\bar{\bm{c}})
	\na(\mu_i-\bar{\mu}_i)\cdot\na\bar{\mu}_j dxdt 
	=: I_{131} + I_{132}, \quad\mbox{where} \\
	& E_{ij}(\bm{c},\bar{\bm{c}}) = D_{ij}^{BD}(\bm{c})\sqrt{c_j}
	- \sqrt{\frac{c_i}{\bar{c}_i}}D_{ij}^{BD}(\bar{\bm{c}})\sqrt{\bar{c}_j}, \\
	& I_{131} = \sum_{i,j,k,\ell=1}^n\int_0^T\int_\Omega
	P_L(\bm{c})_{i\ell}E_{\ell j}(\bm{c},\bar{\bm{c}})
	P_L(\bm{c})_{ik}\sqrt{c_k}\na(\mu_k-\bar{\mu}_k)\cdot\na\bar{\mu}_j dxdt, \\
	& I_{132} = \sum_{i,j,k,\ell=1}^n\int_0^T\int_\Omega
	P_{L^\perp}(\bm{c})_{i\ell}E_{\ell j}(\bm{c},\bar{\bm{c}})
	P_{L^\perp}(\bm{c})_{ik}\sqrt{c_k}\na(\mu_k-\bar{\mu}_k)\cdot
	\na\bar{\mu}_j dxdt.
\end{align*}

For $I_{131}$, it is sufficient to apply Young's inequality and to use 
estimate \eqref{4.estD} for $E_{ij}(\bm{c},\bar{\bm{c}})$:
\begin{align}\label{4.I131}
  I_{131} &\le \frac{\evmin}{2}\sum_{i=1}^n\int_0^T\int_\Omega\bigg|\sum_{j=1}^n
	P_L(\bm{c})_{ij}\sqrt{c_j}\na(\mu_j-\bar{\mu}_j)\bigg|^2 dxdt \\
	&\phantom{xx}{}+ \frac{n}{2\evmin}\sum_{i,j=1}^n\int_0^T\int_\Omega
	|E_{ij}(\bm{c},\bar{\bm{c}})|^2|\na\bar{\mu}_j|^2 dxdt \nonumber \\
	&\le \frac{\evmin}{2}\sum_{i=1}^n\int_0^T\int_\Omega\bigg|\sum_{j=1}^n
	P_L(\bm{c})_{ij}\sqrt{c_j}\na(\mu_j-\bar{\mu}_j)\bigg|^2 dxdt \nonumber \\
	&\phantom{xx}{}+ C(m)\sum_{i=1}^n\int_0^T\int_\Omega(c_i-\bar{c}_i)^2 dxdt,
	\nonumber 
\end{align}
where $C(m)>0$ depends on $m$, $n$, $\evmin$, and the $L^\infty(Q_T)$ norm
of $\na\bar{\bm{\mu}}$. 

For $I_{132}$, we observe that the property $\ran D^{BD}(\bm{c})=L(\bm{c})$,
which follows from Lemma \ref{lem.Dz}, implies that
$P_{L^\perp}(\bm{c})D^{BD}(\bm{c})\bm{z}=\bm{0}$ for all $\bm{z}\in\R^n$.
Hence, 
$$
  \sum_{\ell=1}^n P_{L^\perp}(\bm{c})_{i\ell}E_{\ell j}(\bm{c},\bar{\bm{c}}) 
	= -\sum_{\ell=1}^n P_{L^\perp}(\bm{c})_{i\ell}\sqrt{\frac{c_\ell}{\bar{c}_\ell}}
	D_{\ell j}^{BD}(\bar{\bm{c}})\sqrt{\bar{c}_j}.
$$
We infer from the definitions
$P_{L^\perp}(\bm{c})_{ik}=\sqrt{c_ic_k}$ and
$\mu_k-\bar{\mu}_k=\log(c_k/\bar{c}_k)-\Delta(c_k-\bar{c}_k)$ that
\begin{align}\label{4.I132}
  I_{132} &= -\sum_{i,j,k,\ell=1}^n\int_0^T\int_\Omega P_{L^\perp}(\bm{c})_{ik}
	\sqrt{c_k}P_{L^\perp}(\bm{c})_{i\ell}\sqrt{\frac{c_\ell}{\bar{c}_\ell}}
	D_{\ell j}^{BD}(\bar{\bm{c}})\sqrt{\bar{c}_j}\na(\mu_k-\bar{\mu}_k)
	\cdot\na\bar{\mu}_j dxdt \\
	&= -\sum_{j,k,\ell=1}^n\int_0^T\int_\Omega\sum_{i=1}^n c_ic_k
	\frac{c_\ell}{\sqrt{\bar{c}_\ell}}D_{\ell j}^{BD}(\bar{\bm{c}})\sqrt{\bar{c}_j}
	\na(\mu_k-\bar{\mu}_k)\cdot\na\bar{\mu}_j dxdt \nonumber \\
	&= -\sum_{j,k,\ell=1}^n\int_0^T\int_\Omega c_k
	\frac{c_\ell-\bar{c}_\ell}{\sqrt{\bar{c}_\ell}}D_{\ell j}^{BD}(\bar{\bm{c}})
	\sqrt{\bar{c}_j}\na\log\frac{c_k}{\bar{c}_k}\cdot\na\bar{\mu}_j dxdt \nonumber \\
	&\phantom{xx}{}- \sum_{j,k,\ell=1}^n\int_0^T\int_\Omega \diver\bigg(c_k
	\frac{c_\ell-\bar{c}_\ell}{\sqrt{\bar{c}_\ell}}D_{\ell j}^{BD}(\bar{\bm{c}})
	\sqrt{\bar{c}_j}\na\bar{\mu}_j\bigg)\Delta(c_k-\bar{c}_k) dxdt \nonumber \\
	&=: J_1 + J_2, \nonumber 
\end{align}
where we added the expression $-\sum_{\ell=1}^n\sqrt{\bar{c}_\ell}
D_{\ell j}^{BD}(\bar{\bm{c}}) = 0$, which follows from
$\ker D^{BD}(\bar{\bm{c}})=L^\perp(\bar{\bm{c}})=\operatorname{span}
\{\sqrt{\bar{\bm{c}}}\}$ (see Lemma \ref{lem.DB}) and the symmetry of
$D^{BD}(\bar{\bm{c}})$ (see Lemma \ref{lem.Dz}), and we integrated by
parts in the last integral.

To estimate $J_1$, we use Young's inequality with $\theta>0$, 
Lemma \ref{lem.DB} (iii), and \eqref{4.clogcc}:
\begin{align*}
  J_1 &\le \frac{\theta}{4}\sum_{k=1}^n\int_0^T\int_\Omega c_k\bigg|
	\na\log\frac{c_k}{\bar{c}_k}\bigg|^2 dxdt \\
	&\phantom{xx}{}+ \frac{n}{\theta}\sum_{j,k,\ell=1}^n\int_0^T\int_\Omega
	(c_\ell-\bar{c}_\ell)^2\frac{c_k}{\bar{c}_\ell}D_{\ell j}^{BD}(\bar{\bm{c}})^2
	\bar{c}_j|\na\bar{\mu}_j|^2 dxdt \\
	&\le \frac{\theta}{4}\sum_{i=1}^n\int_0^T\int_\Omega\bigg|\sum_{j=1}^n
	P_L(\bm{c})_{ij}\sqrt{c_j}\na\log\frac{c_j}{\bar{c}_j}\bigg|^2 dxdt
	+ C\theta\sum_{i=1}^n\int_0^T\int_\Omega(c_i-\bar{c}_i)^2 dxdt \\
	&\phantom{xx}{}+ \frac{C}{\theta}\sum_{\ell=1}^n\int_0^T\int_\Omega
	(c_\ell-\bar{c}_\ell)^2 dxdt,
\end{align*}
where $C>0$ depends on the $L^\infty(Q_T)$ norms of $\na\bar{\bm{c}}$ and
$\na\bar{\bm{\mu}}$. 

Next, we use again Young's inequality with
$\theta'>0$:
\begin{equation}\label{4.J2}
  J_2 \le \frac{\theta'}{4}\sum_{k=1}^n\int_\Omega(\Delta(c_k-\bar{c}_k))^2dxdt
	+ \frac{n}{\theta'}\sum_{k,\ell=1}^n\int_0^T\int_\Omega
	\big|\diver\big(c_k(c_\ell-\bar{c}_\ell)Q_\ell(\bar{\bm{c}})\big)\big|^2 dxdt,
	\nonumber
\end{equation}
where we defined
$$
  Q_\ell(\bar{\bm{c}}) := \sum_{j=1}^n\frac{1}{\sqrt{\bar{c}_\ell}}
	D_{\ell j}^{BD}(\bar{\bm{c}})\sqrt{\bar{c}_j}\na\bar{\mu}_j.
$$
Estimating
\begin{align*}
  \big|\diver\big(c_k(c_\ell-\bar{c}_\ell)Q_\ell(\bar{\bm{c}})\big)\big|
	&= \big|c_k(c_\ell-\bar{c}_\ell)\diver Q_\ell(\bar{\bm{c}})
	+ c_k\na(c_\ell-\bar{c}_\ell)\cdot Q_\ell(\bar{\bm{c}}) \\
	&\phantom{xx}{}+ (c_\ell-\bar{c}_\ell)\na(c_k-\bar{c}_k)\cdot Q_\ell(\bar{\bm{c}})
	+ (c_\ell-\bar{c}_\ell)\na\bar{c}_k\cdot Q_\ell(\bar{\bm{c}})\big| \\
	&\le C\big(|c_\ell-\bar{c}_\ell| + |\na(c_\ell-\bar{c}_\ell)| 
	+ |\na(c_k-\bar{c}_k)|\big),
\end{align*}
where $C>0$ depends on the $L^\infty(Q_T)$ norm of $Q_\ell(\bar{\bm{c}})$,
we deduce from \eqref{4.J2} that
$$
  J_2 \le \frac{\theta'}{4}\sum_{k=1}^n\int_\Omega(\Delta(c_k-\bar{c}_k))^2dxdt
	+ \frac{C}{\theta'}\sum_{i=1}^n\int_0^T\int_\Omega\big((c_i-\bar{c}_i)^2
	+ |\na(c_i-\bar{c}_i)|^2\big)dxdt.
$$
Inserting the estimates for $J_1$ and $J_2$ into \eqref{4.I132} leads to
\begin{align*}
  I_{132} &\le \frac{\theta}{4}\sum_{i=1}^n\int_0^T\int_\Omega\bigg|\sum_{j=1}^n
	P_L(\bm{c})_{ij}\sqrt{c_j}\na\log\frac{c_j}{\bar{c}_j}\bigg|^2 dxdt
	+ \frac{\theta'}{4}\sum_{i=1}^n\int_0^T\int_\Omega(\Delta(c_i-\bar{c}_i))^2dxdt \\
	&\phantom{cc}{}+ C(\theta,\theta')\sum_{i=1}^n\int_0^T\int_\Omega
	\big((c_i-\bar{c}_i)^2 + |\na(c_i-\bar{c}_i)|^2\big)dxdt.
\end{align*}
Then, together with \eqref{4.I131}, we find that
\begin{align}
  I_{13} &\le \frac{\evmin}{2}\sum_{i=1}^n\int_0^T\int_\Omega\bigg|\sum_{j=1}^n
	P_L(\bm{c})_{ij}\sqrt{c_j}\na(\mu_j-\bar{\mu}_j)\bigg|^2 dxdt \nonumber \\
	&\phantom{xx}{}+ \frac{\theta}{4}\sum_{i=1}^n\int_0^T\int_\Omega\bigg|\sum_{j=1}^n
	P_L(\bm{c})_{ij}\sqrt{c_j}\na\log\frac{c_j}{\bar{c}_j}\bigg|^2 dxdt
	+ \frac{\theta'}{4}\sum_{i=1}^n\int_0^T\int_\Omega(\Delta(c_i-\bar{c}_i))^2dxdt 
	\label{4.I13} \\
	&\phantom{cc}{}+ C(\theta,\theta')\sum_{i=1}^n\int_0^T\int_\Omega
	\big((c_i-\bar{c}_i)^2 + |\na(c_i-\bar{c}_i)|^2\big)dxdt. \nonumber
\end{align}
Finally, we insert this estimate and estimate \eqref{4.I1456} for 
$I_{14}$, $I_{15}$, and $I_{16}$ into \eqref{4.aux2}, observing that the 
first term on the right-hand side of \eqref{4.I13} 
is absorbed by the second term on the left-hand side of \eqref{4.aux2}:
\begin{align}\label{4.Efinal}
  \E(&\bm{c}(T)|\bar{\bm{c}}(T)) + \frac{\evmin}{2}\sum_{i=1}^n
	\int_0^T\int_\Omega\bigg|\sum_{j=1}^n P_L(\bm{c})_{ij}\sqrt{c_j}
	\na(\mu_j-\bar{\mu}_j)\bigg|^2dxdt \\
  &\le \E(\bm{c}^0 |\bar{\bm{c}}^0) 
	+ \frac{\theta}{4}\sum_{i=1}^n\int_0^T\int_\Omega\bigg|\sum_{j=1}^n
	P_L(\bm{c})_{ij}\sqrt{c_j}\na\log\frac{c_j}{\bar{c}_j}\bigg|^2 dxdt \nonumber \\
	&\phantom{xx}{}
	+ \frac{\theta'}{4}\sum_{i=1}^n\int_0^T\int_\Omega(\Delta(c_i-\bar{c}_i))^2dxdt 
	\nonumber \\
	&\phantom{xx}{}+ C(\theta,\theta')\sum_{i=1}^n\int_0^T\int_\Omega
	\big((c_i-\bar{c}_i)^2 + |\na(c_i-\bar{c}_i)|^2\big)dxdt. \nonumber
\end{align}

{\em Step 3: Combining the relative energy and relative entropy inequalities.} 
Next, multiply \eqref{4.Efinal} by
$4(\evmax-\lambda)^2/(\evmin\lambda)$, choose
$\theta'=\lambda_m\lambda^2/(4(\lambda_M-\lambda)^2)$, and add this expression to 
\eqref{4.Hfinal} (which estimates $\H(\bm{c}|\bar{\bm{c}})$). 
Then some terms on the right-hand
side can be absorbed by the corresponding expressions on the left-hand side, 
leading to
\begin{align*}
  \H(&\bm{c}(T)|\bar{\bm{c}}(T)) + \frac{4(\evmax-\lambda)^2}{\evmin\lambda}
	\E(\bm{c}(T)|\bar{\bm{c}}(T)) \\
	&\phantom{xx}{}+ \frac{(\evmax-\lambda)^2}{\lambda}
	\sum_{i=1}^n\int_0^T\int_\Omega\bigg|\sum_{j=1}^n P_L(\bm{c})_{ij}\sqrt{c_j}
	\na(\mu_j-\bar{\mu}_j)\bigg|^2 dxdt \\
	&\phantom{xx}{}+ \frac{\lambda}{4}\sum_{i=1}^n\int_0^T\int_\Omega\bigg|\sum_{j=1}^n
	P_L(\bm{c})_{ij}\sqrt{c_j}\na\log\frac{c_j}{\bar{c}_j}\bigg|^2 dxdt
	+ \frac{\lambda}{4}\sum_{i=1}^n\int_0^T\int_\Omega(\Delta(c_i-\bar{c}_i))^2dxdt \\
	&\le \H(\bm{c}^0|\bar{\bm{c}}^0) + \frac{4(\evmax-\lambda)^2}{\evmin\lambda}
	\E(\bm{c}^0|\bar{\bm{c}}^0) \\
	&\phantom{xx}{}+ C(\theta,\theta')\sum_{i=1}^n\int_0^T\int_\Omega
	\big((c_i-\bar{c}_i)^2 + |\na(c_i-\bar{c}_i)|^2\big)dxdt.
\end{align*}
The last term can be bounded in terms of the free energy, since
$c_i\log(c_i/\bar{c}_i)-(c_i-\bar{c}_i)\ge (c_i-\bar{c}_i)^2/2$
\cite[Lemma 18]{HJT21}:
\begin{align*}
  \H(\bm{c}(T)|\bar{\bm{c}}(T)) + \frac{4(\evmax-\lambda)^2}{\evmin\lambda}
	\E(\bm{c}(T)|\bar{\bm{c}}(T)) 
	&\le \H(\bm{c}^0|\bar{\bm{c}}^0) + \frac{4(\evmax-\lambda)^2}{\evmin\lambda}
	\E(\bm{c}^0|\bar{\bm{c}}^0) \\
	&\phantom{xx}{}+ C\int_0^T\E(\bm{c}(t)|\bar{\bm{c}}(t))dt.
\end{align*}
Then the theorem follows after applying Gronwall's lemma.


\section{Examples}\label{sec.exam}

We present some models which satisfy Assumptions (B1)--(B4).

\subsection{A phase separation model}

Elliott and Garcke have studied in \cite{ElGa97} equations
\eqref{1.eq1}--\eqref{1.mu}, formulated in terms of the mobility matrix
\eqref{1.B}, where
$$
  B_{ij}(\bm{c}) = b_i(c_i)\bigg(\delta_{ij} - \frac{b_j(c_j)}{\sum_{k=1}^n b_k(c_k)}
	\bigg), \quad i,j=1,\ldots,n.
$$
The functions $b_i\in C^1([0,1])$ are nonnegative and satisfy
$\beta_1 c_i\le b_i(c_i)\le \beta_2 c_i$ for $c_i\in[0,1]$ and some constants
$0<\beta_1\le \beta_2$. 
This model describes phase transitions in multicomponent systems;
it has been suggested in \cite{AkTo90} to model the dynamics of polymer mixtures 
with $b_i(c_i)=\beta_i c_i$ and $\beta_i>0$.
The subspace $L(\bm{c})$ becomes
$$
  L(\bm{c}) = \bigg\{\bm{z}\in\R^n:\sum_{i=1}^n \sqrt{b_i(c_i)}z_i=0\bigg\},
$$
and the matrix $D^{BD}(\bm{c})$ is determined directly from the mobility matrix:
$$
  D_{ij}^{BD}(\bm{c}) = \frac{B_{ij}(\bm{c})}{\sqrt{b_i(c_i)b_j(c_j)}}
	= \delta_{ij} - \frac{\sqrt{b_i(c_i)b_j(c_j)}}{\sum_{k=1}^n b_k(c_k)}.
$$
Instead of checking Assumptions (B1)--(B4), it is more convenient to verify
the statements of Lemma \ref{lem.DB} directly. This has been done in
\cite[Section 2]{HJT21}. Although the global existence of weak solutions
has been already proved in \cite{ElGa97}, we obtain the weak-strong uniqueness
property as a new result.

\subsection{Classical Maxwell--Stefan system}

In the classical Maxwell--Stefan model, the matrix $K(\bm{c})$ has the entries
$K_{ij}(\bm{c}) = \delta_{ij}\sum_{\ell=1}^n k_{i\ell}c_\ell - k_{ij}c_i$ for
$i,j=1,\ldots,n$. The associated matrix $D^{MS}(\bm{c})$ is given by
$$
  D_{ij}^{MS}(\bm{c}) = \frac{1}{\sqrt{c_i}}K_{ij}(\bm{c})\sqrt{c_j}
	= \delta_{ij}\sum_{\ell=1}^n k_{i\ell} c_\ell - k_{ij}\sqrt{c_ic_j}, 
	\quad i,j=1,\ldots,n.
$$
It is proved in \cite[Sec.~5.4]{HJT21} that this matrix satisfies
Assumptions (B1)--(B4). Thus, Theorems \ref{thm.ex} and \ref{thm.wsu} hold
for the model
\begin{align*}
  & \pa_t c_i + \diver(c_iu_i) = 0, \quad \sum_{i=1}^n c_iu_i = 0, \quad
	i=1,\ldots,n, \\
	& c_i\na\mu_i - \frac{c_i}{\sum_{k=1}^n c_k}\sum_{j=1}^n c_j\na\mu_j 
	= -\sum_{j=1}^n k_{ij}c_ic_j(u_i-u_j),
\end{align*}
where $\mu_i=\log c_i-\Delta c_i$.
Compared to \cite{HJT21}, the mobility does not only depend on $c_i$ but also
on $\Delta c_i$. This extends the existence and weak-strong uniqueness results
to a more general case.

\subsection{A physical vapor decomposition model for solar cells}

Thin-film crystalline solar cells can be fabricated as thin coatings on a 
substrate by the physical vapor decomposition process. The dynamics of the
volume fractions of the process components can be described by model
\eqref{1.eq1}--\eqref{1.bic} with the chemical potentials $\mu_i=\log c_i$
and the mobility matrix
$$
  B_{ij}(\bm{c}) = \delta_{ij}\sum_{\ell=1}^n k_{i\ell}  \, c_i c_\ell - k_{ij} c_ic_j,
	\quad i,j=1,\ldots,n.
$$
In this case, the Bott--Duffin matrix is given by 
$D_{ij}^{BD}(\bm{c}) = B_{ij}(\bm{c})/\sqrt{c_ic_j} = D_{ij}^{MS}(\bm{c})$,
where $D^{MS}(\bm{c})$ is the Maxwell--Stefan matrix of the previous subsection.
Thus, Assumptions (B1)--(B4) are verified for this matrix. We infer that
Theorems \ref{thm.ex} and \ref{thm.wsu} hold for the model
$$
  \pa_t c_i = \diver\sum_{j=1}^n k_{ij}c_ic_j\na(\mu_i-\mu_j), \quad
	\mu_i = \log c_i-\Delta c_i, \quad i=1,\ldots,n.
$$
When $\mu_i=\log c_i$ for all $i$, the global existence of weak solutions was proved in
\cite{BaEh18} and the weak-strong uniqueness of solutions was shown in
\cite{HoBu21}. A global existence result was obtained in \cite{EMP21} for
$\mu_1 = \log c_1 - \delta c_1 + \beta(1-2c_1)$ with $\beta>0$
and $\mu_i=\log c_i$ for $i=2,\ldots,n$. Our theorems extend these results 
to a more general case.



\begin{thebibliography}{11}

\bibitem{AkTo90} A. Ziya Akcasu and M.~Tombakoglu. Dynamics of copolymer and
homopolymer mixtures in bulk and in solution via the randon phase approyimtation.
{\em Macromolecules} 23 (1990), 607--612.

\bibitem{BaEh18} A.~Bakhta and V.~Ehrlacher. Cross-diffusion systems with non-zero 
flux and moving boundary conditions.
{\em ESAIM Math. Model. Numer. Anal.} 52 (2018), 1385--1415.

\bibitem{BBEP20} J.~Berendsen, M.~Burger, V.~Ehrlacher, and J.-F.~Pietschmann.
Uniqueness of strong solutions and weak-strong stability in a system of 
cross-diffusion equations. {\em J. Evol. Eqs.} 20 (2020), 459--483.

\bibitem{Bot11} D.~Bothe. On the Maxwell--Stefan equations to multicomponent diffusion. 
In: J.~Escher et al. (eds.), 
{\em Parabolic Problems: The Hermann Amann Festschrift}, pp.~81--93.
Springer, Basel, 2011.

\bibitem{BoLa06} F.~Boyer and C.~Lapuerta. Study of a three component Cahn--Hilliard 
flow model. {\em ESAIM Math. Model. Numer. Anal.} 40 (2006), 653--687.

\bibitem{CaHi58} J.~Cahn and J.~Hilliard. Free energy of a nonuniform system. I. 
Interfacial free energy. {\em J. Chem. Phys.} 28 (1958), 258--267.

\bibitem{CMN19} C.~Canc\`es, D.~Matthes, and F.~Nabet. A two-phase two-fluxes
degenerate Cahn--Hilliard model as a constrained Wasserstein gradient flow.
{\em Arch. Ration. Mech. Anal.} 233 (2019), 837--866.

\bibitem{ChJu18} X.~Chen and A.~J\"ungel. A note on the uniqueness of weak solutions 
to a class of cross-diffusion systems. {\em J. Evol. Eqs.} 18 (2018), 805--820.

\bibitem{CGPS13} P.~Colli, G.~Gilardi, P.~Podio-Guidugli, and J.~Sprekels. 
Global existence and uniqueness for a singular/degenerate Cahn--Hilliard system 
with viscosity. {\em J. Diff. Eqs.} 254 (2013), 4217--4244.

\bibitem{DrJu12} M.~Dreher and A.~J\"ungel. Compact families of piecewise constant 
functions in $L^p(0,T;B)$. {\em Nonlin. Anal.} 75 (2012), 3072--3077.

\bibitem{EMP21} V.~Ehrlacher, G.~Marino, and J.-F.~Pietschmann. Existence of weak 
solutions to a cross-diffusion Cahn--Hilliard type system. 
{\em J. Diff. Eqs.} 286 (2021), 578--623.

\bibitem{ElGa96} C.~Elliott and H.~Garcke. On the Cahn--Hilliard equation with 
degenerate mobility. {\em SIAM J. Math. Anal.} 27 (1996), 404--423.

\bibitem{ElGa97} C.~Elliott and H.~Garcke. Diffusional phase transitions in 
multicomponent systems with a concentration dependent mobility matrix. 
{\em Physica D} 109 (1997), 242--256.

\bibitem{ElLu91} C.~Elliott and S.~Luckhaus. A generalised diffusion equation for 
phase separation of a multi-component mixture with interfacial free energy. 
IMA preprint series, 1991. \newline https://conservancy.umn.edu/handle/11299/1733.

\bibitem{F13}
J.~Fischer. Uniqueness of solutions of the Derrida--Lebowitz--Speer--Spohn equation 
and quantum drift-diffusion models. 
{\em Commun. Partial Differ. Eqs.} 38 (2013), 2004--2047.

\bibitem{GLT17} J.~Giesselmann, C.~Lattanzio, and A.~Tzavaras. Relative energy for the
Korteweg theory and related Hamiltonian flows in gas dynamics,
{\em Arch. Ration. Mech. Anal.} 223 (2017), 1427--1484.

\bibitem{GoMa98} V.~Giovangigli and M.~Massot. The local Cauchy problem for 
multicomponent reactive flows in full vibrational nonequilibrium. 
{\em Math. Meth. Appl. Sci.} 21 (1998), 1415--1439.

\bibitem{HMPW17} M.~Herberg, M.~Meyries, J.~Pr\"uss, and M.~Wilke. Reaction-diffusion 
systems of Maxwell--Stefan type with reversible mass-action kinetics. 
{\em Nonlin. Anal.} 159 (2017), 264--284.

\bibitem{HoBu21} K.~Hopf and M.~Burger. On multi-species diffusion with size exclusion.
Submitted for publication, 2021. arXiv:2110.06068.

\bibitem{HJT19}  X.~Huo, A.~J\"ungel, and A.~Tzavaras.  High-friction limits of Euler 
flows for multicomponent systems,  {\em Nonlinearity} 32 (2019), 2875--2913.

\bibitem{HJT21} X.~Huo, A.~J\"ungel, and A.~Tzavaras. Weak-strong uniqueness for 
Maxwell--Stefan systems. To appear in {\em SIAM J. Math. Anal.}, 2022.
arXiv:2110.05331.

\bibitem{HuSa18} H.~Hutridurga and F.~Salvarani. Existence and uniqueness analysis 
of a non-isothermal cross-diffusion system of Maxwell--Stefan type.
{\em Appl. Math. Lett.} 75 (2018), 108--113.

\bibitem{Jin92} Y.~Jingxue. On the existence of nonnegative continuous solutions
of the Cahn--Hilliard equation. {\em J. Diff. Eqs.} 97 (1992), 310--327.

\bibitem{Joh15} D.~John. On uniqueness of weak solutions for the thin-film equation. 
{\em J. Diff. Eqs.} 259 (2015), 4122--4171.

\bibitem{Jue16} A.~J\"ungel. {\em Entropy Methods for Diffusive Partial Differential
Equations}. Springer Briefs Math., Sprin\-ger, 2016.

\bibitem{JuSt13} A.~J\"ungel and I.~Stelzer. Existence analysis of Maxwell--Stefan 
systems for multicomponent mixtures. {\em SIAM J. Math. Anal.} 45 (2013), 2421--2440. 

\bibitem{KRS21} P.~Krej\v{c}\'{\i}, E.~Rocca, and J.~Sprekels. Analysis of a tumor 
model as a multicomponent deformable porous medium.{\em Interfaces Free Bound.} (2022), published online first.


\bibitem{MaZi17} D.~Matthes and J.~Zinsl. Existence of solutions for a class of 
fourth order cross-diffusion systems of gradient flow type. 
{\em Nonlin. Anal.} 159 (2017), 316--338.

\bibitem{Max66} C.~Maxwell. On the dynamical theory of gases. 
{\em Phil. Trans. R. Soc. London} 157 (1866), 49--88.

\bibitem{MiSc09} A.~Miranville and G.~Schimperna. Generalized Cahn--Hilliard equations 
for multicomponent alloys. {\em Adv. Math. Sci. Appl.} 19 (2009), 131--154.

\bibitem{Ste71} J.~Stefan. \"Uber das Gleichgewicht und Bewegung, insbesondere die 
Diffusion von Gasgemengen. {\em Sitzungsberichte Kaiserl. Akad. Wiss. Wien} 
63 (1871), 63--124.

\end{thebibliography}
\end{document}